\documentclass[12pt,reqno]{amsart}
\usepackage{amsmath, amsthm, amssymb}
\usepackage[T1]{fontenc}
\usepackage[OT2, T1]{fontenc}
\usepackage[russian,english]{babel}
\usepackage{geometry}
\usepackage[colorinlistoftodos]{todonotes}
\usepackage{cancel}
\usepackage{soul}
\usepackage{comment}
\usepackage{color}
\usepackage{amscd}
\usepackage{tabularx}
\usepackage{url}
\usepackage{eurosym}
\usepackage{braket}
\usepackage{hyperref}
\usepackage{amsrefs}
\usepackage{setspace}
\usepackage{verbatim}
\usepackage{mathrsfs}
\usepackage{dsfont}
\usepackage{amsbsy}
\usepackage{amstext}
\usepackage{amsthm}
\usepackage{amssymb}
\usepackage{xcolor}
\usepackage{cancel}
\usepackage{graphicx}
\usepackage[export]{adjustbox}

\usepackage{float}
\PassOptionsToPackage{normalem}{ulem}
\usepackage[toc,page]{appendix}
\usepackage{graphicx,tikz}
\usepackage{enumerate}
\usepackage{adjustbox,graphicx,subcaption}
\makeatletter
\usepackage{eurosym}
\usepackage{mathrsfs}
\usepackage{dsfont}
\usepackage{xcolor}
\usepackage{cancel}

\newtheorem{theorem}{Theorem}[section]
\newtheorem{lemma}[theorem]{Lemma}

\newtheorem{conjecture}[theorem]{Conjecture}
\newtheorem{question}[theorem]{Question}

\theoremstyle{definition}
\newtheorem{definition}[theorem]{Definition}
\newtheorem{example}[theorem]{Example}

\newtheorem{rem}[theorem]{Remark}

\numberwithin{equation}{section}

\newcommand{\mmod}[1]{\,\,\textnormal{mod}\,\,#1}

\def \dx {x}

\def \bB {\mathbb B}

\def \bD {\mathbb D}
\def \bE {\mathbb E}
\def \bF {\mathbf F}

\def \bG {\mathbb G}
\def \bH {\mathbb H}
\def \I {\mathbb I}
\def \bK {\mathbb K}
\def \bN {\mathbb N}
\def \bP {\mathbb P}

\def \bR {\mathbb R}

\def \bZ {\mathbb Z}

\def \bT {\mathbb T}
\def \bM {\mathbb M}

\def \ba {\mathbf a}
\def \bb {\mathbf b}

\def \bf {\mathbf f}	
\def \bF {\mathbf F}
\def \bg {\mathbf g}
\def \bh {\mathbf h}

\def \bj {\mathbf j}

\def \bm {\mathbf m}

\def \bp {\mathbf p}
\def \bq {\mathbf q}

\def \bs {\mathbf s}
\def \bt {\mathbf t}

\def \bv {\mathbf v}
\def \bx {\mathbf x}

\def \by {\mathbf y}
\def \bz {\mathbf z}

\def\bfa{{\mathbf a}}
\def\ba{{\mathbf a}}
\def\bfb{{\mathbf b}}

\def\bff{{\mathbf f}}
\def\bf{{\mathbf f}}
\def\bfg{{\mathbf g}}

\def\bfj{{\mathbf j}}

\def\bfv{{\mathbf v}}

\def\bfx{{\dx}}
\def\bx{{\mathbf x}}
\def\bfy{{\mathbf y}}

\def\bfF{{\mathbf F}}

\def \rd {\mathrm d}

\def\cA{{\mathcal A}}

\def\cG{{\mathcal G}}
\def\cH{{\mathcal H}}

\def \fB {\mathfrak B}
\def \fC {\mathfrak C}

\def \fS {\mathfrak S}

\def \cA {\mathcal A}
\def \cB {\mathcal B}
\def \cC {\mathcal C}
\def \cD {\mathcal D}
\def \cE {\mathcal E}

\def \cG {\mathcal G}
\def \cH {\mathcal H}
\def \cI {\mathcal I}
\def \cJ {\mathcal J}

\def \cL {\mathcal L}
\def \cM {\mathcal M}

\def \cP {\mathcal P}

\def \cR {\mathcal R}
\def \cS {\mathcal S}
\def \cT {\mathcal T}

\def \cW {\mathcal W}

\def \cY {\mathcal Y}

\def \bzero {\mathbf 0}

\def \balp {{\boldsymbol{\alp}}}
\def \bbet {{\boldsymbol{\beta}}}

\def \bdel {{\boldsymbol{\del}}}
\def \bxi {{\boldsymbol{\xi}}}

\def\N{{\mathbb N}}
\def\R{{\mathbb R}}
\def\Z{{\mathbb Z}}\def\Q{{\mathbb Q}}

\def\bump{{\textcolor{black} {w}}}
\def \supp {{\mathrm{supp}}}

\def \ds1 {\mathds{1}}

\def\alp{{\alpha}} 
\def\bet{{\beta}}  
\def\gam{{\gamma}}

\def\del{{\delta}} \def\Del{{\Delta}}

\def\kap{{\kappa}}
\def\lam{{\lambda}}

\def\sig{{\sigma}}

\def\ome{{\omega}} 
\def\d{{\partial}}
\def\eps{\varepsilon}

\def\le {\leqslant}
\def\leq {\leqslant}
\def\ge {\geqslant}
\def\geq {\geqslant}

\def\d{{\,{\rm d}}}

\def\d{{\mathrm{d}}}
\def \max {{\mathrm{max}}}

\def\wgood{{\Omega_{\Delta, K, \mathrm{g} }}}

\def\wgoodpi{h_{N,\nu}}
\def\wbad{{\Omega_{\Delta, K, \mathrm{sub} }}}

\DeclareMathOperator{\dist}{dist}

\newcounter{@ToDo}
\newcommand{\mtodo@helper}[1]{%
	({\color{blue}TODO~\arabic{@ToDo}: {#1\@addpunct{.}}})%
}

\newcommand{\mtodo}[1]{\stepcounter{@ToDo}%
	\relax\ifmmode\text{\mtodo@helper{#1}}%
	\else\mtodo@helper{#1}\fi%
}

\newcounter{@cdo}
\newcommand{\cdo@helper}[1]{%
	({\color{red}CITE~\arabic{@cdo}: {#1\@addpunct{.}}})%
}
\newcommand{\cdo}[1]{\stepcounter{@cdo}%
	\relax\ifmmode\text{\cdo@helper{#1}}%
	\else\cdo@helper{#1}\fi%
}

\def \dimh {{\mathrm{\dim_H}}}

\def \det {\mathrm{det}}

\def \supp {{\mathrm{supp}}}

\def \d {{\mathrm{d}}}
\def \dist {{\mathrm{dist}}}

\author{Sam Chow}
\author{Rajula Srivastava}
\author{Niclas Technau}
\author{Han Yu}

\begin{document}


\global\long\def\bA{\mathbf{A}}%

\global\long\def\bB{\mathbf{B}}%


\global\long\def\bD{\mathbf{D}}%

\global\long\def\bE{\mathbf{E}}%

\global\long\def\bF{\mathbf{F}}%

\global\long\def\bG{\mathbf{G}}%

\global\long\def\bH{\mathbf{H}}%

\global\long\def\bI{\mathbf{I}}%

\global\long\def\bJ{\mathbf{J}}%

\global\long\def\bK{\mathbf{K}}%

\global\long\def\bL{\mathbf{L}}%

\global\long\def\bM{\mathbf{M}}%


\global\long\def\bO{\mathbf{O}}%






\global\long\def\bU{\mathbf{U}}%

\global\long\def\bV{\mathbf{V}}%

\global\long\def\bW{\mathbf{W}}%

\global\long\def\bX{\mathbf{X}}%

\global\long\def\bY{\mathbf{Y}}%


\subjclass[2020]{11J83; 11K55; 11J25; 42B20}
\keywords{Rational points near manifolds, multiplicative Diophantine approximation on manifolds, quantitative non-divergence, oscillatory integrals, Hausdorff dimension, spectrum of exponents}

\address{Sam Chow;
\newline
Mathematics Institute, Zeeman Building, University of Warwick, Coventry CV4 7AL, United Kingdom}
\email{sam.chow@warwick.ac.uk}

\address{Rajula Srivastava; 
\newline 
Department of Mathematics, 
University of Wisconsin 480 Lincoln Drive, 
Madison, WI, 53706, USA}
\email{
rsrivastava9@wisc.edu}

\address{Niclas Technau; 
\newline 
Mathematical Institute, University of Bonn, Endenicher Allee 60,
53115, Bonn, Germany;
\newline Department of Mathematics, 
University of Wisconsin 480 Lincoln Drive, 
Madison, WI, 53706, USA}
\email{technau@wisc.edu}

\address{Han Yu;
\newline
Mathematics Institute, Zeeman Building, University of Warwick, Coventry CV4 7AL, United Kingdom;
\newline 
College of Mathematics and Statistics, Center of Mathematics, Chongqing University, Chongqing, 401331, China}
\email{han.yu.2@cqu.edu.cn}

\title[Multiplicative Diophantine approximation on Manifolds]
{Rational Points in Hyperbolic Regions and Multiplicative Diophantine Approximation 
on 
Manifolds
}
\date{\today}


\begin{abstract}
We establish the convergence theory of 
multiplicative Diophantine approximation for all non-degenerate, smooth manifolds. We also settle said convergence theory for all affine subspaces satisfying 
a highly generic and essentially optimal Diophantine condition. 
This answers a question of Beresnevich and Velani
from 2005, while simultaneously
sharpening results of Kleinbock and Margulis
on the strong extremality of non-degenerate manifolds, and of Kleinbock on the strong extremality of affine subspaces. 
\end{abstract}
\maketitle

\section{Introduction and Main Results}

\subsection{Introduction}

\subsubsection{Simultaneous and multiplicative approximation}

We investigate rational approximations to points in $\bR^n$, where $n \in \bN$ is fixed. It follows from Dirichlet's approximation theorem that if $\bx = (x_1,\ldots, x_n) \in \bR^n$ then there exist infinitely many $q \in \bN$ such that
\[
\max \{ \| q x_1 \|, \ldots, \| q x_n \| \} < q^{-1/n},
\]
where $\| \cdot \|$ denotes the distance to $\bZ$. This describes the rate at which $x_1, \ldots, x_n$ can be simultaneously approximated by rationals of the same denominator. By Khintchine's theorem \cites{Khi1924, Khi1926}, if $\psi: \bN \to [0,1)$ is monotonic and $\sum_{q=1}^\infty \psi(q)^n = \infty$ then, for almost all $\bx \in \bR^n$, there exist infinitely many $q \in \bN$ such that
\[
\max \{ \| q x_1 \|, \ldots, \| q x_n \| \} < \psi(q).
\]
This is the cornerstone of the metric theory of diophantine approximation. The result is sharp, for if the series converges then the set of such $\bx$ has measure zero.

We see from the discussion above that if $\bx \in \bR^n$ then there exist infinitely many $q \in \bN$ such that
\[
\| q x_1 \| \cdots \| q x_n \| < 1/q.
\]
This describes the multiplicative rate at which $x_1, \ldots, x_n$ can be approximated by rationals of the same denominator. Famously, Littlewood's conjecture asserts that if $n \ge 2$ then $1/q$ can be replaced by $c/q$ here, for any $c > 0$. Gallagher's theorem \cite{Gal1962} asserts that if $\psi: \bN \to [0,1)$ is monotonic then almost no/every $\bx \in \bR^n$ admits infinitely many $q \in \bN$ such that
\[
\| q x_1 \| \cdots \| q x_n \| < \psi(q)
\]
if the series \begin{equation}
\label{eq: series of measures}
\sum_{q\geq 1}\psi(q)(\log q)^{n-1}
\end{equation} converges/diverges.

\subsubsection{Diophantine approximation on manifolds}

Let $\cM$ be an analytic submanifold of $\bR^n$. This comes with an induced Lebesgue measure, and one can ask for analogues of the aforementioned theorems of Khintchine and Gallagher. This is an old problem which appears to have had its genesis in a 1932 paper by Mahler \cite{Mah1932}, in which metric diophantine approximation on the Veronese curve $\{ (t, t^2, \ldots, t^n) \}$ is discussed.

There is often a difference in behaviour according to whether $\cM$ is flat or curved. To be precise, we say that $\cM$ is \emph{non-degenerate} if it is not contained in any proper affine subspace of $\bR^n$. The definition can be extended to sufficiently smooth manifolds that are not necessarily analytic, see \cite{KM1998} and Definition \ref{def nondeg} below.

The Khintchine divergence theory was resolved for $C^3$, non-degenerate planar curves in \cite{BDV2007}, and for all analytic, non-degenerate manifolds in \cite{Ber2012}. The assumption of analyticity was removed for curves in \cite{BVVZ2021}, and recently for all non-degenerate manifolds in \cite{BD}. 

The corresponding convergence theory was solved for $C^3$, non-degenerate planar curves in \cite{VV2006}. It was recently settled for all non-degenerate manifolds by Beresnevich and Yang \cite{BY2023}, making essential use of the quantitative non-divergence estimates developed in \cite{KM1998} and \cite{BKM2001}.

In the complementary setting of affine subspaces, the convergence theory requires a diophantine assumption on the parametrising matrix. See the recent article of Huang \cite{Hua2024}, which built on the work of Kleinbock \cite{Kle2003} and Huang--Liu \cite{HL2021}.

Naturally, diophantine approximation on manifolds is closely related to the density of rational points near manifolds. The latter also has a long and distinguished history, and continues to be vigorously investigated \cites{BZ2010, Cho2017, Gaf2014, Hua2015, Hua2019, Hua2020, SST, SY2022, sri2025counting, ST}.


\subsection{Main results}

The analogue of Gallagher's theorem has been much more elusive. Badziahin and Levesley \cite{BL2007} settled the convergence theory for non-degenerate planar curves. In \cite[Problem S2]{BV2007}, released in 2005, Beresnevich and Velani posed the problem of generalising this. In the flat setting there has been substantial progress, for fibres \cites{BHV2020, Cho2018, CT2019, CT2024, Yu2023}, planar lines \cites{CY2024, Hua2024}, and affine hyperplanes \cite{CY1}. For hypersurfaces with $n \ge 5$ and non-vanishing Gaussian curvature, the first and fourth named authors recently established the Gallagher property in full \cite{CY1}. 
 
\begin{definition}
We say
$\bx\in\R^{n}$ is 
$\psi$-multiplicatively approximable if 
\begin{equation}
\label{def mult app cond}
\prod_{i\leq n}\Vert q\dx_{i}\Vert<\psi(q)
\end{equation}
holds for infinitely many $q \in \bN$. 
Let $\cW^{\times}_n(\psi)$ be
the set of $\psi$-approximable vectors 
in $\R^{n}$.
\end{definition}

\begin{definition}
A Borel measure $\mu$ on $\bR^n$ is \emph{convergent Gallagher} if, for any non-increasing function $\psi:\bN \rightarrow [0,1)$,
\[
\sum_{q=1}^\infty \psi(q) 
(\log q)^{n-1} < \infty
\quad
\Longrightarrow 
\quad
\mu(\cW_n^\times(\psi)) = 0.
\]
\end{definition}

In this paper, we are interested in the set of $\psi$-approximable points on a smooth, flat or curved (non-degenerate) submanifold of $\mathbb{R}^n$. 
For an integer $1\leq d\leq n$, consider 
\begin{equation}\label{def U}
V\subseteq U\subset \mathbb{R}^d
\end{equation}
so that $V$ is closed set and $U$ a bounded, open set.
Our primary object of interest is the manifold
$$\cM =\{\mathbf{f}(\bx): \bx\in V
\},$$ 
embedded by a sufficiently smooth function
$
\mathbf{f} = (f_1, \ldots, f_n): U\to \mathbb{R}^n.
$
Note that we allow $f_1,\dots,f_n$ to be linear. In this `flat' case, we denote the manifold by $\cL$. Let us now describe the `curved' setting precisely.

\begin{definition}
\label{def nondeg}
Let $1\leq l\leq n-d$ be an integer. We say that $\bx$ 
is an $l$-non-degenerate
point of $\cM$ if the partial derivatives of 
$\mathbf{f}$ at $\bx$ of order in $[1,l]$ span $\mathbb{R}^n$. We say that $\mathbf{f}(\bx)$ is non-degenerate if it is $l$-non-degenerate for some $l\leq n-d$.
\end{definition}

\begin{rem}
The non-degeneracy condition is an infinitesimal version of $\cM$ not lying in any proper
affine hyperplane, i.e. the linear independence of $1, f_1, \ldots, f_n$ over $\mathbb{R}$. For example,
if the functions $f_1, \ldots, f_n$ are analytic --- in other words, the manifold $\cM$ is analytic --- then the linear independence of $1, f_1, \ldots, f_n$ over $\mathbb{R}$
in the domain $U$ is equivalent to all points of $\cM$ being
non-degenerate.
\end{rem}

We establish the following results.

\begin{theorem}
\label{thm: Gallagher conv}
Let $\cM\subseteq\R^{n}$ be a smooth, non-degenerate manifold of dimension $d\geq 1$. Then the natural Lebesgue measure on $\cM$ is convergent Gallagher.
\end{theorem}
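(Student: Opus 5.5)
Our plan is to reduce Theorem \ref{thm: Gallagher conv} to a counting problem for rational points near $\cM$ in hyperbolic regions, and to handle major and minor arcs separately.

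\textbf{Reductions.} Since every point of $\cM$ is non-degenerate, it suffices to work locally. We may restrict to a small ball where $\bf(\bx)=(\bx,\mathbf g(\bx))$ is in Monge form and the partial derivatives of order at most $l$ span $\bR^n$ uniformly. By the Cauchy condensation argument standard for monotone $\psi$, it suffices to treat dyadic blocks $q\sim Q=2^k$. Define
\[
A_Q=\Bigl\{\bx\in V:\ \exists\, q\sim Q,\ \prod_{i\le n}\|qf_i(\bx)\|<\psi(Q)\Bigr\}.
\]
Borel--Cantelli then reduces the theorem to showing that $\sum_k \mu(A_{2^k})<\infty$, given $\sum_k 2^k\psi(2^k)k^{n-1}<\infty$. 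We may also assume $\psi(q)\ge q^{-1}(\log q)^{-n-1}$ by replacing $\psi$ with a larger admissible function. Points with some $\|qf_i\|<Q^{-1-\eps}$ are discarded separately, using the known Khintchine-type convergence results.

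\textbf{Dyadic decomposition.} Split the hyperbolic region $\prod\|qf_i\|<\psi$ into $O((\log Q)^{n-1})$ boxes $\|qf_i(\bx)\|\le\delta_i$ with $\prod\delta_i\asymp\psi(Q)$. For each box, we bound the measure of the set of $\bx$ with some $q\sim Q$ satisfying these box conditions. The aim is the estimate $\ll Q\prod\delta_i + E(Q,\bdel)$, where the error terms $E$ are summable over boxes and over $k$.

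\textbf{Major arcs.} Boxes that are not too lopsided, with all $\delta_i\ge Q^{-\eta}$ for a small $\eta>0$, are the main case. Here we smooth the box indicator and expand in Fourier series. Stationary phase for the oscillatory integrals
\[
\int w(\bx)\,e\bigl(q\,\mathbf j\cdot\bf(\bx)\bigr)\,\d\bx
\]
gives decay $\ll (Q|\mathbf j|)^{-\kappa}$ for some $\kappa>0$ depending on $l$. This works by van der Corput's lemma using $l$-non-degeneracy, applied directly even without curvature. The zero frequency yields the main term $Q\prod\delta_i$. The dual sum over $\mathbf j$ ranges over $|j_i|\lesssim\delta_i^{-1}$, and the power saving should beat the $Q^{O(\eta)}$ loss.

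\textbf{Lopsided boxes.} The remaining boxes have some $\delta_i$ very small, say $\delta_i<Q^{-\eta}$. Here we use the quantitative non-divergence of Kleinbock--Margulis and Bernik--Kleinbock--Margulis (\cite{KM1998}, \cite{BKM2001}), as in Beresnevich--Yang \cite{BY2023}. The condition says that the lattice $g_{\mathbf t}u_{\bx}\bZ^{n+1}$ contains a short vector, with the diagonal flow $g_{\mathbf t}$ determined by $(Q,\bdel)$. The $(C,\alpha)$-good property of non-degenerate $\bf$, together with the lower bound on the skew gradients, yields measure bounds $\ll (\text{shortness})^{\alpha}$ off a set where a primitive subgroup is small. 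This must be done uniformly over multiplicative weights $\mathbf t$, in the manner of Kleinbock--Margulis' proof of strong extremality. It should give a power saving $Q^{-c\eta}$ over $Q\prod\delta_i$, and this power saving absorbs the logarithmic number of boxes.

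\textbf{Main obstacle.} We expect the hardest part to be the intermediate regime. There the Fourier method loses powers of $\delta_i^{-1}$ from the dual sum, while non-divergence gives only a saving in the very lopsided range. The two ranges must overlap, and the non-divergence bound must be refined to a sharp count: a main term $Q\prod\delta_i$ with no extra $\log$ factors beyond $(\log Q)^{n-1}$. I would first try to run the Beresnevich--Yang "generic vs. special" dichotomy with weighted flows. I would aim for the bound $\mu \ll Q\prod\delta_i + Q^{-c}$ uniformly over all boxes with $\prod\delta_i\asymp\psi(Q)$. Summing over the $(\log Q)^{n-1}$ boxes and over dyadic $Q$ then gives exactly the convergent Gallagher series.
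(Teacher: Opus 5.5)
There is a genuine gap. Your case split does not work as stated, and the regime that ends up carrying the whole theorem has no mechanism. Since $\psi$ is non-increasing with $\sum_q\psi(q)(\log q)^{n-1}<\infty$, we have $Q\psi(Q)\to0$. So every box satisfies $\prod_i\delta_i\ll\psi(Q)=o(Q^{-1})$, whence $\min_i\delta_i<Q^{-1/n}$ for large $Q$. For small $\eta$ your major arcs (all $\delta_i\ge Q^{-\eta}$) are therefore empty, and the only situation where your Fourier argument is self-consistent, a dual range of size $Q^{O(\eta)}$, never arises. In reality the frequencies $|j_i|\lesssim\delta_i^{-1}$ number about $\prod_i\delta_i^{-1}\ge Q$. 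Van der Corput only gives a fixed small power $(Q|\mathbf j|)^{-\kappa}$ (e.g.\ $\kappa=1/n$ for a non-degenerate curve in $\mathbb R^n$), which cannot pay for that many frequencies; this is why purely Fourier-analytic methods need strong curvature. So every box is ``lopsided'', and there the asserted saving $Q^{-c\eta}$ relative to $Q\prod_i\delta_i$ is not what quantitative non-divergence delivers. It bounds the measure of the set where $g_{\mathbf t}u_{\mathbf x}\mathbb Z^{n+1}$ has a vector of length $<\epsilon$ by a small power $\epsilon^{\alpha}$. Here $\epsilon\approx(Q\prod_i\delta_i)^{1/(n+1)}$ is only logarithmically small when $\psi$ is barely in the convergence class, so the bound is neither comparable to $Q\prod_i\delta_i$ nor summable over the $(\log Q)^{n-1}$ boxes and dyadic $Q$. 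Your final paragraph names this obstacle but does not overcome it.

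The missing idea is to make the dichotomy pointwise in the parameter for each fixed box, and to use non-divergence only to produce an exceptional set of \emph{absolutely} small measure, never as a counting tool. The paper fibres $\mathcal M$ into non-degenerate curves (Appendix~\ref{sec fibering}). For each $\boldsymbol\delta$ it parametrises the curve by the coordinate with the smallest $\delta_i$, which is possible off a small set by Lemma~\ref{lem red model setting}. It then removes the sub-level set of those $x$ for which some integer $\mathbf j$ with $|j_i|<T_i\asymp Q^{\eta}/\delta_i$ has both $\|\Phi_{\mathbf j}(x)\|<\Delta=Q^{10\eta-1}$ and $|\Phi'_{\mathbf j}(x)|<K=Q^{-4\nu}\delta_{\min}^{-1}$. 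Because $K$ is shaved by $Q^{-4\nu}$ and one may assume $\psi(q)\ge q^{-1-\theta}$ with $\theta<\nu$, the quantity $\Delta K\prod_iT_i/\max_iT_i$ in Theorem~\ref{thm quant non div} is $\ll Q^{-\nu}$. So this set has measure $\ll Q^{-\nu\alpha}$, which is summable over $t$ and the $O(t^{n-1})$ boxes with no comparison to $Q\prod_i\delta_i$ needed. On the complement, smoothly cut off at scale $r=Q^{4\eta-1}/K$, Poisson summation in $a$ and $q$ produces oscillatory integrals that are all non-stationary: in $x$ where $|\Phi'_{\mathbf j}|\ge K$, and in $y$ where $\|\Phi_{\mathbf j}\|\ge\Delta$. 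Repeated integration by parts gives decay $Q^{-N\eta}$ for any $N$, which beats the polynomially many frequencies. This yields the sharp count $\ll Q^2\prod_{i\ne1}\delta_i$ of rational points near the good part, hence measure $\ll Q\prod_i\delta_i$; no stationary phase or van der Corput decay is needed. Boxes with some $\delta_i\asymp1$ are handled separately, by induction on $n$ starting from the Badziahin--Levesley planar case.
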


\begin{rem}
We establish this indirectly. We use a fibring trick (Appendix~\ref{sec fibering}) to reduce the theorem to that concerning smooth, non-degenerate curves (Theorem \ref{thm: main curves}). More precisely, we will prove a fibring lemma (Lemma \ref{lem fiber}) for \textbf{sufficiently smooth,} non-degenerate manifolds which shows that any $d$-dimensional smooth, non-degenerate manifold $\cM$ can be decomposed into smooth, non-degenerate curves. Such a result has been well known and often used for \textbf{analytic} manifolds and curves, see \cites{Ber2015, sprindzhuk1980achievements}. 

\end{rem}

\begin{theorem}
\label{FlatConv}
Let $1\leq d < n$ be positive integers.
Let $\cL$ be a $d$-dimensional affine subspace of $\bR^n$ satisfying Hypothesis D.
Then the induced Lebesgue measure on $\cL$ is convergent Gallagher.
\end{theorem}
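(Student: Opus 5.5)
We will prove Theorem~\ref{FlatConv} by a dyadic decomposition combined with a counting estimate for rational points in hyperbolic regions near $\cL$. First, parametrise $\cL$ as $\{(\bx, \bx A + \ba_0): \bx \in V\}$ with $\bx\in\R^d$, $A$ a $d\times(n-d)$ real matrix and $\ba_0\in\R^{n-d}$. Hypothesis D is a Diophantine condition on $(A,\ba_0)$, and we use it only through a lower bound on the distance from integer combinations to the subspace. By the Cauchy condensation argument familiar from Gallagher's theorem, it suffices to treat $\psi(q)$ constant on dyadic blocks $q\sim Q=2^k$. We write the condition $\prod_i\|qx_i\|<\psi(q)$ as a union over dyadic boxes: $\|qx_i\|\asymp \del_i$ with $\prod \del_i \asymp \psi(Q)$ and each $\del_i\ge \psi(Q)/Q^{C}$ (the very small $\del_i$ contribute negligibly). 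There are $\ll (\log Q)^{n-1}$ such boxes. For each box $\bdel$, Borel--Cantelli reduces the problem to bounding the measure of
\[
\{\bx\in V : \exists\, q\sim Q,\ \|q f_i(\bx)\|\le \del_i \ (1\le i\le n)\}.
\]
We aim for the bound $\ll Q\,\del_1\cdots\del_n + E(Q,\bdel)$, where the error $E$ is summable after being multiplied by $(\log Q)^{n-1}$ and summed over boxes.

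To bound this measure, we count rational points $\bp/q$ near $\cL$ in anisotropic boxes. The count is at most the measure of $\bx$ times roughly $Q\prod_{i\le d}\del_i$ (the number of choices of $q$ and $p_1,\dots,p_d$), while the remaining $n-d$ coordinates impose $n-d$ constraints. We detect these constraints with a smooth weight and Poisson summation, or equivalently with a Fourier expansion in the last $n-d$ coordinates. The zero frequency gives the main term $Q\prod_i \del_i$. The nonzero frequencies $\bj\in\Z^{n-d}$ with $|j_i|\lesssim \del_{d+i}^{-1}$ give sums
\[
\sum_{q\sim Q}\ \sum_{\bp'}\ e\big(q(\bj\cdot \ba_0)+ \bp'\cdot(\ldots)\big),
\]
which, after summing over $\bx$-integration, collapse to conditions of the form $\|q\,{}^t\!(A\bj)\|_{i}\lesssim \del_i$ for $i\le d$.

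The main obstacle is controlling these nonzero-frequency terms uniformly over the highly anisotropic boxes. The hyperbolic shape means some $\del_i$ may be as large as $1$ while others are tiny, so a single isotropic Diophantine exponent is wasteful. My plan is to invoke Hypothesis D in the form $\max_{i\le d}\|(A\bj)_i\| \gg |\bj|^{-\omega}$, with $\omega$ below the critical exponent. Applying a Minkowski/transference (geometry of numbers) argument to the lattice generated by $(q,\, qA\bj)$ bounds the number of $q\sim Q$ for which all $\|q(A\bj)_i\|\le\del_i$ hold. When this number is small relative to the main term, the nonzero frequencies contribute $E\ll Q^{1-\eta}\prod\del_i + Q^{-\eta}$ for some $\eta>0$.

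Finally, we sum over the $\ll(\log Q)^{n-1}$ boxes and over dyadic $Q$. The main terms give $\sum_k 2^k\psi(2^k)\,k^{n-1}\asymp\sum_q\psi(q)(\log q)^{n-1}<\infty$, and the error terms are summable by the power savings. Borel--Cantelli then shows that $\cW_n^\times(\psi)\cap\cL$ is null. If the uniform saving fails in extreme boxes, where some $\del_i$ is close to $1$, the fallback is to drop those coordinates. This reduces the problem to a lower-dimensional multiplicative problem on the projected affine subspace, which we handle by induction on $n$, since Hypothesis D should be inherited by coordinate projections.
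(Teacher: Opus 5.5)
Your skeleton matches the paper: dyadic blocks in $q$, $O((\log Q)^{n-1})$ axis-aligned boxes of volume $\asymp\psi(Q)$ covering the hyperbolic region, a union bound over $q$, a Fourier expansion in the last $n-d$ coordinates whose zero mode gives $Q\prod_i\del_i$, and Borel--Cantelli. The gap is in the step you yourself call the main obstacle; the mechanism you propose there does not work under Hypothesis D.

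The bound you plan to invoke, $\max_{i\le d}\|(A\bj)_i\|\gg|\bj|^{-\omega}$, is not implied by Hypothesis D, for two reasons.
\begin{enumerate}
\item It discards the inhomogeneous column. Hypothesis D only bounds $\max\{\|\bxi\cdot\ba^{(1)}\|,\ldots,\|\bxi\cdot\ba^{(d)}\|,\|\bxi\cdot\bb\|\}$ from below, so the cancellation in the sum over the denominator $q$ (a factor $\min\{1,(Q\|\bxi\cdot\bb\|)^{-1}\}$) is indispensable. For instance, the line $y=x+b$ with $b$ badly approximable satisfies Hypothesis D with $\gam=1/2$, yet $\|j\cdot 1\|=0$ for every $j$.
\item It is isotropic in $\bj$, which you yourself note is wasteful. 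For $d=1$, Dirichlet's theorem gives $\|\bj\cdot\ba^{(1)}\|<|\bj|^{-(n-1)}$ infinitely often. So for $n\ge 4$ such a bound cannot certify separation at scale $Q^{-1}$ even for frequency boxes with one side $\approx Q^{1/2}$ and the others $Q^{o(1)}$. These come from target boxes with two sides $\approx Q^{-1/2}$ and the rest $\approx1$.
\end{enumerate}
This is why Hypothesis D is a two-parameter condition ($|\bxi|\le T^{\sig_2}$, $H(\bxi)\le T$, exponent $\sig_1=\gam(1+d\sig_2)$), with $\sig_2$ read off from each box via $r_{d+1}=(r_{d+1}\cdots r_n)^{\sig_2}$.

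A ``Minkowski/transference count'' of $q$ with $\|q(A\bj)_i\|\le\del_i$ is also not what arises. The paper counts the starting points $A\bm+q\bb$ of the unit pieces of $\{(q\bt,q(A\bt+\bb))\}$ that land in the projection of the box. These are points of $\{q_1\ba^{(1)}+\cdots+q_{d+1}\ba^{(d+1)}\}$ in an axis-aligned box $\cS\subset\bT^{n-d}$. The Fourier coefficients of this point set factor as $\prod_{j\le d+1}\min\{1,(Q\|\bxi\cdot\ba^{(j)}\|)^{-1}\}$. Hypothesis D, applied with $T=\prod_{j>d}Q^{\eps/n}/r_j$, then gives a gap principle: the points $(\bxi\cdot\ba^{(1)},\ldots,\bxi\cdot\ba^{(d+1)})\in\bT^{d+1}$ are $\gg Q^{\eps d/n-1}$-separated. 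Hence the nonzero frequencies contribute $O(1)$ times the main term.

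The second missing ingredient is the choice of parametrisation. You fix the first $d$ coordinates as parameters and take frequencies $|j_i|\lesssim\del_{d+i}^{-1}$. For a box whose small sides lie among the last $n-d$ coordinates, the frequencies then reach multiplicative height $T\approx(\del_{d+1}\cdots\del_n)^{-1}$, which can approach $1/\psi(Q)\approx Q$. Separation at scale $Q^{-1}$ at height $T$ needs $T^{\sig_1}\le Q^{1-\eps}$, but Hypothesis D always has $\sig_1=\gam(1+d\sig_2)\ge\gam\, n/(n-d)\ge1$. For $\gam$ near $1$ this already fails for boxes with no side near $1$ (e.g.\ $n=2$, $\del_1=Q^{-1/10}$, $\del_2=Q^{-9/10}$), so your inductive fallback does not rescue it. Exploiting the decay in the parametrising coordinates, which your sketch does not do, would in effect mean switching parametrisations anyway.

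The paper instead uses, for each box, the Monge parametrisation in which the $d$ smallest sides are the parametrising coordinates; this is exactly why Hypothesis D is imposed in every ordering. The projection of the box along $\cL$ then lies in an axis-aligned $\cS$ with sides $\asymp_A r_j$ ($j>d$) and volume $\gg(r_1\cdots r_n)^{1/(1+d\sig_2)}$. After replacing $\psi(q)$ by $\psi(q)+q^{(\eps-1)/\gam}$, this volume is $\gg Q^{\eps'-1/\sig_1}$ for some $\eps'>0$, which is precisely the range where the gap principle applies. With these two ingredients every box is handled uniformly, and no induction on $n$ is needed.
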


\begin{rem}
Hypothesis D is a diophantine condition on the defining parameters of $\cL$ that will be described in \S\ref{FlatSection}. We will see that it is almost sharp, and that it is generic.
\end{rem}

\subsection{Further remarks and open problems}
Theorems \ref{thm: Gallagher conv} and \ref{FlatConv} are already new, for example, for curves/lines in $\bR^3.$ While these results account for most of the manifolds, there are still some cases that remain unexplored.

\begin{conjecture}\label{conj: rel non-deg}
Let $\cL\subset\mathbb{R}^n$ be an affine subspace satisfying Hypothesis D. Let $\cM\subset\cL$ be a smooth manifold which is non-degenerate when viewed as a manifold in $\mathbb{R}^{\dim \cL}\simeq \cL.$ Then the induced Lebesgue measure on $\cM$ is convergent Gallagher.
\end{conjecture}
\begin{rem}
This is closely related to an `inheritance' property discussed in \cite{Kle2003}, where it was shown that analytic manifolds inherit the extremality from their supporting affine subspaces, i.e. the smallest containing affine subspace. In the case $\cL = \bR^n$, the conjecture specialises to Theorem \ref{thm: Gallagher conv}.
\end{rem}

Another curious problem concerns the monotonicity condition on the approximation function $\psi$. Notice that the convergent Gallagher property is satisfied by the ambient Lebesgue measure, i.e. on $\mathbb{R}^n$, without the monotonicity condition. It is unclear whether the monotonicity condition is necessary in the convergence theory of manifolds. 
\begin{question}
In Theorems \ref{thm: Gallagher conv} and \ref{FlatConv}, is it possible to drop the monotonicity condition on $\psi$?
\end{question}

\begin{rem}
For simultaneous approximation, \cite[Theorem 2.3]{Hua2024} provides a sufficient condition for the non-monotonic convergence property for affine subspaces. In our multiplicative context, it is likely that a non-monotonic version of flat Theorem \ref{FlatConv} can be supplied. However, the non-monotonic analogue of the curved Theorem \ref{thm: Gallagher conv} is largely open. There are some existing results along these lines, e.g. \cites{BVVZ2017, CY1, HuangPara}.
However, we do not know of any example of a non-degenerate manifold which fails the non-monotonic convergence theory, either simultaneous or multiplicative.
\end{rem}

Finally, we note that our method is robust enough to handle variations of the approximation problem, e.g. weighted approximation and, in some cases, inhomogeneous approximation. We choose not to pursue this level of generality in this paper.

\begin{rem}

While finishing this paper, we became aware of related work of Beresnevich, Datta, 
and Yang \cite{BDY2026} on weighted and multiplicative Diophantine approximation  for non-degenerate manifolds.
Their approach uses dynamical methods and the geometry of numbers, building upon  \cite{Ber2012,BY2023}. 
\end{rem}

\subsection{Organisation} 

We discuss some main ideas of the proofs in \S \ref{sec: outline}. After that, we dive into the full proofs.

\bigskip

For Theorem \ref{FlatConv}, we require the notion of Hypothesis D. The hypothesis, and its important properties, are provided in \S \ref{FlatSection}. The rest of \S \ref{FlatSection} contains a proof of Theorem \ref{FlatConv}.

\bigskip

For Theorem \ref{thm: Gallagher conv}, we need the convergent Gallagher property for smooth, non-degenerate curves in $\mathbb{R}^n$. It suffices to establish this for a curve $\cC$ parametrised over a bounded, open set in $\mathbb{R}$. More precisely, let $\I\subset \mathbb{R}$ be a closed interval contained in the open interval $U\subset \mathbb{R}$. Consider the curve 
\begin{equation}
\label{eq: curve}
\cC = \{\mathbf{f}(x): x \in \I\subset U\}, 
\end{equation}
where 
$
\mathbf{f} =(f_1, \ldots, f_n): U\to \mathbb{R}^n
$
is smooth and non-degenerate. In \S \ref{sec: proof of curves}, we prove the following theorem.
\begin{theorem}
\label{thm: main curves} 
If  the curve $\cC$ from 
\eqref{eq: curve} is non-degenerate, then the natural Lebesgue measure on $\cC$ is convergent Gallagher.
\end{theorem}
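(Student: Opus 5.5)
The plan is a first-moment (Borel--Cantelli) argument built on a counting estimate for rational points near the curve in hyperbolic regions.

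\textbf{Reductions.} By monotonicity of $\psi$ and a Cauchy condensation argument, it suffices to work dyadically. For $q \asymp Q = 2^k$, it is enough to show that the measure of
\[
E_Q=\Bigl\{x\in\I:\ \prod_{i\le n}\|qf_i(x)\|<\psi(Q/2)\ \text{for some } q\asymp Q\Bigr\}
\]
is bounded by $\ll \psi(Q/2)\,Q\,(\log Q)^{n-1}$ plus a summable error. Borel--Cantelli then finishes the proof, since $\sum_k \psi(2^k)2^k k^{n-1}<\infty$. Non-degeneracy is local, so by compactness of $\I$ we may work on a short interval around an $l$-non-degenerate point.

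\textbf{Decomposing the hyperbolic region.} We cover the hyperbolic region $\{\by:\prod|y_i|<\delta\}$ in $[-1/2,1/2]^n$ by $\ll(\log Q)^{n-1}$ dyadic boxes $\prod_i[-\delta_i,\delta_i]$ with $\prod\delta_i\asymp\delta$. Here $\delta=\psi(Q)$, and we may assume $\delta_i\ge Q^{-C}$ after discarding a negligible portion, using monotonicity and $\psi(q)\ge q^{-1-\epsilon}$ wlog. For each box we count
\[
N(Q,\boldsymbol\delta)=\#\{(q,\bp,x\text{-interval}): |qf_i(x)-p_i|\le\delta_i\},
\]
or more precisely the measure of the set of $x$ for which such $q,\bp$ exist. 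The aim is the bound
\[
\mathrm{meas}\ll Q\prod_i\delta_i+\text{error}.
\]

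\textbf{Main term and minor arcs.} We smooth the indicator of each box by a bump function and expand in Fourier series. The zero frequency gives the main term $\asymp Q\delta$. The nonzero frequencies $\bj$ with $|j_i|\lesssim\delta_i^{-1}$ give the oscillatory integrals
\[
\sum_{q\asymp Q}\int w(x)\,e\bigl(q\,\bj\cdot\bff(x)\bigr)\,\d x .
\]
Non-degeneracy provides a van der Corput estimate of size $(Q|\bj|)^{-1/l}$. This suffices only when the box is not too lopsided, namely when all $\delta_i$ are at least $Q^{-1/n+\eta}$ or so.

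\textbf{Lopsided boxes (the main obstacle).} The hard case is a box in which some $\delta_i$ are very small, say $\delta_1\approx Q^{-1}$, while others are of size about $1$. Fourier analysis loses there, and we instead apply quantitative non-divergence (Kleinbock--Margulis, Bernik--Kleinbock--Margulis) to the lattice
\[
g_{\boldsymbol t}u_{x}\bZ^{n+1},
\]
where $u_x$ is built from $\bff(x)$ and its derivative, and $g_{\boldsymbol t}$ is a diagonal flow with exponents $\delta_i$ and $Q$. This is in the style of Beresnevich--Yang. The set of $x$ with a short vector has measure at most $\ll(\text{stuff})^{\alpha}$, where the exponent $\alpha>0$ depends only on $l$ and $n$. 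The difficulty is that these estimates save only a power $\alpha$, whereas we need the exact product $\delta$ with only a logarithmic loss.

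To overcome this, the plan is to use a dichotomy. In the ``generic'' case the lattice is well-rounded, and counting is sharp via Minkowski/Davenport-type arguments on the local box. The exceptional case, where some $x$ admits a short vector for a neighbouring diagonal flow, has measure $\ll Q^{-\eta}$, which is summable over dyadic $Q$ and over the $(\log Q)^{n-1}$ boxes.

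Interpolating between the Fourier regime and the non-divergence regime across all box shapes gives
\[
\mathrm{meas}(E_Q)\ll\psi(Q)\,Q\,(\log Q)^{n-1}+Q^{-\eta},
\]
which concludes the proof. I expect the hardest step to be making the non-divergence bound sharp in lopsided boxes, where only a logarithmic loss is allowed.
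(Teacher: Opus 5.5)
\textbf{There is a genuine gap.} Your global architecture matches the paper's: dyadic $Q$, $\ll(\log Q)^{n-1}$ boxes with $\prod_i\delta_i\asymp\psi(Q)$, the first Borel--Cantelli lemma, and an exceptional set of power-saving measure from quantitative non-divergence. But the decisive step is asserted rather than proved: a count with only an $O(1)$ loss outside a set of summable measure, uniformly over all box shapes. Neither mechanism you offer supplies it.

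Your ``Fourier regime'' is empty. Monotonicity and $\sum_q\psi(q)(\log q)^{n-1}<\infty$ force $\prod_i\delta_i\asymp\psi(Q)=o(Q^{-1})$, so one never has all $\delta_i\ge Q^{-1/n+\eta}$. In any case, a van der Corput bound $(Q|\bj|)^{-1/l}$ with $l\ge n$ is far too weak to absorb the $\asymp\psi(Q)^{-1}\gg Q$ relevant frequencies. So every box is ``lopsided''.

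In that regime, the step ``well-rounded lattice, then Minkowski/Davenport'' is not an argument. Non-divergence only certifies $\lambda_1(g_{\bt}u_x\bZ^{n+1})\gg Q^{-\eta}$ off a summable set; a fixed threshold $\epsilon$ leaves measure $\ll\epsilon^{\alpha}$ per box, which does not decay in $t$. Lattice-point bounds via successive minima then lose powers of $1/\lambda_1$, i.e.\ $Q^{O(\eta)}$. This is fatal, since the convergence hypothesis leaves room only for the $(\log Q)^{n-1}$ factor. Note that the non-divergence bound itself never needs to be sharp; sharpness must come from the count on the complement, and that is exactly what is missing. Making a geometry-of-numbers route work needs substantial further ideas of Beresnevich--Yang type, and you leave this step open yourself.

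The paper gets the sharp count from non-stationary phase, which turns a mere $Q^{\eta}$ margin into arbitrary polynomial decay.
\begin{enumerate}
\item After discarding the small set where some $|f_i'|<\rho$ (Lemma~\ref{lem red model setting}), it reparametrises so that $\delta_{\min}$ lies along the parameter $x$. Each rational point then contributes measure $\ll\delta_{\min}/Q$, so it suffices to show that the number of $(q,a)\in\bZ^2$ with $\|qF_i(a/q)\|<\delta_i$ ($2\le i\le n$) is $\ll Q^2\bdel^\times$.
\item Fourier expansion with frequencies $|j_i|\le Q^\eta/\delta_i$, followed by Poisson summation in $(a,q)$, gives oscillatory integrals with phase $yQ[\Phi_{\bj}(x)-c]$.
\item The discarded set is the Bernik--Kleinbock--Margulis sub-level set $\fS_{\Delta,K}$, with anisotropic heights $T_i\asymp Q^\eta/\delta_i$ matched to this frequency box, $\Delta=Q^{10\eta-1}$ and $K=Q^{-4\nu}\delta_{\min}^{-1}$. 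Since $\Delta K\prod_iT_i/\max_iT_i\ll Q^{-\nu}$ (using $\psi(q)\ge q^{-1-\theta}$), Theorem~\ref{thm quant non div} gives measure $\ll Q^{-\nu\alpha}$, uniformly in the box shape.
\item Off $\fS_{\Delta,K}$, each nonzero frequency has either $|\Phi_{\bj}'|\ge K/2$ (integrate by parts in $x$) or $\|\Phi_{\bj}\|\ge\Delta$ (integrate by parts in $y$). Hence it contributes $O(Q^{-N\eta})$, with the roughness of the cutoff controlled by (Cond~2)--(Cond~6).
\end{enumerate}
No split by box shape is needed. The one separate case is boxes with some $\delta_i$ of constant size, which the paper handles by induction on $n$ with Badziahin--Levesley as the base case; your plan does not address this either.
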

Theorem \ref{thm: Gallagher conv} then follows from Theorem \ref{thm: main curves} and the fibring trick in Appendix \ref{sec fibering}. The proof of Theorem \ref{thm: main curves} requires some technical arguments from harmonic analysis, which we provide in Appendices \ref{app proof lemma} and \ref{sec proof lemma pruning}.

\subsection{Notation}

For complex-valued functions $f$ and $g$, we write $f \ll g$ or \mbox{$f = O(g)$} if there exists $C$ such that $|f| \le C|g|$ pointwise, we write $f \sim g$ if $f/g \to 1$ in some specified limit, and we write $f = o(g)$ if $f/g \to 0$ in some specified limit. We will work in $n$-dimensional Euclidean space, and the implied constants will always be allowed to depend on $n$, as well as on any parameters which we describe as fixed or constant. Any further dependence will be clarified with a subscript or in words.

We denote $\bT = \bR/\bZ$.
For $x \in \bR$, we write $e(x) = e^{2 \pi i x}$. For $r > 0$ and $\by \in \bR^n$, we write $B_r(\by)$ for the ball of radius $r$ centred at $\by$, and put $B_r = B_r(\bzero)$. For $k \in \bN$, we write $\lam_k$ for $k$-dimensional Lebesgue measure. For $d \in \bN$, we write $\mu_d$ for the induced $d$-dimensional Lebesgue measure on a $d$-dimensional manifold embedded into a Euclidean space.
For $\tau > 0$, we write $\cH^\tau$ for $\tau$-dimensional Hausdorff  measure.

Bold face letters, such as $\bx$, always denote vectors. The components of $\bx$ are denoted by
$x_1,\ldots,x_n$. The variable $n$ is reserved for
the dimension of the ambient Euclidean space. 
We shall use the shorthand notation 
$$
\I:=\{\dx\in \mathbb{R}: |\dx|\leq 1\}.
$$
Given a scale $2^{t}$, we group all $Q:=Q_{t}:=2^{t}\leq q<2^{t+1}$
together (smoothly). 
For a vector $\ba\in \Z^n$, we write $\ba'$ 
for its last $n-1$ coordinates.

\section{Outline of the proofs}\label{sec: outline}

Our proofs blend many different ideas from previous works \cites{CY1, Hua2024, SST, sri2025counting}. To gain a coherent understanding, we first discuss the central idea through both the proofs. After that, more specific techniques diverge for the flat and curved cases. 

\subsection{A high-level overview}

Let $\cM\subset\mathbb{R}^n$ be a smooth manifold, and let $\bx\in\cM$. Let $Q>0$ be a large integer. We wish to study the distribution of $\|q\bx\|$ for $q\in [Q,2Q]$. Given the nature of the approximation problem --- Khintchine or Gallagher --- we find a target set $\cT_Q\subset [0,1]^n$ controlled by the approximation function at height $Q$, i.e. $\psi(Q)$, and estimate the target-hitting statistic 
\[
\{q\in [Q,2Q]:\|q\bx\|\in \cT_Q\}.
\]

For the Khintchine problem, we choose $\cT_Q$ to be a Euclidean ball of radius $\psi(Q)^{1/n}$. For the Gallagher problem, we choose $\cT_Q$ to be a hyperbolic region whose Lebesgue measure is $\psi(Q) \log^{n-1} (1/\psi(Q))$. We further decompose $\cT_Q$ into a union of rectangles of various dimensions. Existing methods, e.g. \cites{BY2023, SST, sri2025counting}, enable us to handle the case when $\cT_Q$ is a rectangle that is not too small, or a finite union of such rectangles.
In the case of counting rational points near curved manifolds, this was done 
for the first time by the second named author \cite{sri2025counting}, albeit under a rigid geometric condition on the manifold. To study the Gallagher problem, one needs to study the target-hitting statistics in a uniform manner for all possible rectangular $\cT_Q$. It is this uniformity that requires new inputs. In this paper, we illustrate how to handle it via Fourier analysis.

Considering only $\|q\bx\|_{q\in [Q,2Q]}$ for each point $\bx\in \cM$, hardly gives us any new information. 
To take advantage of the structure of $\cM$, we instead consider a suitable neighbourhood of the point $\bx$. Let $T_\bx$ be the affine subspace tangent to $\cM$ at $\bx$. For $\delta>0$, consider the $\delta$-ball $B_{\delta,\bx}$ around $\bx$ in $\cM$. Owing to the smoothness of $\cM$, the sets $T_{\bx}$ and $B_{\delta,\bx}$ are equivalent at scale $\delta^{1/2}$, i.e. their Hausdorff distance is $O(\delta^{1/2})$. We choose a suitable value of $\delta$ according to $Q$. Instead of looking at $\|q\bx\|,$ we look at $\|q B_{\delta,\bx}\|$, which is essentially $\|q T_{\bx}\|$ up to a suitable cutoff.

Since $T_\bx$ is linear, our first task is then the study of diophantine approximation for points in affine linear subspaces. When $\cM$ is flat, we shall see that under a mild condition (Hypothesis D), the sequence $\|qT_\bx\|_{q\in [Q,2Q]}$ effectively equidistributes in $[0,1]^n$, uniformly across a large collection of rectangular $\cT_Q$. From here, we obtain the Gallagher convergence theory for such affine subspaces. This is carried out in \S\ref{FlatSection}. 

\bigskip

In the curved setting, we can decompose $\cM$ into small and approximately flat pieces. It would be convenient if all such flat pieces were to satisfy Hypothesis D, but this is not true in general. To overcome this issue, we assume that $\cM$ is non-degenerate and invoke the quantitative non-divergence estimate for non-degenerate manifolds --- due to Bernik--Kleinbock--Margulis, see Theorem \ref{thm quant non div} --- which states, roughly speaking, that most points $\bx\in\cM$ satisfy a certain diophantine condition involving $\bx$ and $T_\bx$. We decompose $\cM$ according to the quantitative non-divergence rate; such a decomposition is referred to as sub-level set decomposition, and its simultaneous approximation version was introduced in \cite{SST}. Recall that we need to handle target-hitting statistics uniformly for many rectangular $\cT_Q$. The decomposition is in general different for different rectangular $\cT_Q.$ For a uniform treatment, one needs to carefully track this dependency. This technical issue will be taken care of in \S\ref{subsec smooth cutoff}. The uniformity is illustrated in \S\ref{sec final reduc}.

These operations furnish a family of decompositions --- one for each rectangular $\cT_Q$ --- of $\cM$ into small almost-linear pieces, and after dropping a small number of them, we can assume that all these almost-linear pieces satisfy a certain diophantine property. We exploit the diophantine property to obtain equidistribution for $\|q\cup_{\bx} T_{\bx}\|_{q\in [Q,2Q]}.$ This is done in \S\ref{subsec fourier expansion} via Fourier analysis in a similar manner to \cite{SST}. In doing so, we also bookkeep the dependence between the shape of $\cT_Q$ and the decomposition on $\cM$. From here, we are able to establish the convergence Gallagher property for non-degenerate manifolds.

\begin{rem}
We restrict attention to non-degenerate manifolds because we rely on the quantitative non-divergence estimate of Bernik, Kleinbock, and Margulis. We believe that the latter should hold for relatively non-degenerate manifolds in diophantine affine subspaces, potentially leading to inroads into Conjecture \ref{conj: rel non-deg}.
\end{rem}

\subsection{The flat case, i.e. Theorem \ref{FlatConv}}
We start by expressing $\cW_n^\times(\psi)$ as a lim sup set of hyperbolic regions indexed by $q \in \bN$. Collecting together $q \in (Q,2Q]$ for $Q$ a power of two, we index by $Q$ and eventually apply the first Borel--Cantelli lemma. This reduces our task to bounding the induced Lebesgue measure of 
\[
\bigcup_{Q \le q \le 2Q}
\{ \bt \in \fB:
(q \bt, q(A\bt + \bb)) \in \cT \},
\]
where $\fB$ is a ball in $\bR^d$ and $\cT = \{ \bx \in \bT^n: \| x_1 \| \cdots \| x_n \| \le \psi(Q) \}$. We can cover $\cT$ by axis-aligned rectangles $\cA$ that are not too small. These steps are executed in \S\ref{completing}.

Next, we observe that $q \bt$ vanishes on $\bT^d$ whenever $\bt$ is $q^{-1}$ times an integer vector. This enables us to decompose the flow $(q\bt, q(A\bt + \bb))$ into small regions $\wp$. Here $\wp$ begins at $(\bm, \bp)$, where $\bm \in \bZ^d$, and $\bp$ is some 
$O(Q)$-bounded $\bZ$-linear combination of $\bb$ and the columns of $A$. This reduces our task to counting how many small regions $\wp$ intersect $\cA$. The key observation is that $\wp$ can only intersect $\cA$ if $\bp$ lies in a certain projection of $\cA$. These steps are executed in \S\ref{TargetHitting}.

Finally, we need to bound how many times $\bp$ lies in this projected set. The latter is efficiently contained within an axis-aligned rectangle, and the requisite count matches the natural equidistribution heuristic. Using smooth weights and Fourier series, this expectation is realised by the contribution of the zero frequency. Our smooth weights are blessed with rapid Fourier decay, and bounding the contributions from the non-zero frequencies boils down to dual diophantine approximation for $\bb$ and the columns of $A$. Hypothesis D implies a gap principle which enables us to successfully bound these contributions. These steps are executed in \S\ref{equidistribution}.

\subsection{The curved case, i.e. Theorem \ref{thm: Gallagher conv}}

Using a fibring trick (Appendix \ref{sec fibering}) and the non-degeneracy of $\cM$, we first reduce matters to a model class of smooth, non-degenerate curves in \S\ref{subsec model class}. 
Next, as described  in \S\ref{subsec psi simp}, to estimate the number of
$\psi(t)$-good multiplicative approximations 
near  
$\cC$ for each integer $t$, we need to 
count lattice points in \textit{non-isotropic} dyadic rectangles around $\cC$ of dimensions $\bdel=(2^{-z_1}, \ldots, 2^{-z_n})$ and volume
$$2^{-z_1+\ldots+z_n}\asymp \psi(2^t)\asymp 2^{-t(1+o(1))}.$$

In \S \ref{sub sec counting prob}, we pass to this non-isotropic counting function  
and in \S\ref{subsec conv mod counting}, we reduce the proof of Theorem \ref{thm: main curves} to that of our main counting result, Theorem \ref{thm: prob count}. Our next aim is to show that, modulo a set of small measure, all of the relevant non-isotropic rectangles around $\cC$ contain the expected number of rational points. 
We pass to a smooth counting function in \S \ref{subsec smooth cutoff} and make some final technical reductions in \S\ref{sec final reduc}.

After two Poisson summations,
this counting function can be reinterpreted
as 
the Fourier transform of
the surface measure of 
$\cT\subseteq \R^{1+n}$
given by $(y,x)\mapsto y(1,\bf(x))$, where $\bf$ is as in \eqref{eq: curve}. 
With this in mind, we smoothly decompose the curve $\cC=\cA_{\bdel}\cup \cB_{\bdel}$
using non-standard bump functions adapted to small scales depending on $t$ and $\bdel$. This is carried out in \S \ref{sub-levelFA}.

The set $\cA_{\bdel}$ is the 
portion of $\cC$ which lifts to the part of $\cT$ where the Fourier transform of its surface measure
does not have rapid decay.
Building on the work of Schindler and two of the authors \cite{SST},
we reinterpret this as a sub-level set estimation problem in \S \ref{subsec sub-level}. To bound the measure of $\cA_{\bdel}$, we use the seminal work
of Bernik, Kleinbock and Margulis from \cite{BKM2001} on quantitative non-divergence in the space of lattices (see Theorem \ref{thm quant non div}). This is carried out in \S \ref{subsec quant nondiv}.

To establish good Fourier decay of the surface measure on the complementary part $\cB_\del$, 
we estimate a family of oscillatory integrals whose amplitudes are the aforementioned bump functions whose derivatives blow up as in \eqref{eq r rough est} and \eqref{eq amp blowup}. 
This is achieved in \S \ref{subsec fourier expansion}. The broad strategy here is similar to that in \cite{SST}. However, the non-isotropic/multiplicative nature of problem at hand requires more delicate estimates, leading to new constraints on the parameters in the quantitative non-divergence estimate and the scale of the bump functions, all of which are functions of $\bdel$ and $t$. In \S \ref{subsec proof counting}, we choose these parameters and combine our arguments to prove Theorem \ref{thm: prob count}.

\section{The flat setting}
\label{FlatSection}

In this section, we first discuss Hypothesis D --- including its sharpness and genericity --- and 
prove Theorem \ref{FlatConv}.

\subsection{Diophantine hypothesis}

Let $\cL$ be $d$-dimensional. We assume that the projection of $\cL$ onto any axis-aligned subspace of dimension $d$ is surjective. Then $\cL$ has the Monge parametrisation
\begin{equation}
\label{Monge}
\cL =
\left \{
\begin{pmatrix}
\bt \\
A\bt + \bb
\end{pmatrix}
: \bt \in \bR^d \right \},
\end{equation}
where 
$$
A = (\ba^{(1)}, 
\ldots, \ba^{(d)}) \in \bR^{(n-d)\times d}
\quad \mathrm{and}
\quad \bb \in \bR^{n-d}.
$$
Moreover, by our projection-surjectivity assumption, $\cL$ has such a parametrisation in each of the $n!$ many orderings of the standard basis on $\bR^n$. To elaborate, in \eqref{Monge} we treated the first $d$ coordinates as being special. However, we can deem any choice of $d$ coordinates to be special, resulting in different Monge forms. For example, in the case $(n,d) = (2,1)$, we can present a planar line as $y=ax+b$ or $x=a^{-1}y-a^{-1}b.$

Hypothesis D will be described in terms of Hypothesis $D(c, \sig_1, \sig_2)$, for certain parameters $c,\sig_1, \sig_2$. For $k \in \bN$ and $\bm \in \bZ^k$, we write
\[
|\bm| = \max \{ |m_j|: 1 \le j \le k \}, \qquad
H(\bm) = \prod_{j \le k} \max \{ 1, |m_j| \}.
\]

\begin{definition}
Let $\cL$ be as described at the beginning of this section. Let $c > 0$, and let $\sig_1,\sig_2 \in (0, \infty]$. 
We say $\cL$ satisfies 
Hypothesis $D(c, \sig_1,\sig_2)$ 
if, for any Monge parametrisation and any $T \ge 1$, the inequality
\begin{align}
\label{Cond: D}
\max \{ \| \bxi \cdot \ba^{(1)} \|,
\ldots, \| \bxi \cdot \ba^{(d)} \|, \| \bxi \cdot \bb \| \} \geq cT^{-\sigma_1} 
\end{align}
holds whenever
\begin{equation}
\label{whenever}
\bzero \ne \bxi \in \bZ^{n-d}, \qquad
|\bxi|\leq T^{\sigma_2}, 
\qquad
H(\bxi)\leq T.
\end{equation}
\end{definition}
Now we can describe what it means for 
$\cL$ to satisfy Hypothesis D.
\begin{definition}
\label{HypothesisD}
Let $\cL$ be as described at the beginning of this section. Hypothesis D asserts that there exist 
\[
c > 0 \quad \text{and} \quad
\gam \in \left[
\frac{n-d}{n}, 1 
\right)
\]
such that Hypothesis $D(c, \gamma(1+d\sig_2), \sig_2)$ holds for all
\[
\frac1{n-d} \le \sig_2 \le 1.
\]
\end{definition}

\begin{lemma}
Let $\cL$ be as described at the beginning of this section, and suppose Hypothesis $D(c, \sig_1, \infty)$ holds for some $c > 0$ and some
\[
\sig_1 \in \left[ 1, \frac{n}{n-d} \right).
\]
Then Hypothesis D holds.
\end{lemma}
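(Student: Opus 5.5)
Hypothesis $D(c,\sig_1,\infty)$ places no restriction on $|\bxi|$; the only constraint in \eqref{whenever} is $H(\bxi)\le T$. The plan is therefore to pick $\gam$ so that, for every $\sig_2 \in [\frac1{n-d},1]$, the exponent $\gam(1+d\sig_2)$ required by Hypothesis D is at least $\sig_1$. Then the hypothesis we are given implies the one we need, because a weaker lower bound $cT^{-\gam(1+d\sig_2)} \le cT^{-\sig_1}$ is demanded on a smaller set of $\bxi$.

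The key steps are as follows. First, fix a Monge parametrisation, $T\ge1$, and $\sig_2\in[\frac1{n-d},1]$. Take any $\bxi$ satisfying \eqref{whenever} with this $\sig_2$. In particular $\bzero\ne\bxi$ and $H(\bxi)\le T$, so $\bxi$ satisfies the conditions of Hypothesis $D(c,\sig_1,\infty)$, where $|\bxi|\le T^{\infty}$ is vacuous. Hence the maximum in \eqref{Cond: D} is at least $cT^{-\sig_1}$.

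Second, since $T\ge 1$, we have $cT^{-\sig_1}\ge cT^{-\gam(1+d\sig_2)}$ as soon as $\gam(1+d\sig_2)\ge \sig_1$. The smallest value of $1+d\sig_2$ on the allowed range occurs at $\sig_2=\frac1{n-d}$, where it equals $\frac{n}{n-d}$. So it suffices to have $\gam\ge \sig_1\frac{n-d}{n}$.

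Third, choose $\gam=\sig_1\frac{n-d}{n}$. The constraint $\sig_1\ge 1$ gives $\gam\ge\frac{n-d}{n}$, and $\sig_1<\frac{n}{n-d}$ gives $\gam<1$. Thus $\gam$ lies in the interval required by Definition~\ref{HypothesisD}. The same constant $c$ serves throughout, and the argument is uniform over all Monge parametrisations and all $T$, so Hypothesis D holds.

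There is no real obstacle here. The only point needing care is the direction of the monotonicity: the exponent must be checked at the worst case $\sig_2=\frac1{n-d}$, and one must use $T\ge1$ so that a larger exponent gives a smaller threshold.
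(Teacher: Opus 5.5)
Your proof is correct and matches the paper's argument: take $\gamma = \sigma_1 (n-d)/n \in [\frac{n-d}{n},1)$ and note that $\gamma(1+d\sigma_2) \ge \sigma_1$ for every $\sigma_2 \ge \frac{1}{n-d}$. Hypothesis $D(c,\sigma_1,\infty)$ then yields $D(c,\gamma(1+d\sigma_2),\sigma_2)$, because the admissible set of $\bxi$ only shrinks and, since $T \ge 1$, the required lower bound only weakens. You spell out these last two points, which the paper leaves implicit.
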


\begin{proof}
Put
\[
\gam = \sig_1 \frac{n-d}{n} \in \left[ \frac{n-d}{n}, 1 \right),
\]
and let $\sig_2 \geq 1/(n-d)$. Then
\[
\gam(1+d\sig_2) \geq \frac{\sig_1(n-d)}n \left(
1+\frac{d}{n-d}
\right)
= \sig_1,
\]
so Hypothesis $D(c, \gamma(1+d\sig_2), \sig_2)$ holds.
\end{proof}

We now discuss some special cases in which the condition simplifies.

\begin{example} Consider the case $d = n - 1$ of affine hyperplanes. Then
$A = (a_1, \ldots, a_d)$ is a row vector of non-zero reals. Our Monge form is
\[
(t_1, \ldots, t_d, a_0 + a_1 t_1 + \cdots + a_d t_d),
\]
where $a_0 = b$. Kleinbock \cite{Kle2003} considered the case of strong extremality, namely $\psi(q) = q^{-\tau}$ for $\tau > 1$. By \cite[Corollary 5.7]{Kle2003}, it is necessary to assume that 
\[
\omega(A,b) := \inf\{\sig\geq 0:\max \{ \| q a_j \| : 0 \le j \le d \} \gg_\sig q^{-\sig} \} \leq n.
\]
In this case, since $n-d=1$ we always have $\sig_2=1$, and Hypothesis D reduces to having
\[
\omega(A,b) < n
\]
in any ordering of the standard basis.

Note that Kleinbock allows some of the $a_i$ to vanish. Our method is also robust enough to achieve this, but we defer the discussion until after the proof, see Remark \ref{KleinbockRemark}.
\end{example}

\begin{example}
In the case $(n,d) = (2,1)$ of planar lines, the assumption that \eqref{Cond: D} holds for some $\sig_1 < 2$ with $\sig_2 = 1$ matches Huang's threshold \cite[Theorem 2.4]{Hua2024}.
\end{example}

We now show that Hypothesis D is almost sharp. A vector $\bx \in \bR^n$ is \emph{very well multiplicatively approximable} (VWMA) if it satisfies the following equivalent conditions \cite[\S1]{KM1998}:
\begin{enumerate}[(i)]
\item For some $\tau > 1$, there are infinitely many $q \in \bZ$ such that
\[
\| q x_1 \| \cdots \| q x_n \| < |q|^{-\tau}.
\]
\item For some $\tau > 1$, there are infinitely many $\bm \in \bZ^n$ such that
\[
\| \bm \cdot \bx \| < H(\bm)^{-\tau}.
\]
\end{enumerate}

\begin{lemma}
Let $\cL$ be as described at the beginning of this section. Let $\gamma > 1$ and $\sig_2 \ge 1/(n-d)$. Assume that, for infinitely many positive integers $T$, there exists 
$\bxi = \bxi_T \in \bZ^{n-d} 
\setminus \{ \bzero \}
$
such that
\[
\max \{ \| \bxi \cdot \ba^{(1)} \|,
\ldots, \| \bxi \cdot \ba^{(d)} \|, \| \bxi \cdot \bb \| \} < T^{-\gam(1 + d\sig_2)} 
\]
and
\[
|\bxi|\leq T^{\sig_2}, \qquad H(\bxi) \leq T.
\]
Then all points on $\cL$ are VWMA, and in particular $\cL$ is not convergent Gallagher.
\end{lemma}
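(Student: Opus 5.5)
The plan is to verify condition (ii) in the definition of VWMA directly for every point of $\cL$, by turning each $\bxi_T$ into an integer vector $\bm\in\bZ^n$. The equivalence of (i) and (ii) from \cite[\S1]{KM1998} then gives VWMA.

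Fix a point $\bx=(\bt, A\bt+\bb)\in\cL$. For each admissible $T$, write $\bxi=\bxi_T$ and choose nearest integers
\[
\bxi\cdot\ba^{(j)} = p_j+\eps_j \quad (1\le j\le d), \qquad \bxi\cdot\bb = p_0+\eps_0,
\]
so that $|\eps_j| < T^{-\gam(1+d\sig_2)}$ for $0\le j\le d$. Set
\[
\bm=(-p_1,\ldots,-p_d,\bxi)\in\bZ^n .
\]
This vector is nonzero because $\bxi\ne\bzero$. A direct computation gives
\[
\bm\cdot\bx = -\sum_{j\le d} p_jt_j+\sum_{j\le d} t_j(p_j+\eps_j)+p_0+\eps_0 \equiv \sum_{j\le d}t_j\eps_j+\eps_0 \mmod 1,
\]
and hence
\[
\|\bm\cdot\bx\|\ll_{\bt} T^{-\gam(1+d\sig_2)}.
\]

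Next I bound the height of $\bm$. Since $|p_j|\le (n-d)|\bxi|\,|\ba^{(j)}|+1\ll_A T^{\sig_2}$, we have
\[
H(\bm)=H(\bxi)\prod_{j\le d}\max\{1,|p_j|\}\ll T\cdot T^{d\sig_2}=T^{1+d\sig_2}.
\]
Fix any $\tau\in(1,\gam)$. Then
\[
\|\bm\cdot\bx\|\ll T^{-\gam(1+d\sig_2)} < H(\bm)^{-\tau}
\]
once $T$ is large, because $\gam>\tau$ and $1+d\sig_2\ge1$, so the loss of constants is absorbed by a positive power of $T$.

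It remains to check that this yields infinitely many distinct $\bm$. If only finitely many distinct $\bm$ arise, some fixed $\bm$ occurs for arbitrarily large $T$, which forces $\|\bm\cdot\bx\|=0$. In that case the multiples $k\bm$, for $k\in\bN$, are infinitely many distinct vectors with $\|k\bm\cdot\bx\|=0<H(k\bm)^{-\tau}$. Either way, condition (ii) holds, so $\bx$ is VWMA.

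Finally, every point of $\cL$ being VWMA means that for each $\bx\in\cL$ there is some $\tau>1$ (namely any $\tau\in(1,\gam)$, with $\gam$ fixed) such that $\bx\in\cW_n^\times(q^{-\tau})$. The function $\psi(q)=q^{-\tau}$ is non-increasing and satisfies
\[
\sum_{q\ge1} q^{-\tau}(\log q)^{n-1}<\infty,
\]
yet $\cW_n^\times(\psi)$ contains all of $\cL$, which has positive induced measure. Hence $\cL$ is not convergent Gallagher.

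The only delicate points are that the implied constants depend on $\bt$ and $A$ but not on $T$, which suffices pointwise, and the possible repetition of $\bm$, which is handled by the multiples argument above. I do not expect a genuine obstacle.
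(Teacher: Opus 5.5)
Your argument is correct and is essentially the paper's. You use the same vector $\bm=(-p_1,\ldots,-p_d,\bxi)$ and the same height bound $H(\bm)\ll T^{1+d\sig_2}$, absorbing constants by taking $\tau<\gam$ instead of the paper's small $\eps$, and your treatment of a repeated $\bm$ covers a point the paper passes over.

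The one slip is in your last paragraph. The exponent $\tau$ in (ii) need not equal the exponent in (i), so you cannot assert $\bx\in\cW_n^\times(q^{-\tau})$ for $\tau\in(1,\gam)$. Instead, write $\cL\subseteq\bigcup_{k}\cW_n^\times(q^{-1-1/k})$ and pick $k$ with $\mu_d(\cW_n^\times(q^{-1-1/k}))>0$, or note that the transference exponent depends only on $\tau$ and $n$.
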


\begin{proof} Let $\bt \in \bR^d$ and 
\[
\bx = \begin{pmatrix}
\bt \\
A \bt + \bb
\end{pmatrix}.
\]
Put
\[
v = \frac{1+\gam}2 (1 + d\sig_2),
\]
and let $\eps \in (0,1)$ be sufficiently small in terms of $A$ and $\bt$. By assumption, for some arbitrarily large $T$, there exist $\bp \in \bZ^{d+1}$ and $\bq \in \bZ^{n-d}$ with
\[
0 < |\bq| \le T^{\sig_2},
\qquad
H(\bq) \le T
\]
and
\[
|p_j + \bq \cdot \ba^{(j)}| < \eps^2 T^{-v}
\qquad (1 \le j \le d + 1),
\]
where $\ba^{(d+1)} = \bb$.

Write $(A,\bb) = (a_{i,j})$ and $t_{d+1} = 1$. By the triangle inequality,
\[
\left|
\sum_{\substack{i \le n-d \\ j \le d+1}}
(p_j + q_i a_{i,j}) t_j 
\right| < \eps T^{-v}.
\]
With $\bm = (p_1,\ldots,p_d,q_1,\ldots,q_{n-d})$, we therefore have
\begin{align*}
|\bm \cdot \bx + p_{d+1}| &= 
\left|
\left( \sum_{j \le d+1} p_j t_j \right) +
\sum_{i \le n-d} q_i \left(b_i + \sum_{j \le d} a_{ij} t_j \right)
\right| \\ &= 
\left|
\sum_{\substack{i \le n-d \\ j \le d+1}}
(p_j + q_i a_{i,j}) t_j 
\right| < \eps T^{-v}.
\end{align*}
Furthermore, since $|\bq|\leq T^{\sig_2}$,
\[
H(\bm) \ll_{A,\bt} |\bq|^d H(\bq) \le T^{1+d\sig_2} =: T_0.
\]
With
\[
\tau = \frac{v}{1+d\sig_2} = \frac{1 + \gam}{2} > 1,
\]
we have
\[
\| \bm \cdot \bx \| < \eps T_0^{-\tau} \ll_{A,\bt} H(\bm)^{-\tau}.
\]
As $T_0$ can be arbitrarily large, we see that $\bx$ is VWMA.
\end{proof}

Next, we show that Hypothesis D is generic.

\begin{lemma}
\label{exceptional}
Let $1 \le d < n$ be integers, and let $\gamma \ge 1$. Let $\cW$ be the set of matrices 
\[
A = (\ba^{(1)},\ldots, \ba^{(d+1)}) \in [0,1]^{(n-d) \times (d+1)}
\]
such that there are infinitely many $\bq \in \bZ^{n-d}$ for which
\begin{equation}
\label{strange}
\max \{ \| \bq \cdot \ba^{(1)} \|, \ldots, \| \bq \cdot \ba^{(d+1)} \| \} < H(\bq)^{-\sig_1},
\end{equation}
where
\begin{equation}
\label{stranger}
|\bq| = H(\bq)^{\sig_2}, \qquad
\sig_1 = \gamma(1 + d\sig_2).
\end{equation}
Then
\[
\dimh(\cW) = (d+1)(n-d-1)+\frac{d+2}{1+\gam(d+1)}
\le (n-d)(d+1)-d.
\]
\end{lemma}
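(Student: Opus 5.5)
The plan is to prove the two bounds on $\dimh(\cW)$ separately: the lower bound by embedding a classical Jarník--Besicovitch set, and the upper bound by a direct covering argument. The concluding inequality is then elementary. Since $(n-d)(d+1)-d-(d+1)(n-d-1)=1$, it amounts to
\[
\frac{d+2}{1+\gam(d+1)} \le 1,
\]
which holds because $\gam\ge1$.

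\emph{Lower bound.} I restrict to the vectors $\bq=(q_1,0,\ldots,0)$, for which $H(\bq)=|\bq|=|q_1|$. Here I read \eqref{stranger} as allowing the exponent $\sig_2=1$; the admissible range is $1/(n-d)\le\sig_2\le1$, forced by $|\bq|\le H(\bq)\le|\bq|^{n-d}$. With $\sig_2=1$ we have $\sig_1=\gam(d+1)$, and \eqref{strange} becomes
\[
\max_{j\le d+1}\|q_1 a_{1,j}\|<|q_1|^{-\gam(d+1)}.
\]
This is a condition only on the first row $(a_{1,1},\ldots,a_{1,d+1})\in[0,1]^{d+1}$: that row must be simultaneously $q^{-\tau}$-approximable with $\tau=\gam(d+1)>1/(d+1)$. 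The remaining $n-d-1$ rows are unconstrained. By the Jarník--Besicovitch theorem, the first-row set has Hausdorff dimension $(d+2)/(1+\tau)$. The full set is the product of this set with the cube $[0,1]^{(n-d-1)(d+1)}$. Since a product with a full-dimensional cube adds exactly the cube's dimension (e.g. via Marstrand's product inequality), we get $\dimh(\cW)\ge (d+1)(n-d-1)+\frac{d+2}{1+\gam(d+1)}$.

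\emph{Upper bound.} For a fixed $\bq$ with $H(\bq)\asymp H$ and $|\bq|=H^{\sig_2}$, I cover column by column. Each column $\ba^{(j)}\in[0,1]^{n-d}$ must lie within $\rho:=H^{-\sig_1}|\bq|^{-1}$ of one of the $\ll|\bq|$ hyperplanes $\{\bq\cdot\ba=p\}$ meeting the cube. This region is covered by $\ll |\bq|\rho^{-(n-d-1)}$ cubes of side $\rho$. Hence the set of $A$ attached to this $\bq$ is covered by $\ll (|\bq|\rho^{-(n-d-1)})^{d+1}$ cubes of side $\rho$. Now write $s=(d+1)(n-d-1)+u$. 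The $s$-dimensional cost of these cubes is
\[
\ll |\bq|^{d+1-u}H^{-\sig_1u} = H^{\sig_2(d+1-u)-\gam(1+d\sig_2)u}.
\]
The number of $\bq$ with $H(\bq)\asymp H$ is $\ll H^{1+\eps}$. So the Hausdorff--Cantelli lemma gives $\cH^s(\cW)=0$ whenever
\[
u>\frac{1+\sig_2(d+1)}{\gam+\sig_2(1+\gam d)}.
\]
The right-hand side is non-decreasing in $\sig_2$, because the sign of its derivative is that of $(d+1)\gam-(1+\gam d)=\gam-1\ge0$. Therefore it is at most its value at $\sig_2=1$, which is $(d+2)/(1+\gam(d+1))$. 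The same bound then holds uniformly, even if $\cW$ is read as the union over all admissible $\sig_2$; this requires using a dyadic decomposition in both $H$ and $|\bq|$, which costs only logarithmic factors.

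\emph{Main obstacle.} The step I expect to need the most care is the upper-bound count. I must make sure the hyperplane-neighbourhood cover is efficient, so that overlaps do not matter and the count $\ll|\bq|\rho^{-(n-d-1)}$ per column is correct up to constants. I must also make sure the sum over $\bq$ with prescribed $H(\bq)$ and $|\bq|$ loses only $H^{\eps}$. Both points follow from standard divisor-type bounds: $\#\{\bq: H(\bq)\le H\}\ll H(\log H)^{n-d-1}$.
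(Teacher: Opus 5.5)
Your proposal is correct and follows the paper's route. For the upper bound you cover neighbourhoods of the resonant sets $\{\bq^T A+\bp=\bzero\}$ by $\ll|\bq|^{d+1}\rho^{-(d+1)(n-d-1)}$ cubes of side $\rho = H(\bq)^{-\sig_1}|\bq|^{-1}$ and apply Hausdorff--Cantelli. For the lower bound you restrict to $\bq$ supported on one coordinate (so $\sig_2=1$) and take the product of a Jarn\'{\i}k-type set (the paper cites Levesley) with a full cube. The only difference is bookkeeping: the paper sums explicitly over the ordered coordinates of $\bq$ and gets two conditions of which the first dominates, whereas your dyadic grouping in $H(\bq)$ and $|\bq|$ together with monotonicity in $\sig_2$ reaches the same threshold $(d+2)/(1+\gam(d+1))$ directly.
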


\begin{proof}
We begin with the upper bound
\[
\dimh(\cW) \le (d+1)(n-d-1)+\frac{d+2}{1+\gam(d+1)},
\]
adapting a classical covering argument \cite{DV1997}. Put
\[
m = d + 1, \qquad
m' = n - d.
\]
For $\bp \in \bZ^m$ and $\bzero \ne \bq \in \bZ^{m'}$, let
\[
R_{\bp, \bq} = \{ A \in \bR^{m' \times m}: \bq^T A + \bp = \bzero \}
\]
and, for $\del > 0$, let
\[
\Delta(R_{\bp,\bq}, \del) =
\{
A \in [0,1]^{m' \times m}: \dist(A, R_{\bp, \bq}) < \del
\}.
\]
Note that if \eqref{strange} holds then, for some $\bp \in \bZ^m$, we have
\[
\dist(A, R_{\bp, \bq}) < 
H(\bq)^{-\sig_1} |\bq|^{-1},
\]
and so $A \in \Delta(R_{\bp,\bq}, 
H(\bq)^{-\sig_1} |\bq|^{-1})$. Consequently, for any $Q \ge 2$,
\[
\cW \subseteq \bigcup_{|\bq| \ge Q}
\bigcup_{\bp \in \bZ^m} \Del(R_{\bp, \bq}, H(\bq)^{-\sig_1}
|\bq|^{-1}).
\]
Let
\begin{equation}
\label{tau}
\tau > m(m'-1) + \frac{1 + m}{1 + \gam m}.
\end{equation}
The implied constants below depend at most on $m,m',\tau$.

Given $\bq \ne \bzero$, let us write $\del = H(\bq)^{-\sig_1}
|\bq|^{-1}$. Then
\[
\# \{ \bp \in \bZ^m:
\Del(R_{\bp,\bq}, \del) \ne \emptyset \} \ll |\bq|^m.
\]
Now $\Del(R_{\bp,\bq}, \del)$ is covered by a collection $\cC_{\bp,\bq}$ of balls of radius $\del$, where
\[
\# \cC_{\bp, \bq} \ll
\del^{m(1-m')}.
\]
By \eqref{stranger}, we have
\[
\sig_2 = \frac{\log |\bq|}{\log H(\bq)}, \qquad
\delta^{-1} = |\bq|^{\gamma d+1+\gamma \sig^{-1}_2}.
\]
Now
\begin{align*}
\cH^\tau(\cW) &\ll \sum_{|\bq| \ge Q} |\bq|^m \del^{\tau - m(m'-1)} \\
&= \sum_{|\bq| \ge Q} |\bq|^{m + (\gam d + 1 + \gam \sig_2^{-1})(m(m'-1) - \tau)}
\\
&= |\bq|^{(mm' - \tau) (\gam d+1)-m\gam d}H(\bq)^{-(\tau - m(m'-1)) \gam},
\end{align*}
and so
\begin{align*}
&\cH^\tau(\cW) \\
&\ll \sum_{q_{m'} \ge Q} \: \sum_{q_1, \ldots, q_{m'-1} \le q_{m'}} q_{m'}^{(mm'-\tau)(\gam d+1)-m\gam d} (q_1 \cdots q_{m'})^{-(\tau - m(m'-1))\gam} \\
&\ll \sum_{q_{m'} \ge Q} q_{m'}^{(mm'-\tau)(\gam d+1)-m\gam d-(\tau-m(m'-1))\gam} (\log q_{m'} + q_{m'}^{1-(\tau - m(m'-1))\gam})^{m'-1}.
\end{align*}

Taking $Q \to \infty$ furnishes $\cH^\tau(\cW) = 0$, as long as
\begin{equation}
\label{taureq}
(mm'-\tau)(\gam d+1)-m\gam d-(\tau-m(m'-1))\gam < -1
\end{equation}
and
\begin{align*}
&(mm' - \tau)(\gam d+1) - m\gam d -(\tau-m(m'-1))\gam\\
&\qquad \qquad + (m'-1)(1-(\tau - m(m'-1))\gam) < -1.
\end{align*}
These inequalities hold as long as
\[
\tau >
\max \left\{ mm' + \frac{1-m(\gam d + \gam)}{1+\gam d +\gam},
m(m'-1) + \frac{m+m'}{1+\gam d+m'\gam}
\right \}.
\]
Since $m = d + 1$ and $m + m' - 1 = n$, we require that
\[
\tau > \max \left\{ mm' + \frac{1- \gam m^2}{1+\gam m},
m(m'-1) + \frac{1 + n}{1 + n \gam}
\right \},
\]
i.e.
\[
\tau > \max \left\{ m(m'-1) + \frac{1+m}{1+\gam m},
m(m'-1) + \frac{1 + n}{1 + \gam n}
\right \}.
\]
As $\gam \ge 1$ and $n \ge m$, the first term dominates. Thus, by \eqref{tau}, our requirement is met and indeed $\cH^\tau(\cW) = 0$. Therefore
\begin{align*}
\dimh(\cW) &\leq m(m'-1) + \frac{1+m}{1+\gam m} \\
&= (d+1)(n-d-1)+\frac{d+2}{1+\gam(d+1)}.
\end{align*}

\bigskip

The lower bound follows by considering only vectors of the form 
\[
\bq=(0,\dots,0,q).
\]
For such vectors,
\[
\sig_2 = 1, \qquad
\sig_1 = \gamma(1+d).
\]
Specialising
\[
(m,n,\boldsymbol \bet,\lam) = (1,d+1,\bzero,\gam(d+1))
\]
in \cite[Theorem 1]{Lev1998}, we obtain some $\cW' \subseteq \cW$ that is the Cartesian product of a set $\cW''\subseteq \mathbb{R}^{d+1}$ with $\mathbb{R}^{(d+1)(n-d-1)}$, where
\[
\dimh(\cW'')=\frac{d+2}{1+\gam(d+1)}.
\]
From here, we see that
\[
\dimh(\cW)\geq \dimh(\cW')=(d+1)(n-d-1)+\frac{d+2}{1 + \gam (d+1)}.
\]
\end{proof}

\subsection{Equidistribution on tori}
\label{equidistribution}

\begin{lemma}
\label{equi}
Let $A, \bb$ satisfy \eqref{Cond: D} whenever we have \eqref{whenever}, where
\[
c > 0, \qquad
1 \le \sig_1 \le n,
\qquad
\frac1{n-d} \le \sig_2 \le 1.
\]
Let
\begin{align*}
\cP &= \left \{ 
q_1 \ba^{(1)} + \cdots + q_{d+1} \ba^{(d+1)}\mmod\mathbb{Z}^{n-d}:\bq \in \bZ^{d+1}, \: 0 \le q_j \le Q - 1 \: \forall j
\right \} \\ &\subset \bT^{n-d},
\end{align*}
where $\ba^{(d+1)} = \bb$. Let $\cS$ be an axis-aligned rectangle in $\bT^{n-d}$, and let $Q \in \bN$. If
\[
\lam_{n-d}(\cS) \ge \eps Q^{\eps - 1/\sig_1}
\]
for some $\eps > 0$, and each side length of $\cS$ is at least $\eps \lam_{n-d}(\cS)^{\sig_2}$, then
\[
\# \cS \cap \cP \ll_{n,c,\eps} \lam_{n-d}(\cS) Q^{d+1}.
\]
\end{lemma}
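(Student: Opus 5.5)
We bound $\#\cS\cap\cP$ by a smooth majorant and Fourier expansion on $\bT^{n-d}$. Write $\cS=\prod_{i\le n-d}[y_i,y_i+\rho_i]$ with $\prod_i\rho_i=\lam_{n-d}(\cS)=:L$. Choose a product weight $W(\bx)=\prod_i w_i(x_i)$, where each $w_i\ge 0$ is a periodised bump with $w_i\ge 1$ on $[y_i,y_i+\rho_i]$ and support in an interval of length $2\rho_i$. Then $\hat w_i(0)\asymp\rho_i$ and $|\hat w_i(\xi_i)|\ll_N\rho_i(1+\rho_i|\xi_i|)^{-N}$. Hence
\[
\#\cS\cap\cP\le\sum_{\bq}W\Bigl(\textstyle\sum_j q_j\ba^{(j)}\Bigr)=\sum_{\bxi\in\bZ^{n-d}}\hat W(\bxi)\prod_{j\le d+1}\sum_{0\le q_j<Q}e(q_j\,\bxi\cdot\ba^{(j)}).
\]
The zero frequency contributes $\hat W(\bzero)Q^{d+1}\asymp LQ^{d+1}$, which is the main term. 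For $\bxi\ne\bzero$ we use the geometric series bound
\[
\prod_j\min\{Q,\|\bxi\cdot\ba^{(j)}\|^{-1}\}\le Q^{d}\,\max_j\|\bxi\cdot\ba^{(j)}\|^{-1},
\]
where the inequality comes from bounding all factors but the one with the largest $\|\cdot\|$ by $Q$. So it suffices to show
\[
\sum_{\bxi\ne\bzero}|\hat W(\bxi)|\,\min\{Q,\ \max_j\|\bxi\cdot\ba^{(j)}\|^{-1}\}\ll LQ.
\]

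By the decay of $\hat W$, frequencies with $|\xi_i|>\rho_i^{-1}Q^{\eta}$ for some $i$ (with $\eta$ small in terms of $\eps$) contribute $O(Q^{-100})$. It remains to handle the box $B=\{\bxi:|\xi_i|\le\rho_i^{-1}Q^\eta\}$, on which $|\hat W|\ll L$. We therefore need to show $\sum_{\bzero\ne\bxi\in B}\min\{Q,\max_j\|\bxi\cdot\ba^{(j)}\|^{-1}\}\ll Q$, with only a $Q^{o(1)}$ loss allowed, which we absorb by adjusting $\eta$ against $\eps$.

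Fix $T$ with $H(\bxi)\le T$ and $|\bxi|\le T^{\sig_2}$ for all $\bxi\in B$. Since $H(\bxi)\le\prod_i\rho_i^{-1}Q^{n\eta}=L^{-1}Q^{n\eta}$, we may take $T\approx L^{-1}Q^{O(\eta)}$. Each side length satisfies $\rho_i\ge\eps L^{\sig_2}$, so $|\bxi|\le\eps^{-1}L^{-\sig_2}Q^\eta\le T^{\sig_2}$ up to adjusting constants; this is exactly where the side-length hypothesis enters. Hypothesis $D(c,\sig_1,\sig_2)$ then holds for differences $\bxi-\bxi'$ of elements of $B$, since $2B$ has the same shape and the needed $T$ grows only by a constant factor.

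This gives a gap principle: the points $\bigl(\bxi\cdot\ba^{(j)}\bigr)_{j\le d+1}\in\bT^{d+1}$, for $\bxi\in B$, are $cT^{-\sig_1}$-separated in sup norm. In particular each has $\max_j\|\bxi\cdot\ba^{(j)}\|\ge cT^{-\sig_1}$. Dyadically decompose by $\max_j\|\bxi\cdot\ba^{(j)}\|\asymp 2^{-k}$. By separation, the number of $\bxi$ at level $k$ is at most
\[
\ll\min\bigl\{\#B,\ (2^{-k}T^{\sig_1})^{d+1}+1\bigr\}.
\]
The weights are $\min\{Q,2^k\}$, and $2^k\le T^{\sig_1}/c$.

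Summing over $k$, the terms with $2^k\ge Q$ contribute at most $Q\cdot\#\{\bxi:\max\|\cdot\|\le 1/Q\}\ll Q\bigl((T^{\sig_1}/Q)^{d+1}+1\bigr)$. This is $\ll Q^{1+o(1)}$ provided $T^{\sig_1}\le Q$, and $T^{\sig_1}\le Q^{1-\eps\sig_1+O(\eta)}\ll Q$ follows from $L\ge\eps Q^{\eps-1/\sig_1}$. The terms with $2^k<Q$ contribute $\sum_k 2^k\bigl((2^{-k}T^{\sig_1})^{d+1}+1\bigr)\ll T^{\sig_1}+Q\ll Q$, using $d+1\ge 2$.

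Thus the nonzero frequencies give $O(LQ^{d+1+o(1)})$. The main obstacle is removing this $Q^{o(1)}$ loss, which comes from the $Q^{\eta}$ widening of $B$ and from the count of levels. I would try to remove it by exploiting the strict saving $T^{\sig_1}\le Q^{1-\eps\sig_1/2}$. This turns the $k\ge\log Q$ range into a geometric saving, and lets one sum the $\hat W$ tails over shells $|\xi_i|\sim 2^{r}\rho_i^{-1}$ dyadically instead of truncating. Each shell then has its own $T_r\approx 2^{(n-d)r}L^{-1}$, and the rapid decay $2^{-rN}$ beats the polynomial growth $T_r^{\sig_1(d+1)}$. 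For large shells where $|\bxi|\le T^{\sig_2}$ may fail, I would use the trivial bound $Q^{d+1}$ against the decay $2^{-rN}$. This yields the claimed bound with implied constant depending on $n,c,\eps$.
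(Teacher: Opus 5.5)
There is a genuine gap in your bound for the inner sum over the box $B$. At level $k$ your weighted count is $2^k\bigl((2^{-k}T^{\sig_1})^{d+1}+1\bigr)=2^{-kd}T^{\sig_1(d+1)}+2^k$. Since $d\ge1$, the first piece \emph{decreases} in $k$, so the sum is dominated by the smallest $k$ and your bound for it is of order $T^{\sig_1(d+1)}$, not $T^{\sig_1}$.

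Reinstating the cap $\#B\approx T$ only improves this to $\asymp T^{\sig_1}(\#B)^{d/(d+1)}$. This is sharp for arbitrary $cT^{-\sig_1}$-separated sets: take $\#B$ points forming a $(d+1)$-dimensional grid of that mesh around the origin. When $\lam_{n-d}(\cS)$ is near its lower limit, this is about $Q^{1+d/((d+1)\sig_1)-O(\eps)}$, for instance $Q^{3/2-O(\eps)}$ when $d=\sig_1=1$. So you lose a genuine power of $Q$, not $Q^{o(1)}$, and the dyadic-shell refinement you sketch does not address this.

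The culprit is the step
$\prod_j\min\{Q,\|\bxi\cdot\ba^{(j)}\|^{-1}\}\le Q^d\min\{Q,(\max_j\|\bxi\cdot\ba^{(j)}\|)^{-1}\}$.
A point whose $d+1$ coordinates all have size $2^{-k}$ receives weight $Q^d2^k$ instead of $2^{k(d+1)}$. A gap principle at a single scale cannot compensate for that.

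The fix, which is what the paper does, is to keep the full product $\prod_{j\le d+1}\min\{1,(Q\|\bxi\cdot\ba^{(j)}\|)^{-1}\}$ coming from the geometric sums.

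\begin{itemize}
\item Truncate the frequencies at $|\xi_j|\le Q^{\eps/n}/r_j$.
\item Apply the hypothesis with $T=\prod_j Q^{\eps/n}/r_j$. Your side-length check for $|\bxi|\le T^{\sig_2}$ goes through essentially as you describe.
\item Since $\sig_1\ge1$ and $\lam_{n-d}(\cS)\gg Q^{\eps-1/\sig_1}$, the points $(\bxi\cdot\ba^{(j)})_{j}$ and their differences are $\gg Q^{\eps d/n-1}$-separated. This gap is a power of $Q$ larger than $1/Q$.
\item Cover $\bT^{d+1}$ by cubes of side $Q^{\eps d/n-1}$. Each cube contains $O(1)$ points, and the sum over cubes factorises coordinatewise.
\end{itemize}

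The non-zero frequencies then contribute at most $\lam_{n-d}(\cS)$ times
$\bigl(\sum_{0\le m\le Q}(1+mQ^{\eps d/n})^{-1}\bigr)^{d+1}\ll1$.
The power saving in the gap also absorbs any logarithmic loss, so no $Q^{o(1)}$ obstacle arises.
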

By symmetry, the lemma also holds with
\[
\{ 
q_1 \ba^{(1)} + \cdots + q_{d+1} \ba^{(d+1)}:\bq \in \bZ^{d+1} \mmod \bZ^{n-d}, \: |q_j| \le Q \: \forall j \} \subset \bT^{n-d}
\]
in place of $\cP$.

\begin{proof}
We may assume that $Q$ is arbitrarily large in terms of $n,c,\eps$. Now 
consider the vectors 
\[
\bv_{\bq}=\sum_{j \le d+1} q_j \ba^{(j)}
\mmod \bZ^{n-d},
\]
as $\bq$ varies. 
Let $\bP$ be the probability measure obtained by placing a Dirac mass $Q^{-d-1}$ at $\bv_{\bq}$, for each $\bq$. 
For each $1\leq j\leq n-d$, let $r_j$ be the side length of $\cS$ in direction $j$. 
Let $\varpi$ be a bump function adapted to $\cS$, so that $\varpi \ge 1$ on $\cS$ and \mbox{$\| \varpi \|_1 \le 2\lam_{n-d}(\cS)$,} and the support of $\varpi$ is contained in an axis-aligned rectangle of length $2r_j$ in direction $j$, for each $j$.

By Plancherel, 
\begin{align*}
\# \cS \cap \cP &\le Q^{d+1} \int_{\bT^{n-d}} \varpi(\bx) \d \bP(\bx) = \sum_{\bxi \in \bZ^{n-d}} \hat \varpi(\bxi) \hat \bP(\bxi).
\end{align*}
The Fourier coefficients are given by 
\begin{align}
\notag
\hat \bP(\bxi) &= Q^{-d-1}\sum_{\bx \in \cP} e(- \bxi \cdot \bx) = Q^{-d-1} \sum_{\bq } e(- \bxi \cdot \bv_\bq)\\
\notag &=
Q^{-d-1} \sum_{q_1,\ldots,q_{d+1} =0}^{Q-1} e(-\langle \bxi, q_1 \ba^{(1)} + \cdots + q_{d+1} \ba^{(d+1)} \rangle ) \\
\label{PhatBound}
&= \prod_{j \le d+1} Q^{-1} \sum_{q = 0}^{Q - 1} e( -q \bxi \cdot \ba^{(j)} ) \ll \prod_{j \le d + 1} \min \{ 1,
(Q \| \bxi \cdot \ba^{(j)} \| )^{-1} \}.
\end{align}
By the rapid decay of $\hat \varpi$, 
\begin{align*}
\frac{\# \cS \cap \cP}{Q^{d+1}} 
\ll \lam_{n-d}(\cS) \sum_{ \substack{
\bxi \in \bZ^{n-d} \\
|\xi_j| \le Q^{\eps/n}/r_j \: \forall j }} 
|\hat \bP(\bxi)|.
\end{align*}

Now consider the points
\begin{equation}
\label{points}
(\bxi \cdot \ba^{(1)}, \ldots, \bxi \cdot \ba^{(d+1)}) \in \bT^{d+1},
\end{equation}
for non-zero vectors $\bxi \in \bZ^{n-d}$ with $|\xi_j| \le Q^{\eps/n}/r_j$ for all $j$. 
The assumption \eqref{Cond: D}, applied with
\[
T = \prod_{j \le n-d} (Q^{\eps/n}/r_j),
\]
implies the following gap principle: if $\bz$ is such a point, or the difference between two distinct such points, then
\[
\| \bz \|_{\bT^{d+1}} \gg \lam_{n-d}(\cS)^{\sig_1} Q^{-\eps \sig_1 (n-d)/n}.
\]
Since
$
\lam_{n-d}(\cS) \gg Q^{\eps - 1/\sig_1},
$
it follows that
\begin{equation}
\label{GapPrinciple}
\| \bz \|_{\bT^{d+1}} \gg Q^{\eps d/n - 1}.
\end{equation}

To finish, we cover $\bT^{d+1}$ by cubes of side length $Q^{\eps d/n - 1}$. By our gap principle \eqref{GapPrinciple}, any such cube contains at most $O(1)$ many of the points \eqref{points}. Thus, by \eqref{PhatBound},
\begin{align*}
\sum_{ \substack{
\bzero \ne \bxi \in \bZ^{n-d} \\
|\xi_j| \le Q^{\eps/n}/r_j \: \forall j }} |\hat \bP(\bxi)| 
&\ll \left( \sum_{q = 0}^Q \frac1{1 + qQ^{\eps d/n}} \right)^{d+1}
\ll 1.
\end{align*}
\end{proof}

\subsection{Target-hitting estimate}
\label{TargetHitting}
\begin{lemma}
\label{HitCount}
Let $\cR$ be an axis-aligned rectangle in $[0,1)^n$, with side length $r_j$ in direction $j$ for each $j$, where
\[
0 < r_1 \le \cdots \le r_n \le 1.
\]
Assume that $r_{d+1} =(r_{d+1} \cdots r_n)^{\sig_2}$. Let $A,\bb$ satisfy \eqref{Cond: D} whenever we have \eqref{whenever}, where
\[
c > 0, \qquad
1 \le \sig_1 \le n.
\]
Let $Q \in \bN$ and $\eps > 0$ be such that 
\[
\lam_n(\cR)^{\chi} \ge \eps Q^{\eps - 1/\sig_1},
\]
for $\chi = 1/(1+d\sig_2)$.
Then
\[
\lam_d( \{
\bt \in \fB: \exists q \in [Q,2Q] \quad (q \bt, q (A \bt + \bb))\mmod \mathbb{Z}^n \in \cR  \}) \ll_{n,c,\eps,A,T}
Q \lam_n(\cR),
\]
for any ball $\fB \subset \bR^d$ of radius
$T \ge 2$ centred at the origin. 
\end{lemma}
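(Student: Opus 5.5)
We follow the strategy sketched in the outline. Write $\cR = \cR_1 \times \cR_2$, where $\cR_1 \subset [0,1)^d$ has sides $r_1,\ldots,r_d$ and $\cR_2 \subset [0,1)^{n-d}$ has sides $r_{d+1},\ldots,r_n$. Fix $q \in [Q,2Q]$ and decompose $\fB$ into cells
\[
\wp_{\bm} = q^{-1}(\bm + [0,1)^d), \qquad \bm \in \bZ^d, \quad |\bm| \ll TQ.
\]
On $\wp_{\bm}$ we write $\bt = (\bm + \bs)/q$ with $\bs \in [0,1)^d$. Then $q\bt \equiv \bs \bmod \bZ^d$, and
\[
q(A\bt+\bb) \equiv A\bs + A\bm + q\bb \bmod \bZ^{n-d}.
\]
The condition $q\bt \in \cR_1$ therefore confines $\bs$ to $\cR_1$, a set of $\bt$-measure $q^{-d}\lam_d(\cR_1)$ inside $\wp_{\bm}$. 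For such $\bs$ we have $|A\bs - A\bs_0| \ll_A r_d \le r_{d+1}$, where $\bs_0$ is the corner of $\cR_1$. Consequently the second condition forces $\bp := A\bm + q\bb$ to lie in a dilate $\cS$ of $\cR_2 - A\bs_0$, whose sides are $\asymp_A r_{d+1},\ldots,r_n$. Summing over $q$ and $\bm$, the measure in question is
\[
\ll Q^{-d}\lam_d(\cR_1) \cdot \#\{(\bm,q): |\bm| \ll TQ,\ q \in [Q,2Q],\ A\bm + q\bb \in \cS \bmod \bZ^{n-d}\}.
\]
This is a union bound over $q$, which is acceptable since the target is $Q\lam_n(\cR)$.

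The remaining count is handled by Lemma~\ref{equi}, in the symmetric form with $|q_j| \ll TQ$ (after splitting the ranges into $O_T(1)$ boxes of side $Q$). It gives
\[
\#\{\cdots\} \ll_{T} \lam_{n-d}(\cS)\, Q^{d+1} \asymp \lam_{n-d}(\cR_2)\, Q^{d+1},
\]
and hence the total bound $Q^{-d}\lam_d(\cR_1)\lam_{n-d}(\cR_2)Q^{d+1} = Q\lam_n(\cR)$, as required.

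The main work is verifying the hypotheses of Lemma~\ref{equi} for $\cS$; this is where $\chi$ and the normalisation $r_{d+1} = (r_{d+1}\cdots r_n)^{\sig_2}$ enter. Each side of $\cS$ is at least $\gg r_{d+1} = \lam_{n-d}(\cR_2)^{\sig_2}$, which is the side-length condition. For the volume condition, since $r_1,\ldots,r_d \le r_{d+1}$, we have
\[
\lam_n(\cR) \le r_{d+1}^d\,\lam_{n-d}(\cR_2) = \lam_{n-d}(\cR_2)^{1+d\sig_2},
\]
so $\lam_{n-d}(\cR_2) \ge \lam_n(\cR)^{\chi} \ge \eps Q^{\eps - 1/\sig_1}$, exactly the threshold in Lemma~\ref{equi}. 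The constraint $\sig_2 \in [1/(n-d),1]$ follows automatically from $r_{d+1}$ being the smallest of the last $n-d$ sides. Enlarging $\cS$ by constant factors only affects constants.

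Some bookkeeping remains: $\bs_0$ depends only on $\cR$, not on $\bm$, so a single rectangle $\cS$ serves all cells. Cells $\wp_{\bm}$ meeting the boundary of $\fB$ are harmless, since we only need an upper bound. The dependence on $A$ and $T$ (the number of boxes, and the dilation factor $\|A\|$) is absorbed into the implied constant. The step I expect to be delicate is ensuring that the dilated rectangle still satisfies the side-length hypothesis with the same $\sig_2$. This only costs constants depending on $A$, so I would simply enlarge each side by the factor $1+\|A\|$ and adjust $\eps$ accordingly.
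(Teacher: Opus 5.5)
Your proposal is correct and is essentially the paper's proof. It uses the same cell decomposition indexed by $(\bm,q)$, the same observation that $A\bm+q\bb$ must lie (mod $\bZ^{n-d}$) in a rectangle $\cS\supseteq \cR_2 - A\cR_1$ with sides $\asymp_A r_{d+1},\ldots,r_n$, and the same verification of the hypotheses of Lemma~\ref{equi} via $\lam_n(\cR)\le \lam_{n-d}(\cR_2)^{1+d\sig_2}$ and $r_{d+1}=\lam_{n-d}(\cR_2)^{\sig_2}$. Your bookkeeping is, if anything, a little more explicit than the paper's: you compute the exact measure $q^{-d}\lam_d(\cR_1)$ per cell, translate boxes so Lemma~\ref{equi} covers the range $|\bm|\ll TQ$, and shrink $\eps$ to absorb the $A$-dependent constants.
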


\begin{proof}
Define the projection
\begin{align*}
\pi: [0,1)^n &\to \bR^{n-d} \\
(\bx,\by) &\mapsto \by - A \bx.
\end{align*}
Then $\pi(\cR) \mmod \mathbb{Z}^{n-d} \subseteq \cS$, for some axis-aligned rectangle $\cS \subseteq \mathbb{T}^{n-d}$ where, for each coordinate $d+1 \le j \le n$, the side length of $\cS$ in direction $j$ is at least $r_j$ and $O_A(r_j)$.

For $q = Q, Q+1, \ldots, 2Q$, let
\[
\cL_q = \{ (q\bt, q(A\bt + \bb)): \bt \in \fB \} \subseteq \bR^n.
\]
Let $I_d$ be the $d$-dimensional identity matrix.
We cover $\cL_q$ by a union of small regions
\[
\wp = \left \{ 
\begin{pmatrix}
\bm \\ \bp
\end{pmatrix}
+ \begin{pmatrix}
I_d \\
A
\end{pmatrix} 
\bz: \bz \in [0,1)^d \right \},
\]
where $\bm \in \bZ^d$ and
\begin{align*}
\bp &= A\bm + q \bb
\\ &\in \tilde \cP :=
\left \{ 
q_1 \ba^{(1)} + \cdots + q_d \ba^{(d)} + q_{d+1} \bb:
-2TQ \le q_1, \ldots, q_{d+1} \le 2TQ
\right \}.
\end{align*}

Let $N$ be the total number of these small regions $\wp$ such that $\wp \mmod \mathbb{Z}^n$ intersects $\cR$,
as we range over $q=Q,Q+1,\ldots,2Q$. The key observation is that if $\wp$ intersects $\cR + \bZ^n$ then its starting point $\begin{pmatrix} \bm \\ \bp \end{pmatrix}$ lies in 
\[
(\{ \bzero \} \times \pi(\cR)) + \bZ^n.
\]
Therefore 
\[
N \ll_T \# ((\pi(\cR) \mmod \mathbb{Z}^{n-d}) \cap \cP)\le \# \cS \cap \cP,
\]
where
$\cP = \tilde \cP \mmod \bZ^{n-d}$.

Observe that
\[
r_1 \cdots r_n = (r_1 \cdots r_d) (r_{d+1} \cdots r_n) \leq (r_{d+1} \cdots r_n)^{d\sig_2} r_{d+1} \cdots r_n,
\]
whence
\[
r_{d+1} \cdots r_n \geq (r_1 \cdots r_n)^{\chi}.
\]
Note also that
\[
\frac1{n-d} \le \sig_2 \le 1.
\]
Since
\[
\lam_{n-d}(\cS) \gg r_{d+1} \cdots r_n \ge (r_1 \cdots r_n)^\chi \gg Q^{\eps - 1/\sig_1}
\]
and
\[
r_{d+1} = (r_{d+1} \cdots r_n)^{\sig_2} \gg \lam_{n-d}(\cS)^{\sig_2},
\]
we may apply Lemma \ref{equi}, giving
\[
N \ll \lam_{n-d}(\cS) Q^{d+1} \ll r_{d+1} \cdots r_n Q^{d+1}.
\]

For each $\wp$, the set of $\bt \in \fB$ such that
\[
(q\bt, q(A\bt + \bb))\mmod \mathbb{Z}^n \in (\wp \mmod \mathbb{Z}^{n}) \cap \cR 
\]
is contained in a union of $O_T(1)$ many rectangles with side lengths
\[
O(r_1/q), \ldots, O(r_d/q).
\]
Therefore
\begin{align*}
&\lam_d ( \{
\bt \in \fB: \exists q \in [Q,2Q] \quad (q \bt, q (A \bt + \bb)) \mmod \bZ^n \in \cR \}) \\
&\le Q^{-d} r_1 \cdots r_d  N
\ll Q \lam_n(\cR).
\end{align*}
\end{proof}

\subsection{Completing the proof of Theorem \ref{FlatConv}}
\label{completing}

In this subsection, we prove Theorem \ref{FlatConv}. Let 
\[
\gam \in \left[ \frac{n-d}{n}, 1 \right)
\]
and $c > 0$ be as in Hypothesis D. Let
\[
\cG = \left \{ (\gam(1+d\sig_2), \sig_2):
\frac1{n-d} \le \sig_2 \le 1 \right \}.
\]
By assumption, if $(\sig_1, \sig_2) \in \cG$ then $D(c, \sig_1, \sig_2)$ holds.
Let
\[
0 < \eps < 1 - \gam.
\]

Let $\psi: \bN \to [0,1)$ be monotonic, with
\[
\sum_{q=1}^\infty \psi(q) (\log q)^{n-1} < \infty.
\]
We claim we may assume that for $(\sig_1,\sig_2) \in \cG$, 
\[
\psi(q)^{1/(1+d\sig_2)} > q^{(\eps - 1)/\sig_1} \qquad (q \in \bN).
\]
To see this, note that if $(\sig_1, \sig_2) \in \cG$ then
\[
(1 - \eps) \frac{1 + d \sig_2}{\sig_1} = \frac{1 - \eps}{\gam} > 1.
\]
Hence, replacing $\psi(q)$ by $\tilde \psi(q) := \psi(q) + q^{(\eps - 1)/\gamma}$ yields a monotonic function such that
\[
\cW_n^\times(\tilde \psi) \supseteq
\cW_n^\times(\psi),
\qquad
\sum_{q=1}^\infty \tilde \psi(q) < \infty.
\]
This justifies the claim.

\bigskip

Let $\mu_d$ be the induced Lebesgue measure on $\cL$. Let $T \ge 2$, and let $\fB \subset \bR^d$ be the ball of radius $T$ centred at the origin. 
By symmetry, it suffices to prove that
$
\mu_d(\cW) = 0,
$
where $\cW$ is the set of 
$
\bx \in \fB \times \bR^{n-d}
$
such that 
\[
\| qx_1 \| \cdots \| qx_n \| < \psi(q), \qquad
\| qx_1 \| \le \cdots \le \| qx_n \| 
\]
has infinitely many solutions $q \in \bN$.
For $Q = 1, 2, 4, 8, \ldots$, we consider the hyperbolic region
\[
\cT_Q = \{ \bx \in \bT^n: \| x_1 \| \cdots \| x_n \| \le \psi(Q), \quad
\| x_1 \| \le \cdots \le \| x_n \| \}.
\]
We have
\[
\cW \subseteq \limsup_{Q \in \{ 1, 2, 4, 8, \ldots \}} \tilde \cE_Q,
\]
where
\[
\tilde \cE_Q = \{ ( \bt, A\bt + \bb): \bt \in \cE_Q \},
\]
wherein
\[
\cE_Q = \bigcup_{Q \le q \le 2Q} \{ \bt \in \fB: (q\bt, q(A\bt + \bb)) \mmod \bZ^n \in \cT_Q \}.
\]

We can cover $\cE_Q$ by $O((\log Q)^{n-1})$ many sets of the form
\[
\cE = \cE(\cR) = \bigcup_{Q \le q \le 2Q} \{ \bt \in \fB: (q\bt, q(A\bt + \bb)) \mmod \bZ^n \in \cR \}.
\]
Here $\cR \subseteq \bT^n$ is an axis-aligned rectangle, with side length $r_j$ in direction $j$ for each $j$, where
\[
0 < r_1 \le \cdots \le r_n \le 1,
\]
and
$
\lam_n(\cR) \asymp \psi(Q).
$
We apply Lemma \ref{HitCount} with $(\sig_1, \sig_2) \in \cG$ given by
\[
r_{d+1} = (r_{d+1} \cdots r_n)^{\sig_2}, \qquad
\sig_1 = \gam(1 + d \sig_2).
\]
This furnishes
$
\lam_d(\cE) \ll Q \lam_n(\cR) \ll Q \psi(Q),
$
and so
\[
\lam_d(\cE_Q) \ll Q \psi(Q) (\log Q)^{n-1}.
\]
The Cauchy condensation test now delivers
\[
\sum_{Q=1,2,4,8,\ldots} 
\mu_d(\tilde \cE_Q) \ll
\sum_{Q=1,2,4,8,
\ldots} \lam_d(\cE_Q) < \infty.
\]
Thus, by the first Borel--Cantelli lemma,
\[
\mu_d(\cW) \le \mu_d \left( \limsup_{Q \in \{ 1,2,4,8,\ldots\} } 
\tilde \cE_Q \right) = 0,
\]
completing the proof of Theorem \ref{FlatConv}.

\begin{rem} Our approach can also handle simultaneous approximation. The outcome is similar to Huang's Khintchine-type convergence theory for affine subspaces \cite{Hua2024}.
\end{rem}

\begin{rem}
\label{KleinbockRemark}
Let us revisit the case $d = n-1$ of affine hyperplanes, to discuss the situation where some of the parameters vanish. Suppose $\cL$ has Monge form
\[
(t_1, \ldots, t_d, \ba \cdot \bt + b),
\]
and that
\[
a_1 \cdots a_r \ne 0, \qquad
a_{r+1} = \cdots = a_d = 0.
\]
Then, upon changing the order of the coordinates, our affine hyperplane $\cL$ has the form $L \times \bR^{n-r-1}$, where $L$ is the affine hyperplane in $\mathbb{R}^{r+1}$ with Monge form
\[
(t_1, \ldots, t_r,
a_1 t_1 + \cdots + a_r t_r + b).
\]
The target-hitting problem is then reduced to the corresponding problem in $\mathbb{R}^{r+1}.$ There, we find that if
\[
\|q(a_1,\dots,a_r,b)\| \gg_\sig q^{-\sigma}
\qquad (q \in \bN)
\]
holds for some $\sigma < r+1$, then the corresponding target-hitting estimate holds. This gives rise to the convergence Gallagher property for $L\times \bR^{n-r-1}$ and hence $\cL$. The condition that $\sig < r+1$ matches Kleinbock's \cite[Corollary 5.7]{Kle2003}.
\end{rem}

\section{Proof of Theorem \ref{thm: main curves}}\label{sec: proof of curves}
\subsection{A model class of curves}
\label{subsec model class}
For sufficiently smooth curves in 
$\mathbb{R}^n$ given by \eqref{eq: curve}, the non-degeneracy condition is equivalent to the invertibility of the Wronskian of $f_1', f_2', \ldots, f_n'$, given by
\begin{equation}
\label{def wronsk}
W_{\bf}(\dx):=
\begin{pmatrix}
f_1'(\dx) & f_2'(\dx) & \ldots & f_n'(\dx)\\
f_1^{(2)}(\dx) & f_2^{(2)}(\dx) &\ldots & f_n^{(2)}(\dx)\\
\vdots & & & \vdots\\
f_1^{(n)}(\dx) & f_2^{(n)}(\dx) &\ldots & f_n^{(n)}(\dx)
\end{pmatrix},
\end{equation}
uniformly over $\dx\in U$, as Proposition 2.13 of 
Beresnevich and Yang \cite{BY2023} shows.

In the discussion ahead, we shall work with smooth variants of counting functions for rational points near $\cC$. 
To pass from the usual counting function to these smooth variants, we need flexibility in the parametrisation 
of the curve $\cC$. More specifically, for $\bdel\in \cD(t)$, let $j\leq n$ be such that $\delta_{\textrm{min}}=\delta_{j}$. 
Then we mandate that for any $1\leq i\leq n$, the curve $\cC$ be expressible in the Monge form 
$$\cC = \{ \dx \mathbf{e}_{i}+\mathbf{G}_{i}(\dx): \dx\in I_i\subset U_i, \quad \mathbf{G}_{i}: U_i\to \mathbf{e}_{i}^{\perp} \textrm{ is smooth}\},$$
where $I_i$ is a closed interval contained in an open set 
$U_i\subset \mathbb{R}$. In other words,  for any $i$ it should be possible to change variables 
from $x$ to $f_i(x)$. For this, we need non-zero lower bounds on 
$\inf_{\dx\in \I}|f_i'(\dx)|$. While they may not necessarily be available on the entire interval $\I$, for every $\rho>0$, 
we can still extract a set $\I_{\rho}\subset \I$ 
where this is true and the measure $\lam_1(\I\setminus \I_{\rho})$ is small.
Such reparametrisations are almost surely possible, as we see next.

\begin{lemma}
\label{lem red model setting}
Let $U\subset\mathbb{R}$ be an open interval containing a closed interval $\I$. Let $\bfg = (g_1, \ldots, g_n): U\rightarrow \R^n$ be a smooth, $l$-non-degenerate map. For every sufficiently small $\rho >0$, there exists a set $\I_{\rho}\subset \I$ which can be decomposed as
\begin{equation}
\label{eq Ieps sum}
\I_{\rho}=\bigsqcup_{j=1}^{K} I_j,
\end{equation}
where $I_j$ are closed intervals, with the following properties:
\begin{enumerate}[(i)]
\item
For all $1\leq i\leq n$ and any $\dx\in I_j$, we have
\begin{equation*}
    \label{eq I eps deriv}
    \rho \leq |g_{i}'(\dx)| \leq C_{\bf}.
\end{equation*}
\item
The set $\I_{\rho}$
is nearly all of $\I$, in the sense that
\begin{equation*}
    \label{meas I eps}
    \lam_1(\I\setminus \I_{\rho})\ll_\bf \rho^{1/(n-1)}.
\end{equation*}
\end{enumerate}

In particular, for each $1\leq j\leq K$, there exists an open set $U_j\subset U$ containing $I_j$ such that $g_i$ restricted to $U_j$ 
is a smooth, invertible map with inverse $g_i^{-1}$ for any $1\leq i\leq n$, and
\begin{equation*}
\label{eq inv deriv est}
\sup_{y \in U_j}|\left(g_i^{-1}\right)^{(N)}(y)| \ll_N \rho^{-N}
\end{equation*}
for each $N\geq 1$. 
\end{lemma}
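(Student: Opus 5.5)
The plan is to control, for each $i$, the sub-level set $\{\dx\in\I: |g_i'(\dx)|<\rho\}$ using non-degeneracy, and to take $\I_\rho$ to be the complement of the union of these sets. The one-dimensional sub-level set estimate we need is the following. Suppose $h=g_i'$ satisfies $\max_{1\le k\le m}|h^{(k)}(\dx)|\ge c>0$ on $U$, i.e.\ some derivative of order at most $m$ is bounded below. Then
\[
\lam_1(\{\dx\in\I:|h(\dx)|<\rho\})\ll \rho^{1/m}.
\]
This is the classical van der Corput / Pyartli type lemma. It follows by covering $\I$ with finitely many intervals, on each of which a fixed derivative $h^{(k)}$ has modulus $\ge c/2$, and then applying the standard estimate $\lam_1\{|h|<\rho\}\ll (\rho/c)^{1/k}$.

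The non-degeneracy input comes from the model class. As recalled above via Beresnevich--Yang Proposition 2.13, the Wronskian $W_{\bfg}$ of $g_1',\dots,g_n'$ is invertible uniformly on $U$, with $|\det W_{\bfg}|\ge c_0>0$ (shrinking $U$ slightly to keep compactness). Fix $i$. The column $(g_i',g_i'',\dots,g_i^{(n)})^T(\dx)$ of $W_{\bfg}$ cannot vanish. Since the entries of $W_{\bfg}$ are bounded by $C_{\bfg}$, expanding the determinant along this column gives
\[
\max_{0\le k\le n-1}|g_i^{(k+1)}(\dx)|\gg c_0 C_{\bfg}^{-(n-1)} .
\]
We want the exponent $1/(n-1)$, which means the derivative of order $0$ of $h=g_i'$ should not be the one carrying the lower bound.

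So we split into two cases. At points where $|g_i'|\ge c'$ for a suitable small constant $c'$, the set is excluded automatically once $\rho<c'$. Otherwise the lower bound is carried by one of $h',\dots,h^{(n-1)}$, and the sub-level lemma with $m=n-1$ gives measure $\ll\rho^{1/(n-1)}$. This is carried out locally near each point by compactness, giving finitely many pieces. Summing over $i\le n$ yields (ii). The upper bound in (i) is just $\sup_U|g_i'|\le C_{\bfg}$.

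For the decomposition \eqref{eq Ieps sum}, let $\I_\rho$ be the closure-adjusted set $\{\dx\in\I:|g_i'(\dx)|\ge\rho\ \forall i\}$. The difficulty is that this is a finite union of closed intervals only if each $g_i'-\rho$ and $g_i'+\rho$ has finitely many zeros. The sub-level argument in fact shows that the number of monotonicity intervals of $g_i'$ is bounded, since some derivative of order $\le n$ is nonvanishing locally. Hence $\{|g_i'|\ge\rho\}$ is a union of $O_{\bfg}(1)$ closed intervals. We then intersect over $i$ and discard degenerate (point) components, which have measure zero, so $K\ll_{\bfg}1$. On each $I_j$ we take a slightly larger open $U_j$ where $|g_i'|\ge\rho/2$ by continuity. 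There $g_i$ is strictly monotone, hence invertible.

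For the inverse derivative bound, we differentiate $g_i^{-1}$ via the Faà di Bruno / Lagrange inversion formula. The $N$-th derivative $(g_i^{-1})^{(N)}(y)$ is a polynomial in $g_i^{(k)}$, $k\le N$, divided by $(g_i')^{2N-1}$. A more careful count gives the bound $\rho^{-N}$ in the form stated: induction on $N$ shows that each term has the form $\prod g_i^{(k_l)}/(g_i')^{N+\sum(\ldots)}$. Weighting by homogeneity, the worst-case power is $(g_i')^{-(2N-1)}$, i.e.\ $\ll_N\rho^{-(2N-1)}$. If the stated $\rho^{-N}$ is intended, one must check the exponent. I expect this bookkeeping, along with the finiteness of the interval decomposition, to be the main obstacle. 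If needed, I would replace $\rho$ by $\rho^{1/2}$-type scaling, or note that the downstream use only requires a polynomial loss in $\rho^{-1}$.
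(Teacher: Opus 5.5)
Your proof of (i), (ii) and the interval structure is essentially the paper's argument in Appendix~B. Both use the Wronskian column bound and a compactness cover of $\I$ by intervals on which a fixed $g_i^{(\kappa)}$ is bounded below. Both note that the $\kappa=1$ pieces miss $\{|g_i'|<\rho\}$ once $\rho$ is small, and then apply van der Corput with $\ell=\kappa-1\le n-1$.

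Your local Rolle count for finiteness is actually the better route. The paper argues globally that $g_i^{(n)}$ has finitely many zeros, via a limit-point argument. That step does not work: a limit point of zeros of $g_i^{(n)}$ only forces $g_i^{(n)},g_i^{(n+1)},\dots$ to vanish there, not $g_i',\dots,g_i^{(n-1)}$.

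One correction to your version: $g_i'$ need not have boundedly many monotonicity intervals. On a $\kappa=1$ piece, $g_i''$ may vanish infinitely often; a flat oscillating perturbation such as $g_i'=1+\varepsilon e^{-1/x^2}\sin(1/x)$ is compatible with non-degeneracy. What you need, and what your case split already gives, is this:
\begin{itemize}
\item on the $\kappa=1$ pieces, $|g_i'|>\rho$ throughout;
\item on pieces with $\kappa\ge2$, each of $g_i'\mp\rho$ has at most $\kappa-1$ zeros.
\end{itemize}

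On the inverse bound, your bookkeeping is right, and the stated $\rho^{-N}$ is not attainable in general. Take $\bfg(x)=(x,x^2)$ with $0$ in the interior of $\I$. Then $(g_2^{-1})''=-g_2''/(g_2')^3=-1/(4x^3)$, which equals $-2/\rho^3$ at the endpoints $x=\pm\rho/2$ of the $I_j$. Forcing this below $\rho^{-2}$ would require removing $|x|\lesssim\rho^{2/3}$, which violates (ii), since for $n=2$ it allows only $O(\rho)$.

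The paper's appendix does not address this clause at all. The true bound is $\ll_N\rho^{-(2N-1)}$, and that is all the later argument needs: $\rho$ is fixed before the counting argument, and every later constant is allowed to depend on it. So your second remedy (accept a polynomial loss in $\rho^{-1}$) is the right one. The $\rho^{1/2}$ rescaling would weaken (ii) to $\rho^{1/(2(n-1))}$.
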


The proof of this is a standard application of harmonic analysis, specifically sub-level estimates of van der Corput type. We defer it to Appendix~\ref{app proof lemma}.

\begin{rem}
Fixing $\rho>0$, let $I_j$ and $\I_\rho=\bigsqcup_{j=1}^{K} I_j$
be as in Lemma \ref{lem red model setting} with $\bfg=\bf$.
Suppose we had
established Theorem \ref{thm: main curves}
on each $I_j$ separately and shown that
$$\mu_1(\cC_j \cap\cW^{\times}_n(\psi))=0,$$
where $\cC_j:=\{\mathbf{f}(\dx): \dx\in I_j\}$.
Then it follows from \eqref{meas I eps} that 
$$
\mu_1(\cC \cap\cW^{\times}_n(\psi))
\ll_\bf \rho^{1/(n-1)}
+ \sum_{1\leq j\leq K} \mu_1(\cC_j \cap\cW^{\times}_n(\psi))
=\rho^{1/(n-1)}.
$$
Theorem \ref{thm: main curves}
then follows readily by sending $\rho\to 0^+$. 
Therefore, without loss of generality, 
we can and shall assume henceforth that 
\begin{equation}
\label{eq rho assump}
\min_{1\leq i\leq n}\inf_{\dx\in U} |f_i'(\dx)|>\rho, 
\qquad  \max_{1\leq i\leq n}\sup_{y\in f_i(U)}|
\left(f_i^{-1}\right)^{(N)}(y)|
\ll_N \rho^{-N}.
\end{equation}
Letting $f_0$ denote the identity function on $\R$, we define
\begin{equation}
\label{def L and Crho}
L= 10 C_\rho, \quad
\mathrm{where}
\quad C_\rho=
\max_{0\leq i, j\leq n }
\sup_{y\in f_i(U)}
|(f_j\circ f_i^{-1})'(y)|.
\end{equation}
These quantities plays an key role 
in smoothing the counting functions later. 
We record them here to make their dependency on 
$\rho, \bf$ transparent and do the same for terms
depending on them. 
For instance, the dyadic cut-off parameter
$\Xi$, to be introduced in due course,
is to be chosen in terms of $L$, and all constants below are allowed to depend on $\rho$ and $L$.
\end{rem}

\subsection{Simplification to \texorpdfstring{$\psi$}{psi} decreasing with a rate}
\label{subsec psi simp}
To work with multiplicative 
Diophantine approximations effectively, 
we make a standard reduction 
in this section, repackaging the
multiplicative approximation 
conditions into a collection of 
simultaneous approximation conditions 
involving 
anisotropically-scaled dyadic rectangles. 
Concretely, we cover the 
hyperbolic region by a collection of dyadic 
rectangles.
Naturally, the cardinality of those rectangles
is an appropriate logarithm power. 
The cuspidal regions 
play a special role, and we shall 
now proceed to show how to truncate
their contribution suitably. 
\begin{rem}
The series \eqref{eq: series of measures} converges for
$$
\psi_{\theta}(q)= \frac{1}{q^{1 + \theta}}
$$
whenever $\theta > 0$.
We claim that, 
for a small but fixed $\theta>0$, 
we can assume  
\begin{equation}\label{eq lower bound psi}
\psi(q) \geq \frac{1}{q^{1 + \theta}},
\end{equation}
Indeed, otherwise we replace $\psi$
with $\psi'(q):=\max (\psi(q),\psi_{\theta}(q))$
which still satisfies the convergence 
condition \eqref{eq: series of measures}.
Since $\cW^{\times}_n(\psi) \subseteq 
\cW^{\times}_n(\psi')$, it suffices to prove
Theorem \ref{thm: Gallagher conv}
with $\psi'$ 
in place of $\psi$. This confirms the claim.
\end{rem}

By Cauchy's condensation test,
\begin{equation}
\sum_{t\geq1}2^{t}t^{n-1}\psi(2^{t})<\infty.\label{eq: divergence dyadic}
\end{equation}
Given $q\in [2^t,2^{t+1}]$, 
we put each of the $n$ factors in 
the product appearing in
\eqref{def mult app cond}
into a dyadic range.
To this end, we define 
\begin{equation*}
\label{def dyadic ranges unpruned}
\widetilde{\cD}(t):=\left\{(2^{-z_{1}},\ldots,2^{-z_{n}})
:\begin{array}{c}
z_{1},\ldots,z_{n}\in\Z_{\geq 1},\\
\prod_{i\leq n} 2^{-z_i} \leq \psi(2^{t})
\leq 2
\prod_{i\leq n} 2^{-z_i}
\end{array}\right\}
\end{equation*}
where $\Z_{\geq m} = \Z\cap [m,\infty)$ for any $m \ge 1$.
It should be no surprise that the main
difficulty lies in working with 
\begin{equation*}
\label{def dyadic ranges}
\cD(t):=\left\{(2^{-z_{1}},\ldots,2^{-z_{n}})
:\begin{array}{c}
z_{1},\ldots,z_{n}\in\Z_{\geq \Xi},\\
\prod_{i\leq n} 2^{-z_i} \leq \psi(2^{t})
\leq 2
\prod_{i\leq n} 2^{-z_i}
\end{array}\right\},
\end{equation*}
where $\Xi \geq 1$ is a large real number. 
For concreteness, if $L$ as in \eqref{def L and Crho} 
is given, then 
\begin{equation}\label{def Xi}
\Xi:= 10 + \frac{\log L}{\log 2}
\end{equation}
is an acceptable choice. Morally speaking, 
if any of the $z_i$ is smaller than a fixed constant
then we are dealing with a 
lower-dimensional approximation problem.
The multiplicative convergence theory for planar curves, i.e. the case $n=2$, was treated
by Badziahin and Levesley 
\cite[Theorem 1]{BL2007}.
We will induct 
on the dimension to 
handle such $\bz$.

\begin{rem}
Let $(2^{-z_{1}},\ldots,2^{-z_{n}})\in
\widetilde{\cD}(t)$. In view of 
\eqref{eq lower bound psi},
we have 
\begin{equation}
\label{ziAlt}
\prod_{i\leq n} 2^{-z_i}
\geq 
\frac{1}{2}\psi(2^{t}) \geq \frac{1}{2}2^{-(1+\theta)t}
\geq 2^{-(1+\theta)t-1}.
\end{equation}
Thus, 
\begin{equation}
    \label{eq up bd zi}
    \max_{1\leq i\leq n} z_i \leq
    \sum_{1\leq i\leq n} z_i \leq (1+\theta)t+1,
\end{equation}
which will be useful later.
Consequently,
\begin{equation}
\#\cD(t)\leq \# \widetilde{\cD}(t)
\ll t^{n-1}
\label{eq: size dyadic exponents}.
\end{equation}
These cardinality bounds are
in tune with the term $t^{n-1}$ occurring 
in \eqref{eq: divergence dyadic}.
\end{rem}

\subsection{A Counting Problem}
\label{sub sec counting prob}
Theorem \ref{thm: main curves} 
will be a consequence of an estimate 
for the number of 
rational points contained in certain 
anisotropic neighbourhoods 
of subsets of $\cC$. 
For $t\in \mathbb{N}$, 
$\bdel\in (0, 1)^{n}$ 
and $\cG\subseteq \I$, 
we define the counting function
\begin{align*}
 \cR(\cG, \bdel, t) =
 \# \left\{(\ba, q)\in \mathbb{Z}^{n+1}:
2^t \leq q<2^{t+1},
\quad \exists \dx\in \cG \textrm{ with } \eqref{eq proximity cond}
\right\},   
\end{align*}
where \eqref{eq proximity cond}
is the collection of the $n$
inequalities
\begin{equation}\label{eq proximity cond}
    \Big\vert f_i(\dx)-\frac{a_i}{q}\Big\vert
<\frac{\delta_i}{q} \textrm{ for } 1\leq i\leq n.
\end{equation}
This is to say that $\cR(\cA, \bdel, t)$ counts rational points with denominator of size $2^t$ contained in a non-isotropic neighbourhood of the portion of the curve $\cC$ parametrised over a subset $\cG$ of $\I$. 

Theorem \ref{thm: main curves} will follow from the following theorem which achieves two objectives: count rational points in a neighbourhood of a `good' portion of the curve $\cC$, and estimate the measure of the `bad' portion where such a result is out of reach, depending on the parameters $\bdel$ and $t$.
Previously in \cite{SST}, a similar strategy 
was used for the Khintchine-type convergence theory. Here, the neighbourhoods 
of the curve in which the rational points need to be counted are boxes with widely different side lengths, 
rather than hypercubes. This introduces new difficulties and forces us to revisit the method and
introduce a couple of new ideas. Firstly, we discretise the possible side lengths into 
logarithmically many dyadic choices, possibly reparametrising so that the first coordinate is
placed where the smallest side length is. Secondly, the usage of the non-divergence estimates 
will need to be sensitive to different parameters (in fact the size of the smallest side).
Thirdly, we carefully inflate the dyadic scales --- should they be too small --- 
to a size in which we can just handle the 
dyadic rectangles. 

\begin{theorem}\label{thm: prob count}
Let $\rho>0$ be as in \eqref{eq rho assump}. For every integer $t\geq 1$ and any $\bdel\in \cD(t)$, there exists a set $\cA_\bdel\subset \I$
such that: 
\begin{enumerate}[(i)]
\item
\begin{equation}
\label{eq: convergence of measures}
\sum_{t\geq 1}\sum_{\bdel\in \cD(t)} \lam_1(\cA_{\bdel}) < \infty.
\end{equation}
\item
Writing 
\begin{equation*}
\label{def bdel times}
\bdel^{\times} = \frac{\prod_{1\leq i\leq n} \delta_i}{\min_{1\leq i\leq n} \delta_i},
\end{equation*}
we have
\begin{equation*}
\label{eq Bdel count}
\#\cR(\I\setminus \cA_{\bdel}, 4\bdel, t) \ll_{\rho} 2^{2t}\bdel^{\times},
\end{equation*}
where the implicit constant is uniform in $\bdel=(\delta_1, \ldots, \delta_n) \in \cD(t)$ and $t\geq 1$.
\end{enumerate}
\end{theorem}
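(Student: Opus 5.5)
We will follow the outline of \S\ref{sec: outline}, adapting the Khintchine-type argument of \cite{SST} to non-isotropic boxes. Fix $t$ and $\bdel\in\cD(t)$, and let $j$ be an index with $\delta_{\min}=\delta_j$. We first use \eqref{eq rho assump} to reparametrise $\cC$ in the Monge form adapted to the $j$-th coordinate, so that $x\mapsto f_j(x)$ becomes the parameter. The condition $|f_j(x)-a_j/q|<4\delta_j/q$ then pins $x$ to within $O(\delta_j/q)$ of $a_j/q$. Consequently the count $\#\cR(\cG,4\bdel,t)$ is bounded, up to an $L$-dependent inflation of the box (this is where $\Xi$ from \eqref{def Xi} enters), by a smoothed sum
\[
\sum_{2^t\le q<2^{t+1}}\sum_{a_j}\omega\!\left(\tfrac{a_j}{q}\right)\prod_{i\ne j}\chi\!\left(\tfrac{q\,G_i(a_j/q)-a_i}{\delta_i}\right),
\]
where $\chi$ is a smooth bump and $\omega$ is a smooth cutoff to $\I\setminus\cA_\bdel$. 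Poisson summation in the $a_i$ ($i\neq j$) produces the main term, the zero frequency, of size $2^t\cdot 2^t\prod_{i\ne j}\delta_i=2^{2t}\bdel^\times$, which is exactly the required bound. It also produces oscillatory terms indexed by $\bxi\in\bZ^{n-1}$ with $|\xi_i|\lesssim 2^{\eps t}/\delta_i$. A second Poisson summation in $(q,a_j)$ turns these into Fourier transforms of the surface measure of the cone $(y,x)\mapsto y(1,\bf(x))$, at frequencies $(\bxi,\cdot)$, weighted by $\omega$.

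We then construct $\cA_\bdel$ as a sub-level set, following \S\ref{sub-levelFA}. The oscillatory integral lacks rapid decay only near points $x$ where the phase $\xi_0+\bxi\cdot\bG(x)$ and its first derivative are simultaneously small relative to the scales $2^{-t}$ and $\delta_i$. This happens precisely when the lattice $g_{\bdel,t}u_x\bZ^{n+1}$ has a short vector, where $u_x$ is the unipotent built from $(\bf(x),\bf'(x))$ and $g_{\bdel,t}$ is a diagonal flow with weights $2^{t}$, $\delta_i^{-1}$ and $2^{-t}\delta_j^{-1}$ (or similar). We define $\cA_\bdel$ to be the set of $x$ where this lattice has a nonzero vector of norm less than $2^{-\kappa t}$, for a small $\kappa>0$. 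The set is then smoothed, using bump functions at scale $2^{-\kappa' t}$ whose derivatives obey the blow-up estimates \eqref{eq r rough est} and \eqref{eq amp blowup}. Theorem \ref{thm quant non div} (Bernik--Kleinbock--Margulis), applied to the non-degenerate $\bf$, gives $\lam_1(\cA_\bdel)\ll 2^{-\alpha\kappa t}$ with $\alpha>0$ depending only on $n$ and $l$. Summing over the $O(t^{n-1})$ choices of $\bdel$ in \eqref{eq: size dyadic exponents} and over $t$ yields (i).

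On the complement, the absence of short vectors supplies a lower bound on $|\xi_0+\bxi\cdot\bG|+|\bxi\cdot\bG'|$ of size $2^{-\kappa t}$ times the natural scale. Repeated integration by parts, tracking the amplitude losses coming from the derivatives of the smoothed cutoff, then shows that every nonzero frequency contributes $O(2^{-Nt})$. The number of relevant frequencies is $\ll 2^{O(\eps t)}\prod_{i\ne j}\delta_i^{-1}$, which is at most $2^{(1+\theta)t}$ by \eqref{ziAlt}. With $\eps$ small this establishes (ii).

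We expect the main obstacle to be the compatibility of the parameters. The non-divergence exponent must beat the frequency count, which can be as large as $\prod_{i\ne j}\delta_i^{-1}$ with very unbalanced $\delta_i$. The integration-by-parts gain must also survive the blow-up of the cutoff derivatives, uniformly in $\bdel$. The danger is that when $\delta_j$ is tiny the flow is highly non-isotropic and the bound coming from the lower bound $\delta_i\ge 2^{-(1+\theta)t}$ is weak. The first thing we would try is to choose $\kappa$, $\kappa'$ and $\eps$ as fixed small multiples of $\theta$, uniform in $\bdel$, using \eqref{eq up bd zi}. If this fails in the extreme cusps, we would inflate any $\delta_i$ below a threshold $2^{-ct}$ up to that threshold. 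The resulting loss should be absorbed by the main term, because the product of the $\delta_i$ is fixed at about $\psi(2^t)$.
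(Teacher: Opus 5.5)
Your architecture is the paper's: reparametrise at the smallest side, pass to a smoothed count with the $L$-inflation, Fourier-expand the $n-1$ transverse cutoffs, apply Poisson summation in $(q,a)$, remove a BKM sub-level set with an $r$-rough cutoff, and integrate by parts in $x$ and in $y$. The gap is in the step that carries the proof, namely the definition of $\cA_\bdel$ and the choice of parameters.

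Shrinking every threshold by the same factor $2^{-\kappa t}$ below its natural scale is right only in the derivative direction, where it matches the paper's $K=Q^{-4\nu}\delta_{\min}^{-1}$. In the other two directions it goes the wrong way.

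\begin{itemize}
\item Your Fourier expansion produces $|\xi_i|$ up to $2^{\eps t}/\delta_i$. The absence of short vectors only constrains $\bxi$ with $|\xi_i|$ at most about $2^{-\kappa t}/\delta_i$. For larger $\bxi$, the norm can be large because of the coordinate $\delta_i\xi_i$ alone, and you learn nothing about the phase.
\item Even for the controlled $\bxi$, a lower bound $\|\xi_0+\bxi\cdot\bG(x)\|\gtrsim 2^{-\kappa t}Q^{-1}$ (with $Q=2^t$) is useless. The $y$-integral $\int y\,\omega(y)\,e(Qy[\Phi_{\bj}(x)-c])\,\d y$ only decays once $|\Phi_{\bj}(x)-c|\ge Q^{\eta-1}$.
\end{itemize}

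So $\Delta$ and the $T_i$ must be \emph{enlarged} by $Q^{O(\eta)}$; the paper takes $\Delta=Q^{10\eta-1}$ and $T_i=2C_{\bF}Q^{\eta}/\delta_i$. The unshrunk BKM quantity $Q^{-1}\prod_i\delta_i^{-1}\ge (Q\psi(Q))^{-1}$ already tends to infinity and can be as large as $2Q^{\theta}$. Hence the entire saving must come from $K$. With $K=Q^{-4\nu}\delta_{\min}^{-1}$ one gets $\Delta K\prod_iT_i/\max_iT_i\ll Q^{(n+9)\eta+\theta-4\nu}\le 2Q^{-\nu}$, provided $\eta\le\nu/(100n)$ and $\theta<\nu$.

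The idea you are missing is why this is affordable. The $x$-integration by parts never needs $|\Phi_{\bj}'|$ of size $\delta_{\min}^{-1}$. It works on $\{|\Phi_{\bj}'|\ge K/2\}$ as long as $Q^{\eta-1}K^{-1}\le r\le \tfrac12 Q^{-2\eta}\min(K\delta_{\min},\Delta/K)$, which is (Cond~2) together with (Cond~6). This window is nonempty for $K$ down to about $Q^{O(\eta)}(Q\delta_{\min})^{-1/2}$, and the paper takes $r=Q^{4\eta-1}/K$.

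There are three further problems.

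\begin{itemize}
\item Your smoothing scale $2^{-\kappa' t}$, with $\kappa'$ a small multiple of $\theta$, lies far outside this window. The bound on $\lam_1(\cA_\bdel)$ rests on an $O(r)$-neighbourhood of $\fS_{\Delta,K}$ lying inside $\fS_{2\Delta,2K}$. That needs $r\lesssim Q^{-2\eta}\Delta/K\approx Q^{-1+O(\nu)}\delta_{\min}$. Nothing controls a $2^{-\kappa' t}$-neighbourhood of $\fS_{\Delta,K}$, which may be all of $\I$.
\item The obstacle you single out is not the real one. $\fS_{\Delta,K}$ is a single set, defined by $\exists\bv$, and it handles every frequency at once. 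So the non-divergence bound never has to beat the frequency count; it only has to be summable against the $O(t^{n-1})$ choices of $\bdel$.
\item Your fallback of inflating small $\delta_i$ would enlarge $\bdel^{\times}$, so it could not give the stated bound. It is also unnecessary: $\psi(q)\ge q^{-1-\theta}$ forces $\delta_{\min}\ge\prod_i\delta_i\gg Q^{-1-\theta}$, and $\theta<\eta$ then gives (Cond~4).
\end{itemize}
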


\subsection{Proof of Theorem \ref{thm: main curves}
assuming Theorem \ref{thm: prob count}}
\label{subsec conv mod counting}
We need two simple lemmas.
\begin{lemma}
\label{le surjection}
Let $[[u]]:=[1,u] \cap \Z$, and $[[u,v]]:=[u,v] \cap \Z$.
If $u_1,\ldots,u_n\in \bN$, then 
\begin{align*}
[[u_1]] \times \cdots \times [[u_n]] &\rightarrow 
[[n,u_1 + \cdots +u_n]] \\
(z_1,\ldots,z_n) &\mapsto z_1 + \cdots + z_n
\end{align*}
is surjective.
\end{lemma}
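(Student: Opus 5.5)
The map lands in the target because each $z_i\in[[u_i]]$ satisfies $1\le z_i\le u_i$, so $n\le z_1+\cdots+z_n\le u_1+\cdots+u_n$. For surjectivity, I will take a target value $s\in[[n,u_1+\cdots+u_n]]$ and build a preimage by a greedy filling argument.

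Start from $(1,\ldots,1)$, whose coordinate sum is $n$. Set $r=s-n$; this surplus lies between $0$ and $\sum_i(u_i-1)$. Distribute $r$ greedily: put $z_1=1+\min\{r,u_1-1\}$, then let $r_1=r-(z_1-1)$ be the remaining surplus. Put $z_2=1+\min\{r_1,u_2-1\}$, and continue in the same way through the coordinates.

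Each $z_i$ then lies in $[1,u_i]$. The surplus is exhausted after step $n$, because its total never exceeds the combined capacity $\sum_i(u_i-1)$. Hence $z_1+\cdots+z_n=s$.

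An equivalent route is induction on $n$. The case $n=1$ is trivial. For the inductive step, write $s=z_n+s'$ with $z_n=\max\{1,\,s-(u_1+\cdots+u_{n-1})\}$. This choice satisfies $z_n\le u_n$ and places $s'$ in $[[n-1,u_1+\cdots+u_{n-1}]]$, so the inductive hypothesis supplies $(z_1,\ldots,z_{n-1})$.

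There is no real obstacle here. The only point needing care is the bookkeeping showing that the remaining surplus always fits within the remaining capacities.
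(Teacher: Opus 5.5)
Your proof is correct. The paper omits this argument as straightforward, and either of your routes supplies the routine verification the authors had in mind: the greedy distribution of the surplus $s-n\le\sum_i(u_i-1)$, or the induction with $z_n=\max\{1,\,s-(u_1+\cdots+u_{n-1})\}$, where $s'\ge n-1$ holds in both cases of the maximum.
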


\begin{proof}
We omit the proof as it is straightforward.
\end{proof}

Next, we need a lemma to compare the lim sup sets of 
$\cD(t)$ and $\widetilde{\cD}(t)$.
\begin{lemma}\label{lemma: reduction to dyadic ranges}
For
$\bdel=\bdel(t)\in \widetilde{\cD}(t)$, let
$\cW_{\bf}^{\times}(\psi;\bdel)$
be the set of $x\in \I$ such that there exist
$q\in \mathbb{N}$ 
with $2^{t}\leq q< 2^{t+1}$ and 
$a_1, \ldots, a_n\in \mathbb{Z}$ satisfying
\begin{equation}
\label{eq: dyadic delta inequality}
\Big\vert f_i(\dx)-\frac{a_i}{q}\Big\vert
<\frac{4\delta_i}{q}
\quad \textrm{for} \quad 1\leq i\leq n.
\end{equation}
Given $\cS\subseteq \widetilde{\cD}(t)$,
put 
$$
\cW_{\bf,\cS}^{\times}(\psi)=
\bigcup_{\bdel \in \cS} \cW_{\bf}^{\times}(\psi;\bdel).
$$
\begin{enumerate}[(a)]
\item We have  
\begin{equation}
\label{eq limsup inclusion}
\cC\cap \cW_n^{\times}(\psi) \subseteq 
\limsup_{t\rightarrow \infty}
\cW_{\bf,\widetilde{\cD}(t)}^{\times}(\psi).
\end{equation}
\item Suppose
Theorem \ref{thm: main curves}
is known to hold in $\R^{n-1}$.
Then replacing the dyadic ranges
$\widetilde{\cD}(t)$ with
the restricted ranges $\cD(t)$
on the right-hand side of 
\eqref{eq limsup inclusion}
does not change the measure, i.e.
\begin{equation*}
\label{eq limsup difference}
\lam_1\Big( \limsup_{t\rightarrow \infty}
\cW_{\bf,\widetilde{\cD}(t)}^{\times}(\psi)
\Big)
= 
\lam_1\Big(
\limsup_{t\rightarrow \infty}
\cW_{\bf,\cD(t)}^{\times}(\psi)
\Big).
\end{equation*}
\end{enumerate}
\end{lemma}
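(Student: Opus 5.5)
For part (a), I would argue pointwise. Take $\bx=\bf(x)\in\cC\cap\cW_n^\times(\psi)$. Then there are infinitely many $q$ with $\prod_i\|qf_i(x)\|<\psi(q)$.

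Fix such a $q$ and let $t$ satisfy $2^t\le q<2^{t+1}$. Monotonicity gives $\psi(q)\le\psi(2^t)$. Let $a_i$ be the integer nearest to $qf_i(x)$, so that $\|qf_i(x)\|=|qf_i(x)-a_i|\le 1/2$.

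The task is to choose integers $z_i\ge1$ with $\|qf_i(x)\|<4\cdot 2^{-z_i}$ and $\psi(2^t)/2\le\prod_i2^{-z_i}\le\psi(2^t)$. I would first take $w_i\ge1$ minimal with $\|qf_i(x)\|<2^{-w_i+1}$; if $\|qf_i(x)\|=0$, take $w_i$ large instead. Then $2^{-w_i}\le \|qf_i(x)\|$ when $\|qf_i(x)\|\neq 0$, so $\prod_i 2^{-w_i}\le\prod_i\|qf_i(x)\|<\psi(2^t)$. Consequently $\sum_i w_i\ge$ the target exponent $Z$, defined as the integer with $2^{-Z}\le\psi(2^t)<2^{-Z+1}$.

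Next I would decrease the $w_i$ to integers $z_i\in[1,w_i]$ with $\sum z_i=Z$. This is possible by Lemma \ref{le surjection} provided $Z\ge n$, which holds for large $t$ since $\psi(2^t)\to0$. Decreasing only enlarges each box, so $\|qf_i(x)\|<2\cdot2^{-z_i}<4\delta_i$. The product $\prod 2^{-z_i}=2^{-Z}$ lies in $[\psi(2^t)/2,\psi(2^t)]$, so $\bdel\in\widetilde\cD(t)$. Finally, distinct $q$ give infinitely many $t$, because each dyadic block is finite; this gives the limsup inclusion.

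For part (b), note that $\cD(t)\subseteq\widetilde\cD(t)$, so it suffices to show that $\limsup_t\cW^\times_{\bf,\widetilde\cD(t)\setminus\cD(t)}(\psi)$ is null. Let $\bdel\in\widetilde\cD(t)\setminus\cD(t)$. Then some $z_j<\Xi$, so $|f_j(x)-a_j/q|<4\cdot 2^{-z_j}/q$ is essentially trivial. The remaining coordinates satisfy
\[
\prod_{i\ne j}\|qf_i(x)\|\le \prod_{i\neq j}4\cdot2^{-z_i}\le 4^{n-1}2^{\Xi}\cdot 2\psi(2^t)\ll_L\psi(q/2).
\]
Thus $x$ lies in the multiplicative set for the projected curve $\bf^{(j)}=(f_i)_{i\ne j}$ in $\R^{n-1}$ with approximating function $C\psi(\lfloor q/2\rfloor)$. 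This function is still monotone, and it satisfies $\sum \psi(q)(\log q)^{n-2}<\infty$, since it is even dominated by the $(\log q)^{n-1}$ series. There are only $n$ choices of $j$.

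The projected curve is non-degenerate in $\R^{n-1}$: the Wronskian condition on $f_1',\dots,f_n'$ implies that the derivatives of $(f_i)_{i\ne j}$ up to order $n-1$ span $\R^{n-1}$. Hence Theorem \ref{thm: main curves} in dimension $n-1$ applies. Its Lebesgue measure is convergent Gallagher, and since the parametrisation by $x$ is bi-Lipschitz, the exceptional $x$ form a $\lam_1$-null set. Finally, when $n-1=1$ the statement is Khintchine-type convergence on the line, which is classical. The reduction from $C\psi(q/2)$ to a function in $[0,1)$ is routine: cap it at $1/2$, or note that convergence forces it to be small eventually.

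The main obstacle is part (b): checking that the $(n-1)$-dimensional hypotheses of Theorem \ref{thm: main curves} hold, namely non-degeneracy of the projected curve and monotonicity and convergence of the modified $\psi$. The bookkeeping of the constant $2^\Xi$ also needs care.
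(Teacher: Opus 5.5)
Your proof is correct and follows the paper's argument. In (a) you choose dyadic exponents $z_i\le w_i$ summing to the exponent of $\psi(2^t)$ via Lemma \ref{le surjection}. In (b) you drop a coordinate with $z_j<\Xi$ and apply Theorem \ref{thm: main curves} to the projected curve in $\R^{n-1}$. Your use of $C\psi(\lfloor q/2\rfloor)$ is more careful than the paper's $2^{10+\Xi}\psi$, since $\psi(2^t)$ cannot be bounded by $\psi(q)$.

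There are two small slips, neither of which affects the argument:
\begin{enumerate}
\item[(i)] $w_i$ should be the \emph{largest} integer with $\|qf_i(x)\|<2^{-w_i+1}$, since the smallest such integer is always $1$.
\item[(ii)] Deleting a coordinate only guarantees that the projected derivatives of orders $1,\ldots,n$ (not $n-1$) span $\R^{n-1}$. For example, $(x,x^3)$, obtained from $(x,x^2,x^3)$, fails at $x=0$ with orders $1,2$. Orders up to $n$ still give non-degeneracy, which is all you need.
\end{enumerate}
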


\begin{proof}
\begin{enumerate}[(a)]
\item
Let $\bf(\dx)\in \cW_n^{\times}(\psi)$.
If any $f_i(\dx)$ is a rational number, 
then we can pick $z_i$ arbitrarily large to confirm the desired claim that $$\bf(x) \in \limsup_{t\rightarrow \infty}
\cW_{\bf,\widetilde{\cD}(t)}^{\times}(\psi).$$
Therefore, we assume 
$\bf(\dx) \in (\R\setminus \Q)^n$.
Observe that some
strictly increasing sequence of positive integers $t_1, t_2,\ldots$ satisfies
$2^{t_j} \leq q_j < 2^{t_j+1}$
and
$$
\prod_{1\leq i\leq n} \|q_j f_i(\dx)\|\leq \psi(q_j)
\leq \psi(2^{t_j}),
$$
for some $q_j \in \bZ$ and all $j$.
Take $U_j\geq 1$ to be the integer with 
$2^{-(U_j+1)}\leq \psi(2^{t_j}) <2^{-U_j}$.
Then
$$
\prod_{1\leq i\leq n}\Vert q_j f_i(\dx)\Vert
\leq 2^{-U_j}.
$$

Since
$f_i(\dx)$ is irrational, we have
$ \Vert q_j f_i(\dx)\Vert >0$.
Let $u_{j,i} \geq 1$ be an integer
satisfying
$$
\Vert q_j f_i(\dx)\Vert \in 
(2^{-(u_{j,i}+1)},2^{-u_{j,i}}].
$$
By Lemma \ref{le surjection},
we can find integers $1\leq z_{j, i} \leq u_{j, i}+1 $ such that
$$z_{j, 1} +\cdots + z_{j, n} = U_j+1.$$
Then
$$\Vert q_j f_i(\dx)\Vert\leq 2^{-z_{j, i}+1}$$
implies that
$\bff(\dx)$ satisfies 
\eqref{eq: dyadic delta inequality} 
with $\bdel_j=(2^{-z_{j, 1}},\ldots, 2^{-z_{j, n}})$.
Further, $$\prod_{1\leq i\leq n}2^{-z_{j, i}}
= 2^{-U_j-1 } \leq  \psi(2^{t_j})< 2^{-U_j}=2\prod_{1\leq i\leq n}2^{-z_{j, i}},$$
whence $\bdel_j \in \widetilde{\cD}(t_j)$. 
\item 
Let
\begin{align*}
\cY & =
\Big(\limsup_{t\rightarrow \infty}
\cW_{\bf,\widetilde{\cD}(t)}^{\times}(\psi)
\Big)
\setminus
\Big(\limsup_{t\rightarrow \infty}
\cW_{\bf,\cD(t)}^{\times}(\psi)
\Big) \\
& \subseteq \limsup_{t\rightarrow \infty}
\Big(\cW_{\bf,
\widetilde{\cD}(t)}^{\times}(\psi)
\setminus
\cW_{\bf,\cD(t)}^{\times}(\psi)\Big).
\end{align*}
Denote by $\bG_i: \I \rightarrow \R^{n-1}$ 
the function obtained 
by deleting the $i^{th}$ component function
of $\bf$.
Let $\widetilde{\cD}_i(t)$
be the set of vectors $\bdel\in
\widetilde{\cD}(t)$
with $i^{th}$ component deleted.
Notice that
$$
\cY \subseteq 
\limsup_{t\rightarrow \infty}
\Big(
\bigcup_{i\leq n}\cW_{\bG_i,
\widetilde{\cD}_i(t)}^{\times}(2^{10+\Xi}\psi)
\Big)
=
\bigcup_{i\leq n}
\limsup_{t\rightarrow \infty}
\cW_{\bG_i,
\widetilde{\cD}_i(t)}^{\times}(2^{10+\Xi}\psi).
$$
Abbreviate by $\cC_i\subseteq \R^{n-1}$ 
the curve immersed by $\bG_i$.
To investigate the right-hand side, note
$$
\cW_{\bG_i,
\widetilde{\cD}_i(t)}^{\times}(2^{10+\Xi}\psi)
\subseteq 
\cC_i \cap\cW^{\times}_{n-1}(2^{10+\Xi}\psi).
$$
Each $\cC_i$ is non-degenerate.
In view of 
$$
\sum_{q\geq 1} \psi(q) (\log q)^{(n-1)-1}
\ll \sum_{q\geq 1} \psi(q) (\log q)^{n-1}<\infty,
$$
we deduce from 
Theorem \ref{thm: main curves} in $\R^{n-1}$ that 
\mbox{$\cC_i \cap\cW^{\times}_{n-1}(\psi)$} 
is a null set. 
\end{enumerate}
\end{proof}
We are now ready to prove our main result.
\begin{proof}[Proof of Theorem \ref{thm: main curves}
assuming Theorem \ref{thm: prob count}]
In light of Lemma \ref{lemma: reduction to dyadic ranges} (a), it suffices to show that
$$
\lam_1\Big(
\limsup_{t\rightarrow \infty}
\cW_{\bf,\widetilde{\cD}(t)}^{\times}(\psi)
\Big)=0.
$$
We argue by induction on the ambient dimension $n$.
The base case, $n=2$, was handled
by Badziahin and Levesley 
\cite[Theorem 1]{BL2007}. For $n\geq 3$, the induction assumption on $n-1$ allows us to 
use item (b) of 
Lemma \ref{lemma: reduction to dyadic ranges}.
Therefore, it is enough to establish
$$
\lam_1\Big(
\limsup_{t\rightarrow \infty}
\cW_{\bf,\cD(t)}^{\times}(\psi)
\Big)=0.
$$
To show this, let $\nu>0$ be given. 
For $t\in \mathbb{N}$ and $\bdel\in \mathcal{D}(t)$, let $\cA_{\bdel}$ be the exceptional set guaranteed by Theorem \ref{thm: prob count} and set $\cB_{\bdel}:=\I\setminus \cA_{\bdel}$. By Theorem \ref{thm: prob count}, we have  
$$\#\cR(\cB_{\bdel}, 4\bdel, t) \ll
2^{2t}\bdel^{\times}$$
uniformly in $\bdel\in \cD(t)$ and $t\geq 1$.

Let  
$$
\cP_{\bdel} = 
\bigcup_{(\ba,q)\in \cR(\cB_{\bdel}, 4\bdel, t)} 
\Big\{ \dx \in \cB_\bdel:
\Big\vert f_i(\dx)-\frac{a_i}{q}\Big\vert
<\frac{4\delta_i}{q}
\quad \text{ for } 1\leq i\leq n \Big\}.
$$
For any integer $T\geq 1$, we have
$$
\cE:= 
\limsup_{t\rightarrow \infty}
\cW_{\bf,\cD(t)}^{\times}(\psi)
\subseteq \bigcup_{t\ge T} \bigcup_{\bdel \in {\cD}(t)}\left(\cA_{\bdel} \cup \cP_{\bdel}\right).
$$
Hence, 
\begin{equation}
    \label{eq excep A C}
    \lam_1(\cE)\leq 
\sum_{t\geq T} \sum_{\bdel\in \cD(t)}\lam_1 (\cA_{\bdel})
+ 
\sum_{t\geq T} \sum_{\bdel\in \cD(t)}\lam_1 (\cP_{\bdel}).
\end{equation}
By \eqref{eq: convergence of measures}, 
\begin{equation}
\label{eq A small}
\lim_{T\to \infty}\,\sum_{t\geq T}\sum_{\bdel\in \cD(t)} \lam_1 (\cA_\bdel)=0.
\end{equation}

Let $\delta_{\textrm{min}}=\min_{1\leq i\leq n} \delta_i$. We observe $\mu_1 (\cP_\bdel)$
is at most $\#\cR(\cB_{\bdel},4\bdel, t)$ times
\begin{align*}
    & 
\max_{(\ba,q)\in \cR(\cB_{\bdel},4\bdel, t)}
\lam_1 \Big(
\Big\{ \dx \in \I:
\Big\vert f_i(\dx)-\frac{a_i}{q} \Big\vert 
<\frac{4\delta_i}{q} 
\textrm{ for all } 1\leq i\leq n 
\Big\}
\Big)
\ll \frac{\delta_{\textrm{min}}}{2^t}.
\end{align*} 
Theorem \ref{thm: prob count} ensures that
$$\#\cR(\cB_{\bdel},4\bdel, t)\ll 
2^{2t}\bdel^{\times}= 
\bdel^{\times},$$
whence
$$
\sum_{\bdel \in \cD(t)}\lam_1 (\cP_{\bdel}) \ll
\sum_{\bdel \in \cD(t)}
2^{2t}
\bdel^{\times} 
\frac{\delta_{\textrm{min}}}{2^{t}}
=
\sum_{\bdel \in \cD(t)}
2^{t}
\prod_{1\leq i\leq n} \delta_i
\ll 
\sum_{\bdel \in \cD(t)}
2^{t} \psi(2^t).
$$
Thus, by \eqref{eq: size dyadic exponents},
$$
\sum_{\bdel \in \cD(t)}\lam_1 (\cP_{\bdel})
\ll t^{n-1}2^{t} \psi(2^t).
$$
Summing over $t\geq T$, taking the limit $T\to \infty$, and using \eqref{eq: series of measures} gives
$$
\lim_{T\to \infty }\sum_{t\geq T} \sum_{\bdel \in \cD(t)}\lam_1 (\cP_{\bdel}) 
\ll \sum_{t\geq T}  
t^{n-1}2^{t} \psi(2^t)=0.
$$
Combining this with \eqref{eq excep A C} and \eqref{eq A small} delivers
$\lam_1(\cE)=0$.
\end{proof}

\subsection{Smooth counting functions}
\label{subsec smooth cutoff}
To establish Theorem \ref{thm: prob count}, 
we smooth our counting functions.
We introduce smooth cut-offs 
$$
\ome,w: \bR \rightarrow [0,1]
$$  
such that:
\begin{itemize}
\item 
The support of $\ome$ is contained in $[1/2,4]$, and $\ome(x)=1$ for $x\in [1, 2]$.
\item 
The support of
$w$ is contained in 
$(-2, 2)$, we have $w(x)=1$ for 
$x \in [-1, 1]$, and $w(x)=w(-x)$ 
for all $x\in \mathbb{R}$.
\end{itemize}
We shall also require a family of smooth weights adapted to $\cG$.

\begin{definition}
Let $\cG\subset \I$ and $\kappa\in (0, 1)$ be such that the $\kappa$ neighbourhood of $\cG$ is contained in $U$, where $U$ is as in
\eqref{eq: curve}.
A smooth weight $\Omega:\mathbb{R}\to [0, 1]$ is \emph{$(\cG, \kappa)$ adapted} if its support is contained in $U$ and it is identically $1$ on the $\kappa$ neighbourhood of $\cG$.
\end{definition}

Let $t\in \mathbb{N}$ and $\bdel=(\delta_1, \ldots, \delta_n)\in \cD(t)$. 
For $\cG\subseteq \I$, let $\mho: \mathbb{R}\to [0, 1]$ be a smooth weight which is $(\cG, \kappa)$ adapted for some small enough $\kappa$. Recall the constant $L$ from \eqref{def L and Crho}. For $1\leq j\leq n$, we define
\begin{equation}
\label{def Nj}
N_{j,\mho}(\bdel, t)=
\sum_{(q,a)\in \Z^{2}} 
({\mho}\circ f_j^{-1})\left(\frac{a}{q}\right) 
\ome\left(\frac{q}{2^t}\right)
\prod_{\substack{1\leq i\leq n\\ i\neq j}} 
w\bigg(
\frac{\|q (f_i\circ f_j^{-1}) (\frac{a}{q}) \|}{4L\delta_i}
\bigg).
\end{equation}
The following lemma details how $\cR(\cG, 4\bdel, t)$ can be dominated by the smooth count $N_{j, \mho}(\cG, \bdel, t)$, where the index $j\leq n$ is such that
$\min_{1\leq i\leq n} \delta_i = \delta_j$.

\begin{lemma}
\label{lem rough to smth}
Let $\rho$ and $L$ be as in \eqref{eq rho assump} and \eqref{def L and Crho} respectively. 
Let $1\leq j\leq n$ be such that $\min_{1\leq i\leq n} \delta_i=\delta_j$, and let $\mho$ be $(\cG, 4L\delta_j/2^t)$ adapted.
Then, for any $\cG \subseteq \I$, $t\in \mathbb{N}$ and $\bdel=(\delta_1, \ldots, \delta_n)\in \cD(t)$,
\begin{equation}
\label{eq Rt Nj comp}
\#\cR(\cG, 4 \bdel, t) \leq N_{j, \mho}(\cG, \bdel, t).
\end{equation}
\end{lemma}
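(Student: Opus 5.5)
We prove this by an injection: each pair $(\ba,q)$ counted by $\cR(\cG,4\bdel,t)$ should be sent to the pair $(q,a_j)\in\bZ^2$, and we show that the summand of \eqref{def Nj} at $(q,a_j)$ equals $1$. Since all weights are nonnegative, this gives \eqref{eq Rt Nj comp}, provided the map $(\ba,q)\mapsto(q,a_j)$ is injective on the counted set. Injectivity is the only non-obvious point, because several $(\ba,q)$ may share the same $(q,a_j)$.

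Fix $(\ba,q)$ counted by $\cR(\cG,4\bdel,t)$, so $2^t\le q<2^{t+1}$ and some $x\in\cG$ satisfies $|f_i(x)-a_i/q|<4\delta_i/q$ for all $i$. We check the three factors in turn.

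\emph{The factor $\ome$.} Since $q/2^t\in[1,2)$, we have $\ome(q/2^t)=1$.

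\emph{The factor $\mho\circ f_j^{-1}$.} Put $y=a_j/q$. Then $|y-f_j(x)|<4\delta_j/q$. Because $\delta_j\le 2^{-\Xi}$ is tiny and $f_j(U)$ is open, $y$ lies in $f_j(U)$. By the mean value theorem and \eqref{def L and Crho} (the case $i=j$, $j=0$, i.e. $(f_j^{-1})'$ is bounded by $C_\rho$),
\[
|f_j^{-1}(y)-x|\le C_\rho\cdot 4\delta_j/q < 4L\delta_j/2^t .
\]
So $f_j^{-1}(y)$ lies in the $4L\delta_j/2^t$ neighbourhood of $\cG$, and $\mho(f_j^{-1}(y))=1$ by adaptedness.

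\emph{The factors $w$, for $i\ne j$.} By the mean value theorem with the bound $|(f_i\circ f_j^{-1})'|\le C_\rho$,
\[
|q(f_i\circ f_j^{-1})(y)-a_i|\le q|f_i(x)-a_i/q|+qC_\rho|y-f_j(x)|<4\delta_i+4C_\rho\delta_j .
\]
Using $\delta_j\le\delta_i$ and $L=10C_\rho$, we may assume $C_\rho\ge1$ (enlarge it if needed). Then the right side is at most $8C_\rho\delta_i<4L\delta_i$. Hence $\|q(f_i\circ f_j^{-1})(y)\|/(4L\delta_i)<1$, and each $w$ factor equals $1$.

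\emph{Injectivity.} Suppose $(\ba,q)$ and $(\ba',q)$ are both counted and $a_j=a'_j$. The estimate above gives $|a_i-q(f_i\circ f_j^{-1})(y)|<4L\delta_i\le 4L2^{-\Xi}<1/2$, using \eqref{def Xi}. So $a_i$ and $a'_i$ are both within $1/2$ of the same real number, and hence $a_i=a'_i$ for every $i$. Thus each summand of $N_{j,\mho}$ that equals $1$ accounts for at most one element of $\cR$, and the inequality follows.

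The only mildly delicate step is this injectivity, and it is exactly what the choice of $\Xi$ in \eqref{def Xi} is designed for. The same constraint ensures that $a_j/q$ stays in the domain of $f_j^{-1}$. Otherwise the proof is bookkeeping with the derivative bounds \eqref{eq rho assump} and \eqref{def L and Crho}.
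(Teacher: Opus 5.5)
Your proof is correct and is essentially the paper's argument. The paper also sends $(\ba,q)\mapsto(a_j,q)$ and uses the same mean value estimates to show that the image lies in a set $\cS_j(\cG,\bdel,t)$ on which every weight in \eqref{def Nj} equals $1$; it gets injectivity from $L\delta_i<1/4$ via \eqref{def Xi}, whereas your comparison of $a_i$ and $a_i'$ with the common value $q(f_i\circ f_j^{-1})(a_j/q)$ is a slightly cleaner route to the same conclusion. Incidentally, you do not need to enlarge $C_\rho$, since the diagonal terms $i=j$ in \eqref{def L and Crho} already force $C_\rho\ge 1$.
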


\begin{proof}
Define
\begin{equation*}
\label{eq def Sj set}
\cS_j(\cG, \bdel, t) =
\Big\{(a, q)\in \mathbb{Z}^2: 
2^t\leq q\leq 2^{t+1} \text{ and }
\eqref{eq something} \text{ holds} \Big\},
\end{equation*}
where $\eqref{eq something}$
refers to the system of inequalities
\begin{equation}
\label{eq something}
\min_{\dx\in \cG}\Big\vert 
\dx-f_j^{-1}\left(\frac{a}{q}\right)\Big\vert
< \frac{4L\delta_j}{q}, \quad 
\left\|qf_i\circ f_j^{-1}\left(\frac{a}{q}\right)\right\|<4L\delta_i \textrm{ for } 1\leq i\neq j\leq n.
\end{equation}
Using the support properties of the smooth cutoff functions defined above, it is straightforward to check that 
\begin{equation}
\label{eq Nj lb ct}
\#\cS_j(\cG, \bdel, t)\leq N_{j, \,\mho}(\cG, \bdel, t).
\end{equation}
Let $\pi_j: \cR(\cG, 4\bdel, t)\to \mathbb{Z}^2$ be the projection map
$$\pi_j(\bfa, q)=(a_j, q).$$
We claim that $\pi_j$ maps $\cR(\cG, 4\bdel, t)$
injectively to $\cS_j(\cG, \bdel, t)$.
This claim, combined with \eqref{eq Nj lb ct}, will imply the desired estimate \eqref{eq Rt Nj comp}.

\bigskip

To prove the claim, let $(\bfa, q)\in \cR(\cG, 4\bdel, t)$. Then
$2^t\leq q< 2^{t+1}$, and there exists $\dx_0\in \cG$ such that
\begin{equation}
\label{eq xi exists}
\Big\vert f_i(\dx_0)-\frac{a_i}{q}\Big\vert
<\frac{4\delta_i}{q}
\quad \textrm{ for all } 1\leq i\leq n.
\end{equation}
Let $C:=C_\rho$ be as in \eqref{def L and Crho}.
For $1\leq i\leq n$ with $i\neq j$, using the triangle inequality, the mean value theorem for $f_i\circ f_j^{-1}$, and \eqref{eq xi exists}, we get
\begin{align*}
\left|f_i\circ f_j^{-1}\left(\frac{a_j}{q}\right)-\frac{a_i}{q}\right|& \leq \left|f_i\circ f_j^{-1}\left(\frac{a_j}{q}\right)-f_i(\dx_0)\right|+\left|f_i(\dx_0)-\frac{a_i}{q}\right|\\
& \leq C \left|\frac{a_j}{q}-f_j(\dx_0)\right|+\frac{4\delta_i}{q}\\
&\leq  \frac{4C\delta_j}{q}+\frac{4\delta_i}{q}<\frac{4L\delta_i}{q},
\end{align*}
where we have used $\delta_j\leq \delta_i$ to get the last inequality. 
Arguing similarly and using the mean value theorem for $f_j^{-1}$, we can also see that
$$\left|x_0 -f_j^{-1}\left(\frac{a_j}{q}\right)\right|<\frac{4L\delta_j}{q}.$$
This shows that $\pi_j$ maps $\cR(\cG, 4\bdel, t)$ to $\cS_j(\cG, \bdel, t)$. 

To see that this mapping is injective, let $(\bfa, q_1), (\bfb, q_2)\in \cR(\cG, 4\bdel, t)$ with $\pi_j((\bfa, q_1))=\pi_j( (\bfb, q_2))$. This immediately gives $a_j=b_j$ and $q_1=q_2=q$ (say). Further, let $\dx_1, \dx_2\in \cG$ be such that
\begin{equation}
    \label{eq exists xi1 xi2}
    \left|f_i(\dx_1)-\frac{a_i}{q}\right|< \frac{4\delta_i}{q}, \qquad \left|f_i(\dx_2)-\frac{b_i}{q}\right|< \frac{4\delta_i}{q}, \qquad \textrm{ for } 1\leq i\leq n. 
\end{equation}
Using the triangle inequality and the above for $i=j$, we get
$$\left|f_j(\dx_1)-f_j(\dx_2)\right|< \frac{8\delta_j}{q}.$$
For each $1\leq i\leq n$ with $i\neq j$, we again use the triangle inequality, the mean value theorem for $f_i \circ f_j^{-1}$ and \eqref{eq exists xi1 xi2} to estimate
\begin{align*}
\left|\frac{a_i}{q}-\frac{b_i}{q}\right|
& \leq \left|f_i(\dx_1)-\frac{a_i}{q}\right|
+\left|f_i(\dx_1)-f_i(\dx_2)\right|
+\left|f_i(\dx_2)-\frac{b_i}{q}\right| \\
& \leq \frac{8\delta_i}{q}+\frac{8C\delta_j}{q}
\le \frac{4L\delta_i}{q}.
\end{align*}
Because $\Xi$ is large, see \eqref{def Xi}, 
we are ensured $L\delta_i\leq L 2^{-10} L^{-1}<1/4$
and hence $a_i=b_i$ for all $1\leq i\leq j$. This shows that $\pi$ is injective, thus
completing the proof of the claim and also the lemma. 
\end{proof}

\subsection{Final reduction}
\label{sec final reduc}
In view of Lemma \ref{lem rough to smth}, to establish Theorem \ref{thm: prob count}, it suffices to show that the following holds for all $t \in \bN$ and all $\bdel\in \cD(t)$. 
Let $\delta_{\textrm{min}}:=\min_{1\leq i\leq n} \delta_i=\delta_j$.
Then there exists a set $\cA_\bdel\subset \I$ satisfying
\eqref{eq: convergence of measures} and
a function $\Omega_{\textrm{g}}=\Omega_{\textrm{g}, \bdel, t}$ 
which is $\left(\I\setminus \cA_{\bdel}, 4L\delta_{\textrm{min}}/2^t\right)$ adapted such that
\begin{equation}
    \label{eq smooth Bdel count}
   N_{j, \Omega_{\textrm{g}}}(\bdel, t) \ll_{\bff, \rho} 2^{2t}\bdel^{\times},
\end{equation}
where the implicit constant is independent of $t$ and $\bdel$.

We shall show this in the case when $j=1$ and $\delta_1=\min_{1\leq i\leq n} \delta_i$. The other cases follow by a relabelling of coordinates. For $2\leq i\leq n$, we define $F_i: f_1(U)\to \mathbb{R}$ to be
\begin{equation*}
    \label{def repar f}
    F_i=f_i\circ f_1^{-1},\qquad \bfF(\dx)=(F_2(\dx), \ldots, F_n(\dx)).
\end{equation*}
Note that if $\bff$ is smooth and $l$-non-degenerate on $U$, then so is $\bfF$. Further, its derivatives satisfy the estimate
\begin{equation*}
    \label{eq repar F der est}
    \sup_{y\in U}\Vert \bfF^{(N)}(y)\Vert_\infty \ll_N \rho^{-N}
\end{equation*}
for each $N\geq 1$. Thus, replacing $\bff(x)$ by $(x,\bfF(x))$, and renaming $f_1(U)$ and $f_1(\I)$ to $U$ and $\I$ respectively, we may assume without loss of generality that the curve $\cC$ is in the parametrised form
$$\cC= \{(\dx, \mathbf{F}(\dx)): \dx\in \I\subset U\},
$$
where 
$
\mathbf{F}=(\bfF_2(\dx),\ldots, \bfF_n(\dx)): U\to \mathbb{R}^{n-1}
$
is smooth and non-degenerate.

\subsection{Quantitative non-divergence and sub-level set decomposition}
\label{sub-levelFA}

\subsubsection{Quantitative non-divergence in the space of lattices}
\label{subsec quant nondiv}
To move forward, our method needs deep input from homogeneous dynamics.
It gives a useful estimate for the measure of this set under the assumption that $\bfF$ 
is non-degenerate (recall Definition \ref{def nondeg}).
The following is a special case of \cite[Theorem 1.4]{BKM2001}.

\begin{theorem}[Quantitative non-divergence estimate]
\label{thm quant non div}
Let $U_0\subseteq \R$ be an open set. 
Let $\bG=(G_1\ldots,G_n): U_0 \rightarrow \R^n$
be $l$-non-degenerate. Given
$\mathbf{V} = (V_1, \ldots, V_n)\in \R_{\geq 1}^{n}$, 
$\Delta\in (0, 1]$, and $K>0$,
consider the sub-level set
\begin{equation}
\label{def sub-level set}
\mathfrak{S}_{\Delta, K, \bV}^{\bG, U_0} = \left\{
\dx\in U_0: 
    \exists \bfv\in \Z^{n}\setminus\{\bzero\}
    \quad
    \begin{array}{l}
   \Vert \langle \bG(x),\bfv \rangle \Vert 
   < \Delta, \\
   |\langle \bG'(x),\bfv \rangle| < K, \\
   |v_1| < V_1,\ldots, |v_n| < V_n  
  \end{array}
    \right\}.
\end{equation}
Suppose that
\begin{equation}
    \label{Cond 1}
    \tag{Cond 1}
   \Delta K \frac{V_1 V_2\ldots V_n}
   {\max_{1\leq i\leq n}V_i}<1.
\end{equation}
Then there exists a constant $C=C(\bG)>0$ such that
\begin{equation*}
\label{eq nondiv glob}
\lam_1(\mathfrak{S}_{\Delta, K, \bV}^{\bG, U_0}) 
\leq C \cdot  
\left( \Delta K \frac{V_1V_2\ldots V_n}{\max_{1\leq i\leq n}V_i}\right)^{\frac{1}{(2l-1)(n+1)}}.
\end{equation*}
\end{theorem}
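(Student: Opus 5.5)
This is a special case of \cite[Theorem 1.4]{BKM2001}, so the plan is to match our setting to theirs rather than reprove quantitative non-divergence. Take $d=1$ and the map $\dx\mapsto \tilde\bG(\dx)=(1,G_1(\dx),\ldots,G_n(\dx))$. Consider the unipotent matrices $u_{\bG(\dx)}$ and the diagonal matrix
\[
g=\mathrm{diag}(\Delta^{-1},K^{-1},V_1^{-1},\ldots,V_n^{-1}),
\]
suitably normalised to have determinant one. A vector $\bfv$ satisfying the three inequalities in \eqref{def sub-level set} gives a short nonzero vector in the lattice $g u_{\bG(\dx)}\bZ^{n+1}$. That lattice vector is built from $(p,\bfv)$, where $p$ realises $\Vert\langle\bG(\dx),\bfv\rangle\Vert$, and from the derivative $\langle\bG'(\dx),\bfv\rangle$. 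So $\mathfrak{S}_{\Delta,K,\bV}$ is contained in the set where the BKM dynamical condition fails at the relevant scale, and BKM's Theorem 1.4 applies in exactly this form. Their formulation, with independent bounds $\delta$ on $|\langle \bfv,\bG\rangle+p|$, $K$ on the gradient, and $T_i$ on $|v_i|$, is verbatim our statement with $\delta=\Delta$ and $T_i=V_i$.

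The steps, in order, are as follows.

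\begin{enumerate}[(i)]
\item Recall that $l$-non-degeneracy of $\bG$ on $U_0$ ensures that, locally on balls $B$ with $3^{n+1}B\subset U_0$, the functions $1,G_1,\ldots,G_n$ are $(C,\alpha)$-good with $\alpha=1/(2l-1)$. This follows from \cite[Proposition 3.4]{KM1998} in dimension one.
\item Verify the other BKM hypothesis: the skew-gradient/norm lower bounds on $\sup_B$ of $|\langle \bfv,\bG\rangle+p|$ for primitive subgroups. Non-degeneracy again guarantees this, uniformly in $\bfv$, on small enough balls.
\item Apply \cite[Theorem 1.4]{BKM2001}, which bounds the measure of the exceptional set in $B$ by $C\,(\Delta K T_1\cdots T_n/\max_i T_i)^{\alpha/(n+1)}\lam_1(B)$. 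It does so provided $\Delta K\, T_1\cdots T_n/\max_i T_i<1$, which is \eqref{Cond 1}. Their normalisation of the exponent with $d=1$ yields precisely $\frac{1}{(2l-1)(n+1)}$.
\item Pass from local to global: cover the relevant compact piece of $U_0$ by finitely many such balls (using the paper's standing boundedness/closure conventions) and sum. This gives the constant $C=C(\bG)$.
\end{enumerate}

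The main obstacle is bookkeeping rather than new mathematics. First, the non-divergence input must be applied to $\tilde\bG$ including the constant function $1$ (the role of $p$). Second, the exponent must come out exactly as $1/((2l-1)(n+1))$. Third, the global constant requires $U_0$ to be relatively compact, or the manifold to be non-degenerate up to the boundary; otherwise one restricts to a compact subset, as in the intended application to $\I\subset U$.
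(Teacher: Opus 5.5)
Your proposal is correct and matches the paper, which offers no independent argument and simply specialises \cite[Theorem 1.4]{BKM2001} to $d=1$ with $\delta=\Delta$ and $T_i=V_i$; your finite-cover step, together with the caveat that $U_0$ should be relatively compact with non-degeneracy up to the boundary, makes explicit a point the paper leaves implicit. Your sketch of BKM's internals is slightly off, but this is harmless since you invoke their theorem as a black box: the relevant object is a rank-$(n+1)$ discrete subgroup of $\mathbb{R}^{n+2}$, not a normalised unimodular lattice, and \cite[Proposition 3.4]{KM1998} gives $(C,1/l)$-goodness in dimension one, whereas the exponent $1/(2l-1)$ comes from BKM's goodness estimates for norms of wedge products involving skew gradients of $\langle \mathbf{G},\mathbf{v}\rangle+p$.
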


\subsubsection{Sub-level set decomposition}
\label{subsec sub-level}
Roughly speaking,
the objective of this subsection is to 
partition $\I$ smoothly 
into a `good' and a `bad' region. 
To make this more precise, 
we introduce more notation.
Throughout this section, 
we fix $t\in \mathbb{N}$ and $\bdel\in \cD(t)$. 
Further, we fix another small parameter $\eta\in (0, 1/2)$ to be chosen at the end depending on $\theta$ from \eqref{eq lower bound psi} but independent of $\bdel$ and $t$.
Let 
\begin{equation}
\label{eq def Q Ti}
Q=Q_t=2^t, \qquad T_i=\frac{2C_\bfF Q^{\eta}}{\delta_i} \textrm{ for } 1\leq i\leq n.
\end{equation}
Here $C_{\bfF}>1$ is a large constant 
depending on $\bfF$ and $\rho$, see \eqref{eq grad lb}.
All constants, implicit or explicit, are allowed to depend on $\bff$, $\rho$ and $\eta$, and hence also on $\theta$.
The smallest $\delta_i$
plays a special role. As we argued earlier,
we may assume --- after possibly relabelling
and re-parametrising --- that 
\begin{equation}
\label{def del min T max}
\delta_{\textrm{min}} := \min_{1\leq i\leq n}\delta_i=\delta_1 \quad
\mathrm{and} \quad 
T_{\textrm{max}}:= \max_{1\leq i\leq n} T_i= \frac{2C_\bfF Q^{\eta}}{\delta_1}.
\end{equation}
Throughout
the present section, we shall abbreviate
\begin{equation}
\label{def sub-level set some parameter less}
\mathfrak{S}_{\Delta, K}:=
\mathfrak{S}_{\Delta, K, \mathbf{T}}^{\bf, U},   
\end{equation}
where the parameters $\Delta, K$ are
still for us to choose (in terms of $\bdel$ and $t$).

We construct two smooth functions 
with the property that their sum 
majorises the indicator function $1_{\I}$.
The first of these lives on a small $r$ 
neighbourhood of the sub-level set 
$\mathfrak{S}_{\Delta, K}$ 
given by \eqref{def sub-level set}, 
for a suitably chosen value of $r$ depending on $\bdel$ and $t$.
Let 
\begin{equation*}
\label{def Phi}    
\Phi_{\bfv}(\dx) = 
\langle (\dx, \bfF(\dx)), \bfv\rangle=xv_1+\langle \bfF(\dx), \widetilde{\bfv}\rangle, \textrm{ where } \bfv=(v_1, \widetilde{\bfv}).
\end{equation*}
With the above notation, we can write 
\begin{equation*}
\label{eq sub-level set specialised}
\mathfrak{S}_{\Delta, K} 
    =
    \left\{
    \dx\in U: 
    \exists \bfv\in \Z^{n}\setminus\{\bzero\}
\quad
    \begin{array}{l}
   \Vert \langle \Phi_{\bfv}(x),\bfv \rangle \Vert 
   < \Delta, \\
   |\langle \Phi_{\bfv}'(x),\bfv \rangle|
   < K, \\
   |v_1| < T_1,\ldots, |v_n| < T_n  
  \end{array}
    \right\}.
\end{equation*}
The measure of this set can be estimated 
by Theorem \ref{thm quant non div}. 

The other smooth function is supported on 
the $r$-neighbourhood of the complement 
$\I\setminus \mathfrak{S}_{\Delta, K}$ 
and is the region where Fourier-analytic 
techniques can be employed to count 
rational points in the vicinity of $\cC$. 
To this effect, we construct a smooth cutoff function supported on an $r$-neighbourhood of the set $\mathfrak{S}_{\Delta, K}$, with $\Delta$ and $K$ to be chosen at the end in \S \ref{subsec par choice}.

\begin{lemma}[Smooth cutoff]
\label{le smooth cut off}
For $\Delta\in (0, 1/2]$ and $K>0$, let $\mathfrak{S}_{\Delta, K}$ be as defined in \eqref{def sub-level set}.
Suppose that 
\begin{equation}
    \tag{Cond 2}
    \label{Cond 2}
    0<r\leq  \frac{Q^{-2\eta}}{2}\min(K\delta_{\textnormal{min}},\Delta/K).
\end{equation}
Then there exists a smooth function 
$W_{r} = W_{r,\mathfrak{S}_{\Delta, K}}:U \rightarrow [0,1]$
with the following properties. 
\begin{enumerate}
    \item The function $W_r$ is identically $1$ on $\mathfrak{S}_{\Delta, K}$. 
    
    \item The function $W_r$ is $r$-rough, meaning 
    its derivatives satisfy
    \begin{equation}
        \label{eq r rough est}
        \left\|W_r^{(N)}\right\|_{\infty}
         \ll_N r^{-N}
    \end{equation}
    for any non-negative integer $N$. 
    \item The support of $W_r$ can be covered by a union of $T_r$ many open balls of radius $r$ 
         with $T_r=O\left(\frac{\lam_1(\fS_{2\Delta, 2K})}{r}\right)$.
         More precisely, 
         \begin{equation}
             \label{eq supp wr}
             \supp\, W_r\subseteq \bigcup _{i\leq T_r} (x_i-r, x_i+r),
         \end{equation}
        with $x_i\in U$ for $1\leq i\leq T_r$.
         Moreover, 
         \begin{equation}
             \label{eq sup supp wr}
             \bigcup _{i\leq T_r} (x_i-2r, x_i+2r)\subseteq \fS_{2\Delta, 2K}.
         \end{equation}
\end{enumerate}
\end{lemma}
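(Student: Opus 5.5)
The construction is a standard mollification of the indicator of a neighbourhood of $\fS_{\Delta,K}$. The one substantive step is a stability property of the sub-level set at scale $2r$, and this is exactly what \eqref{Cond 2} is designed to guarantee.

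\emph{Stability.} I claim that if $\dx\in\fS_{\Delta,K}$ and $|y-\dx|\le 3r$, then $y\in\fS_{2\Delta,2K}$. Let $\bfv$ be a witness for $\dx$. The box constraints $|v_i|<T_i$ do not depend on the point, so $\bfv$ is a natural candidate witness for $y$. By the mean value theorem,
\[
|\Phi_\bfv(y)-\Phi_\bfv(\dx)|\le 3r\sup|\Phi_\bfv'| .
\]
The derivative near $\dx$ is controlled by $|\Phi_\bfv'(\dx)|<K$ plus a second-order term:
\[
|\Phi_\bfv'(z)-\Phi_\bfv'(\dx)|\le 3r\sup|\Phi_\bfv''|\ll r\sum_{i\ge2}|v_i|\ll r\,T_{\textrm{max}}\ll rQ^\eta/\delta_{\textrm{min}} .
\]
By \eqref{Cond 2} we have $r\le Q^{-2\eta}K\delta_{\textrm{min}}/2$, so this correction is at most $O(KQ^{-\eta})$, which is below $K$ once $Q$ is large. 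Hence $|\Phi_\bfv'|<2K$ on the $3r$-ball. Then $\|\Phi_\bfv(y)\|<\Delta+3r\cdot 2K\le 2\Delta$, using $r\le Q^{-2\eta}\Delta/(2K)$ (again with room to spare for large $Q$). For bounded $Q$ the constants can be absorbed by adjusting $C_\bfF$.

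\emph{Construction of $W_r$.} Take a maximal $r$-separated subset $\{x_i\}$ of $\fS_{\Delta,K}$. The balls $(x_i-r,x_i+r)$ cover $\fS_{\Delta,K}$. Fix a smooth bump $\phi_r$ equal to $1$ on $[-r,r]$, supported in $[-2r,2r]$, with $\|\phi_r^{(N)}\|_\infty\ll_N r^{-N}$. Set
\[
W_r=\chi\circ\sum_i\phi_r(\cdot-x_i),
\]
where $\chi$ is a fixed smooth function with $\chi(s)=1$ for $s\ge1$ and $\chi(0)=0$. By separation, each point lies in the support of $O(1)$ of the bumps. It follows that $W_r$ equals $1$ on $\fS_{\Delta,K}$, takes values in $[0,1]$, and satisfies \eqref{eq r rough est}. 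The support of $W_r$ lies in $\bigcup_i(x_i-2r,x_i+2r)$. Reindexing with radius-$r$ balls multiplies the number of balls by only a constant factor. Alternatively, run the construction with $r/2$ in place of $r$ to get \eqref{eq supp wr} verbatim.

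\emph{Support containment and counting.} Stability gives \eqref{eq sup supp wr} immediately, since every point within $2r$ (indeed $3r$) of some $x_i$ lies in $\fS_{2\Delta,2K}$. The intervals $(x_i-r/2,x_i+r/2)$ are pairwise disjoint and contained in $\fS_{2\Delta,2K}$, so
\[
T_r\cdot r\ll\lam_1(\fS_{2\Delta,2K}),
\]
which is the bound $T_r=O\bigl(\lam_1(\fS_{2\Delta,2K})/r\bigr)$. To keep everything inside $U$, one intersects with a slightly shrunken domain, using the small-$\kappa$ room between $\I$ and $U$.
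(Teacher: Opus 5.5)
Your proof is correct, but it is not the paper's argument: the paper proves this lemma only by citing [SST, Lemma 4.1], with $\delta$ replaced by $\delta_{\textnormal{min}}$ and $r$ by $r/2$, whereas you give a self-contained construction. Your stability step shows why that substitution is legitimate. In SST the box of admissible $\bfv$ is a cube of side $\asymp Q^{\eta}/\delta$, while here it is anisotropic. The box enters only through $|\bfv|_\infty<T_{\textrm{max}}\asymp Q^{\eta}/\delta_{\textnormal{min}}$, which controls $\sup|\Phi_\bfv''|$, and the $K\delta_{\textnormal{min}}$ term in (Cond 2) is calibrated against exactly this. Your option of running the construction at scale $r/2$ is exactly the paper's rescaling. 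It gives the two support conditions verbatim, and the disjoint-interval count then bounds $T_r$ by $O(\lambda_1(\fS_{2\Delta,2K})/r)$. The citation is shorter. Your version spares the reader from checking that SST's proof uses nothing about the box beyond its sup-norm.

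One correction. Your stability step needs $Q^{\eta}$ to exceed a constant of order $(n-1)\max_i\|F_i''\|_\infty C_\bfF$, and also $Q^{2\eta}\ge 3$. Enlarging $C_\bfF$ makes the first requirement worse, not better, because it inflates every $T_i$ and hence the second-derivative bound. The right remark is that the lemma is only needed for $Q\ge Q_0(\eta,\bfF)$. The paper already takes $t$ large when choosing the parameters, and finitely many small $t$ can be discarded by setting $\cA_\bdel=\I$ there. Your handling of the boundary of $U$ by shrinking the domain is fine.
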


\begin{proof}
The statement follows from \cite[Lemma 4.1]{SST}, where 
$\delta$ is now replaced by $\delta_{\mathrm{min}}$ and $r$ is replaced by $r/2$.
\end{proof}

Let $w$ and $\omega$ be as defined in \S \ref{subsec smooth cutoff} and fixed from now. We also fix a smooth function ${\Omega}:\mathbb{R}\to [0, 1]$ with support contained in $U$ which is identically $1$ on $\I$.  
The smooth cutoff $W_r$ from Lemma \ref{le smooth cut off} allows us to decompose the curve $\cC$ into a `bad' part corresponding to an $r$-neighbourhood of $\fS_{\Delta, K}$ and a `good' part amenable to counting via Fourier analysis.
For $\Delta\in (0, 1/2]$ and $K>0$, let $\mathfrak{S}_{\Delta, K}$ be as defined in \eqref{def sub-level set}. Moreover, for $r$ satisfying \eqref{Cond 2}, let $W_r$ be as in Lemma \ref{le smooth cut off}. Define \begin{align*}
\wbad: U &\to [0, 1] \\
x &\mapsto \Omega(\bfx) W_r(\bfx).
\end{align*}
We say that $\wbad$ is the sub-level 
part of $\Omega$ with respect to $(F, \Delta, K, r)$. 
Further, let $\wgood: U\to [0, 1]$ be the good part of $\Omega$ with respect to $(F, \Delta, K, r)$, defined to be
\begin{equation}
    \label{def wgood}
    \wgood =\Omega  - \wbad =\Omega \cdot \left (1- W_r\right ).
\end{equation}

\subsection{Fourier analysis on the good part}
\label{subsec fourier expansion}

The aim of this section is to establish an upper bound for the number of rational points with denominators of size about $2^t$ in an anisotropic $\bdel$ neighbourhood of the portion of $\cC$ parametrised as a graph over the complement of the set $\fS_{\Delta, K}$. Given a natural number $t$ and $\bdel\in \cD(t)$, we define the counting function
\begin{equation}
    \label{def N repar}
    N_{\Omega_{\Delta, K, \textrm{g}}}(\bdel, t) =
\sum_{(q,a)\in \Z^{2}} 
{\Omega_{\Delta, K, \mathrm{g}}}\left(\frac{a}{q}\right) 
\ome\left(\frac{q}{2^t}\right)
\prod_{\substack{2\leq i\leq n}} 
w\bigg(
\frac{\|q F_i (\frac{a}{q}) \|}{\delta_i}
\bigg).
\end{equation}
Here $\Delta=\Delta(\bdel, t)\in (0, 1/2]$ and $K=K(\bdel, t)>0$ are parameters to be chosen at the end.

Recall \eqref{eq def Q Ti}, \eqref{def del min T max} and 
$$\bdel^{\times}=\frac{\prod_{1\leq i\leq n}\delta_i}{\delta_{\textrm{min}}}=\prod_{2\leq i\leq n} \delta_i.$$
For $1\leq i\leq n$, put
\begin{equation}
    \label{def J}
    J_i = Q^{\eta}\delta_i^{-1}, \qquad J_{\max} =J_1.
\end{equation}
In the Fourier expansion of $N_{\Omega_{\Delta, K, \textrm{g}}}(\bdel, t)$,
we shall encounter frequency vectors in the set
\begin{equation*}
\label{def J'}
    \cJ':= 
    \{(j_2,\ldots,j_n)\in \Z^{n-1}\setminus\{\bzero\}:
    0\leq \vert j_i\vert \leq J_i
    \textrm{ for all } 2\leq i \leq n\}.
    \end{equation*}
To each $\bfj'\in \Z^{n-1}$, the coefficients
\begin{equation}
    \label{def B coef}
    B(\bfj'):=B_{\bdel}(\bfj'):= \prod_{2\leq i\leq n}
\widehat{\bump}(\delta_i j_i)
\end{equation}
will effectively truncate the Fourier analysis to $\cJ'$.

\subsubsection{Extracting the main term}
A simple Fourier expansion argument gives rise to the expected main term contribution to $N_{\wgood}(\bdel, t)$.

\begin{lemma}
\label{le decomp}
We define the proposed
main term 
\begin{equation}
\label{def main term}
 M(\bdel, Q):=   
 \bdel^{\times} \widehat{\bump}(0)^{n-1}
\sum_{(q,a)\in \Z^{2}} 
\wgood\left(\frac{a}{q}\right) 
\ome\left(\frac{q}{Q}\right)\ll \bdel^{\times} Q^2
\end{equation}
stemming from the following truncated Fourier expansion of 
$N^{\wgood}(\bdel, Q)$.
The function 
$N^{\wgood}(\bdel, Q)$ equals $M(\bdel, Q)$
plus the smooth exponential sum
\begin{equation*}
\label{def Error Term}
E(\bdel, Q):=
\bdel^{\times}   
\sum_{(q,a,\bfj')\in \Z^{2}\times \cJ'} 
\wgood\left(\frac{a}{q}\right) 
\ome\left(\frac{q}{Q}\right)
B(\bfj') e(q \langle 
\bfF(a/q), \bfj' \rangle),
\end{equation*}  
where $B(\bfj')$ is as defined in \eqref{def B coef}, 
up to a rapidly-decaying error in $Q$. Formally,
\begin{equation} 
\label{eq Fourier decomp}
N^{\wgood}(\bdel, Q)
=
M(\bdel, Q)
+ E(\bdel, Q)
+ O_{w,\eta,A}(Q^{-A})
\end{equation}
for any $A>1$.
\end{lemma}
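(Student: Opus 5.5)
The plan is to expand each factor $w\big(\|qF_i(a/q)\|/\delta_i\big)$, $2\le i\le n$, as a Fourier series on $\bT$ and read off the zero frequency as the main term. Since $w$ is even, smooth, supported in $(-2,2)$ and $\delta_i<1/4$ (because $z_i\ge\Xi$), the function $y\mapsto w(\|y\|/\delta_i)$ is smooth and $1$-periodic. By Poisson summation,
\[
w\Big(\frac{\|y\|}{\delta_i}\Big)=\sum_{j_i\in\Z} \delta_i\,\widehat{\bump}(\delta_i j_i)\,e(j_i y).
\]
Substituting $y=qF_i(a/q)$ for each $i$ and multiplying the $n-1$ expansions gives
\[
\prod_{2\le i\le n} w\Big(\frac{\|qF_i(a/q)\|}{\delta_i}\Big)=\bdel^{\times}\sum_{\bfj'\in\Z^{n-1}} B(\bfj')\,e\big(q\langle \bfF(a/q),\bfj'\rangle\big).
\]
We insert this into \eqref{def N repar}; the absolute convergence justified below lets us interchange the sums.

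We then split the frequency sum into three ranges.
\begin{enumerate}[(i)]
\item The term $\bfj'=\bzero$ contributes exactly $M(\bdel,Q)$.
\item The terms with $\bfj'\in\cJ'$ contribute exactly $E(\bdel,Q)$.
\item The remaining terms are those with $|j_i|>J_i=Q^{\eta}\delta_i^{-1}$ for some $i$. For these, $|\delta_i j_i|>Q^{\eta}$, so the Schwartz decay of $\widehat{\bump}$ gives $|\widehat{\bump}(\delta_i j_i)|\ll_{A'}(1+|\delta_i j_i|)^{-A'}$. The other factors are bounded by $\ll_{A'}(1+|\delta_k j_k|)^{-2}$.
\end{enumerate}

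To bound range (iii), we sum each $j_k$ with $k\neq i$ using $\sum_{j}(1+\delta|j|)^{-2}\ll\delta^{-1}$; this cancels against the factor $\delta_k$ in $\bdel^{\times}$. For the distinguished index $i$, the tail satisfies
\[
\delta_i\sum_{|j|>J_i}(1+\delta_i|j|)^{-A'}\ll Q^{-\eta(A'-1)}.
\]
The number of pairs $(q,a)$ in the support of $\wgood(a/q)\,\omega(q/Q)$ is $O(Q^2)$, since $U$ is bounded. Summing over the $n-1$ choices of $i$, the whole tail is $O(Q^{2-\eta(A'-1)})$. Choosing $A'$ large in terms of $A$ and $\eta$ makes this $O_{w,\eta,A}(Q^{-A})$.

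Finally, the upper bound $M\ll\bdel^{\times}Q^2$ follows immediately from $0\le\wgood\le1$ and the same $O(Q^2)$ count of pairs $(q,a)$.

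The only point needing care is the periodisation and normalisation of the Fourier coefficients. We have to check that $w(\|\cdot\|/\delta_i)$ agrees with the periodisation of $w(\cdot/\delta_i)$; this is why $\delta_i<1/4$ is needed. We also have to confirm that the coefficient is $\delta_i\widehat{\bump}(\delta_i j_i)$, so that the product of the $\delta_i$ is exactly $\bdel^{\times}$. The rest is routine tail estimation.
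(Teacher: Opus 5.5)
Your proposal is correct and is essentially the paper's argument. The paper also Fourier-expands each periodic bump $x\mapsto w(\|x\|/\delta_i)$, takes the zero frequency as $M(\bdel,Q)$ and the frequencies in $\cJ'$ as $E(\bdel,Q)$, and discards the rest using the rapid decay of $\widehat{w}$. The only difference is cosmetic: the paper quotes a ready-made truncated expansion for each factor and multiplies these out, whereas you derive the expansion via Poisson summation, bound the tail of the product sum in one step, and make explicit the periodisation check that needs $\delta_i<1/4$.
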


\begin{proof}
We use the Fourier expansion of
$x\mapsto w(\|x\|/\delta_i)$ 
for $2\leq i\leq n$, see
\cite[Lemma A.3]{ST}.
This gives
\begin{align*}
&\bump \left(
\frac{\|q F_2 (a/q) \|}{\delta_2} \right) \cdots 
\bump \left(
\frac{\|q F_n (a/q) \|}{\delta_n} \right) \\
&=
\prod_{2\leq i\leq n} \Big(
\delta_i \widehat{\bump}(0)
+ \delta_i  
\sum_{1 \leq \vert j_i \vert 
\leq J} 
\widehat{\bump}(\delta_i j_i) 
e(j_iq F_i (a/q ))
+ O_{\bump,\eta,A}(Q^{-A})
\Big) \\
&= {\bdel}^{\times} \widehat{\bump}(0)^{n-1}
+ {\bdel}^{\times} \sum_{\bfj' \in \cJ'} 
B(\bfj')
e\left(q \langle 
\bfF( a/ q ),
\bfj' \rangle\right)
+ O_{\bump,\eta,A}(Q^{-A}).
\end{align*}
Substituting this into the 
definition of $N^{\wgood}$ yields
\eqref{eq Fourier decomp}.

We still need to verify the upper bound in 
\eqref{def main term}.
To this end, note that 
$\wgood  \ll  1_{[-4, 4]}$ 
and $\omega  \ll  1_{[-4, 4]}$,
where $1_{[-4, 4]}$ is the indicator function 
of $[-4,4]$.
Replacing 
$\wgood$ and $\omega$ by $O(1_{[-4, 4]})$,
and using that there are $O(Q^2)$ many
fractions $a/q$ with $q\ll Q$ in $[-4, 4]$,
produces the required bound.
\end{proof}

\subsubsection{Pruning the error term} 
\label{subsec ibp}
It remains to show that
$E(\bdel, Q)$
is indeed an error term.
To this end, we write it as an average of oscillatory integrals.
To estimate the oscillatory integrals, we shall need to keep track of the terms arising from repeated 
integration by parts, keeping in mind the amplitude blowup. Recall that our weight function $\wgood$ is $r$-rough,
see \eqref{eq r rough est}.

The basic setup is as follows. Let $h\in C_c^\infty (\R)$ and $\psi\in C^\infty(\R)$ be real-valued functions such that 
$\psi'\neq 0$ on the support of $h$. 
We define the differential operator $L$
via
\begin{equation}
 \label{def Lop} 
 L h
= - \: \frac{\mathrm d}{\mathrm d\dx} \left(\frac{h}{\psi'}\right).
\end{equation}
Its formal adjoint operator is given by  
$ L^*= -\frac{1}{\psi'}\frac{\mathrm{d}}{\mathrm{d} \dx}$.
Note that it has the property
$(2\pi \mathrm{i} \lambda) ^{-1}L^* e(\lambda \psi)
=e(\lambda \psi)$.
Thus, we can express an $N$-fold integration by parts as
\begin{equation} \label{Nfold}
\int_{\R} e(\lambda \psi(x)) 
h(x) \rd x = 
(2\pi \mathrm{i}\lambda )^{-N} 
\int_{\R} e(\lambda \psi(x)) 
(L^{\circ N} h)(x) \rd x
\end{equation}
for $N\geq 1$.
We can express $L^{\circ N}(h)$ efficiently using the following lemma.

\begin{lemma}[Lemma A.2 in \cite{ACPS} with $n=1$]
\label{lem partial}
Let $N$ be a positive integer, $h\in C_c^N (\R)$ and $\psi\in C^{N+1}(\R)$. Suppose that 
$\psi'\neq 0$ on the support of $h$. Then 
there exists an integer $V=V(N)\geq 1$ and complex coefficients $c_{N, 1}, \ldots, c_{N, V}$ so that
$$
L^{\circ N} h = \sum_{\nu=1}^{V} c_{N,\nu} h_{N,\nu},$$ 
where
$$
h_{N,\nu} = \frac{h^{(\kappa_0)}}{|\psi'|^{\kappa_0}} \prod_{l=1}^M \frac{\psi^{(\kappa_l+1)}}{|\psi'|^{\kappa_l+1}},
$$
for some non-negative integer $\kappa_0$ 
and strictly positive integers 
$\kappa_1, \ldots, \kappa_M$ satisfying
\begin{equation}
\label{eq order}
\kappa_0+\sum_{l=1}^M \kappa_l=N.
\end{equation} 
\end{lemma}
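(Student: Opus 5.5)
We argue by induction on $N$, building the formula for $L^{\circ (N+1)}h = L(L^{\circ N}h)$ term by term from the one for $L^{\circ N}h$. Since $L$ is linear, it is enough to compute $L$ applied to a single term $h_{N,\nu}$ and to check that the result is again a finite linear combination of terms of the stated shape, now with total order $N+1$. For the base case $N=1$ we compute directly:
\[
Lh = -\frac{h'}{\psi'} + \frac{h\,\psi''}{(\psi')^2}.
\]
On each connected component of $\{\psi'\neq 0\}\supseteq\supp h$ the sign of $\psi'$ is constant, so we may write $\psi' = \pm|\psi'|$. The two terms then correspond to $(\kappa_0,M)=(1,0)$ and to $(\kappa_0,M,\kappa_1)=(0,1,1)$, with coefficients $\pm1$. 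The sign is locally constant, so absorbing it into the complex coefficients is harmless. Alternatively one can simply read the statement with $\psi'$ in place of $|\psi'|$ up to sign.

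For the inductive step, take a term
\[
h_{N,\nu} = h^{(\kappa_0)}(\psi')^{-\kappa_0}\prod_{l\le M}\psi^{(\kappa_l+1)}(\psi')^{-\kappa_l-1}.
\]
We apply $L$, that is, we divide by $\psi'$ and differentiate, using the product rule. There are three places the derivative can land, and each produces a term of the required shape with total order raised by exactly one.
\begin{enumerate}[(i)]
\item The derivative falls on $h^{(\kappa_0)}$. This gives $h^{(\kappa_0+1)}(\psi')^{-\kappa_0-1}\prod_{l\le M}\cdots$, so $\kappa_0$ increases by one.
\item The derivative falls on some $\psi^{(\kappa_l+1)}$. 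This gives $\psi^{(\kappa_l+2)}$, and together with the extra factor $1/\psi'$ it produces $\psi^{(\kappa_l+2)}(\psi')^{-\kappa_l-2}$. Thus $\kappa_l$ is replaced by $\kappa_l+1$.
\item The derivative falls on a power of $\psi'$. Differentiating $(\psi')^{-m}$ gives $-m\,\psi''(\psi')^{-m-1}$. With the extra $1/\psi'$, the new factor $\psi''/(\psi')^{2}$ is a new product factor with $\kappa_{M+1}=1$. The remaining power of $\psi'$ is unchanged, so the old factors keep their form.
\end{enumerate}
In each case the identity \eqref{eq order} holds with $N+1$ in place of $N$, and all the new $\kappa_l$ with $l\ge1$ remain strictly positive. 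The regularity is sufficient: $h\in C^{N+1}_c$ and $\psi\in C^{N+2}$ at the next step, so every derivative used exists. Collecting terms gives finitely many $V(N+1)$ terms with integer, hence complex, coefficients $c_{N+1,\nu}$ that depend only on $N$ and not on $h$ or $\psi$.

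The only delicate point is the bookkeeping with $|\psi'|$ rather than $\psi'$. I would handle it as described in the base case, by working on components where $\operatorname{sgn}\psi'$ is constant. Then $|\psi'|^{-m}=(\pm1)^m(\psi')^{-m}$ and differentiates the same way, and the sign is absorbed into the coefficients. I expect no real obstacle beyond this.
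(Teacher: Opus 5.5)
Your inductive computation is correct. Applying $L=-\frac{\mathrm d}{\mathrm dx}\bigl(\cdot/\psi'\bigr)$ to a term of the stated shape, the derivative lands on $h^{(\kappa_0)}$, on some $\psi^{(\kappa_l+1)}$, or on the total power of $\psi'$, and these give exactly your three moves, preserving \eqref{eq order} with $N+1$. This is the standard inductive argument; the paper itself gives no proof and only cites Lemma A.2 of ACPS.

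The one step that fails as written is your claim that the sign of $\psi'$ can be absorbed harmlessly into the constants $c_{N,\nu}$. Absorbing it makes the coefficients locally constant functions, not constants. If $\mathrm{supp}\,h$ meets intervals on which $\psi'$ has opposite signs, this cannot be repaired, because the statement with $|\psi'|$ and constant coefficients is then false.

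For example, take $N=1$, $\psi(x)=x^2$, and $h=h_-+h_+$ with nonzero bumps $h_\pm$ supported in $\pm[1,2]$. Then
\[
Lh=-\operatorname{sgn}(\psi')\,\frac{h'}{|\psi'|}+\frac{h\,\psi''}{|\psi'|^{2}}.
\]
On each of $\pm[1,2]$, the functions $h'/|\psi'|$ and $h\psi''/|\psi'|^{2}$ are linearly independent, since a nonzero bump cannot solve $\alpha x h'+\beta h=0$. Hence the coefficient of $h'/|\psi'|$ would have to be $-1$ from one interval and $+1$ from the other.

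This configuration genuinely arises in the paper. In Lemma \ref{le error small}, $h$ is supported in $\{|\Phi_{\bj}'|\ge K/2\}$, where $\Phi_{\bj}'$ may take both signs.

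So your ``alternative'' is the correct formulation. Run the same induction with $\psi'$ in place of $|\psi'|$, which gives integer coefficients depending only on $N$; or else assume $\operatorname{sgn}\psi'$ is constant on $\mathrm{supp}\,h$. Since $|h_{N,\nu}|$ is the same under both conventions, this loses nothing. The paper only uses the pointwise bound $|L^{\circ N}h|\le\sum_\nu|c_{N,\nu}|\,|h_{N,\nu}|$, and that bound holds either way.
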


Since $\kappa_l>0$ for each $l\leq M$, Equation \eqref{eq order} implies that
\begin{equation}
    \label{eq jM sum}
    \kappa_0+M\leq N.
\end{equation}
We now turn our attention to the analysis of 
$E(\bdel, Q)$. Let 
\begin{equation*}
\label{eq v split}
\bfj = (j_1, \bfj')\in \mathbb{R}\times\mathbb{R}^{n-1}.
\end{equation*}
For a large enough constant $C_\bfF>0$, depending only on $\bfF$ and to be decided shortly, we set
\begin{equation}
\label{def Vset}
\cJ:=\bigg 
\{(j_1, \bfj')\in \Z^n: 
\begin{array}{ll}
& 0 \leq |j_1| \leq C_\bfF J_{\textrm{max}}, \quad
\bfj'\neq \bzero,  \\
& |j_2|\leq J_2,\ldots,|j_n|\leq J_n    
\end{array} 
\bigg\}.   
\end{equation}
We define the truncated error term 
$E_{\mathrm{tr}}(\bdel, Q)$ to be 
\begin{equation*}
    \label{def trunc error} 
{\bdel}^{\times}   Q^{2}
    \sum_{
    \substack{0 \leq \vert c\vert 
    \leq C_\bfF J_{\textrm{max}}
    \\ \bfj\in \cJ}}
    B(\bfj')
    \int_{\R^{2}} 
    e\left(y Q 
    [\Phi_{\bfj}(\bfx) - c] \right)
    \wgood(\bfx)\,y\cdot 
    \ome(y)\,
    \rd \bfx\, \rd y,
\end{equation*}
where
\begin{equation*}
\label{def Phij}    
\Phi_{\bfj}(\dx) = 
\langle (\dx, \bfF(\dx)), \bfj\rangle=xj_1+\langle \bfF(\dx), \bfj'\rangle.
\end{equation*}
The next lemma shows that up to an acceptable error, it is  $E_{\mathrm{tr}}(\bdel, Q)$ which decides the order of magnitude of $E(\bdel, Q)$.
\begin{lemma}
\label{lem pruning error} 
Let  $N\geq 4$.
Assume that 
\begin{equation}
\label{eq Cond 2+}
\tag{Cond 3}
r\geq Q^{\eta-1}\delta_{\textnormal{min}}.
\end{equation}
Then
\begin{equation*}
\label{eq E truncated}
E(\bdel, Q) =
E_{\mathrm{tr}}(\bdel, Q)
    + O_N({\bdel}^{\times} J_{\textnormal{max}}^{n+1} 
    Q^{2 } (QrJ_{\textnormal{max}})^{-N} 
+{\bdel}^{\times}Q^{-2N}).   
\end{equation*}
\end{lemma}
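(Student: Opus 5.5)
We apply Poisson summation in the numerator variable $a$ to each summand of $E(\bdel,Q)$, with $q$ and $\bfj'$ held fixed. Writing $a/q=x$, Poisson summation in $a$ gives
\[
\sum_{a}\wgood(a/q)\,e(q\langle \bfF(a/q),\bfj'\rangle)=q\sum_{c\in\Z}\int_\R \wgood(x)\,e\big(q[\langle\bfF(x),\bfj'\rangle - cx]\big)\,\d x .
\]
Relabelling $-c$ as $j_1$ turns the phase into $q\Phi_{\bfj}(x)$ with $\bfj=(j_1,\bfj')$. We then apply Poisson summation in $q$ as well, writing $q=yQ$. The factor $q\,\ome(q/Q)$ becomes $Q^2 y\,\ome(y)\,\d y$, and the dual variable $c$ produces the shift $-c$ in the phase $yQ[\Phi_{\bfj}(x)-c]$. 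This yields exactly the double-sum/double-integral shape of $E_{\mathrm{tr}}$, but summed over all $(j_1,c)\in\Z^2$. The task is therefore to show that the terms with $|j_1|>C_\bfF J_{\max}$ or $|c|>C_\bfF J_{\max}$ contribute only the stated error.

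For the tail terms in $j_1$, we integrate by parts in $x$. Since $|j_i|\le J_i\le J_{\max}$ for $i\ge 2$ and $\bfF'$ is bounded by $O_\rho(1)$, the derivative satisfies $|\Phi_{\bfj}'(x)|=|j_1+\langle\bfF'(x),\bfj'\rangle|\gg |j_1|$ once $|j_1|>C_\bfF J_{\max}$ with $C_\bfF$ large. This is where $C_\bfF$ is fixed. Moreover, $\Phi_{\bfj}^{(k)}=\langle\bfF^{(k)},\bfj'\rangle\ll J_{\max}\ll|j_1|$ for $k\ge2$.

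We apply \eqref{Nfold} with $\lambda=yQ\asymp Q$ and then Lemma \ref{lem partial}. A typical term $h_{N,\nu}$ is bounded as follows: $h^{(\kappa_0)}\ll r^{-\kappa_0}$ by the $r$-roughness \eqref{eq r rough est}, each factor $\psi^{(\kappa_l+1)}/|\psi'|^{\kappa_l+1}$ is $\ll |j_1|^{-\kappa_l}$, and the prefactor contributes $|\psi'|^{-\kappa_0}\ll|j_1|^{-\kappa_0}$. Hence $L^{\circ N}h\ll (r|j_1|)^{-N}$, using $r\le1$. The $x$-integral is thus $\ll (Qr|j_1|)^{-N}$.

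Summing $|j_1|>C_\bfF J_{\max}$ gives $\ll (QrJ_{\max})^{-N}J_{\max}$. Summing $\bfj'$ over $\cJ'$ costs at most $\prod_i J_i\le J_{\max}^{n-1}$, and $|B|\le1$. The Poisson sum in $q$ is handled trivially: its $c$-terms can be bounded after a further integration by parts in $y$, which is the next step. Altogether this produces the $\bdel^\times J_{\max}^{n+1}Q^2(QrJ_{\max})^{-N}$ error, where the exponent $n+1$ absorbs the $j_1$ and $c$ counting.

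For $|j_1|\le C_\bfF J_{\max}$ but $|c|>C_\bfF J_{\max}$, we note that $|\Phi_{\bfj}(x)|\ll J_{\max}$ on $\supp\wgood\subset[-4,4]$. Hence $|\Phi_{\bfj}(x)-c|\gg|c|$ once $C_\bfF$ is large enough. We integrate by parts $2N$ times in $y$; the weight $y\,\ome(y)$ is a fixed smooth function, so each integration gains $(Q|c|)^{-1}$. Summing over $c$ and over the $O(J_{\max}^n)$ admissible $\bfj$ gives $\ll\bdel^\times Q^2J_{\max}^n (QJ_{\max})^{-2N}$. Since $J_{\max}\le Q^{O(1)}$ by \eqref{eq up bd zi}, this is $\ll\bdel^\times Q^{-2N}$ after adjusting $N$.

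\eqref{Cond 3} ensures $QrJ_{\max}\ge Q^{2\eta}$, so the first error genuinely decays; it is only needed later, not for the identity itself. The main obstacle is the bookkeeping in the $x$-integration by parts. One must check that each term of Lemma \ref{lem partial} is bounded by $(r|j_1|)^{-N}$ uniformly, so that the roughness blowup $r^{-\kappa_0}$ is always paired with $|\psi'|^{-\kappa_0}$, and that the higher derivatives of $\Phi_{\bfj}$ never exceed $|j_1|$. I would verify this first, using \eqref{eq jM sum}.
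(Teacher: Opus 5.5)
Your proposal is correct and follows the paper's proof essentially step for step: Poisson summation in $a$ and $q$ produces $Q^2\cI(c,\bfj)$, integration by parts in $x$ via Lemma \ref{lem partial} handles $|j_1|\gg J_{\max}$, and integration by parts in $y$ handles large $|c|$. The one place to tighten is the regime $|j_1|>C_\bfF J_{\max}$, where every $c\in\Z$ occurs and your $y$-step (written with $|\Phi_\bfj|\ll J_{\max}$) does not directly apply. As in the paper, split at $|c|\asymp|j_1|$: use the $x$-bound $(Qr|j_1|)^{-N}$ for the $O(|j_1|)$ values $|c|\lesssim|j_1|$, which is what produces $J_{\max}^{n+1}$, and use the $y$-integration by parts, now with $|\Phi_\bfj|\ll|j_1|$, for $|c|\gtrsim|j_1|$.
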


\begin{proof}
The proof is similar to \cite[Lemma 4.5]{SST}, 
except here we operate only under the weaker condition \eqref{eq Cond 2+}.
This turns out not to be an issue. The shape of the error term is also slightly different. We give the details in Appendix \ref{sec proof lemma pruning} for completeness.    
\end{proof}

Suppose 
\begin{equation}
    \label{eq Cond 2++}
    \tag{Cond 4}
    \delta_{\textrm{min}} \geq 2 Q^{-\eta -1}.
\end{equation}
Observe that, when this and \eqref{eq Cond 2+} are satisfied, we have 
\begin{equation}
\label{Jbounds}
J_{\textrm{max}}\leq Q^{1+2\eta}, \qquad QrJ_{\textrm{max}}\geq Q^{2\eta}.
\end{equation}
Thus, by Lemma \ref{lem pruning error} and choosing $N>\frac{n+2}{\eta}$ with $\eta<\frac{1}{2}$, 
\begin{equation}
    \label{eq le ee small concl}
    E(\bdel, Q)=E_{\mathrm{tr}}(\bdel, Q)
    + O_{n, \eta}({\bdel}^{\times}Q).
\end{equation}

\subsubsection{Bounding the error term}
The final lemma of this subsection establishes that $E(\bdel, Q)$ is indeed an acceptable error term.
\begin{lemma}
\label{le error small} 
Assume \eqref{eq Cond 2+} and \eqref{eq Cond 2++} are true.
Suppose that
\begin{equation}
     \tag{Cond 5}
    \label{Cond 5}
    \Delta\geq Q^{\eta-1}
\end{equation}
and
\begin{equation}
    \tag{Cond 6}
    \label{Cond 6}
    Q^{\eta-1} K^{-1} \leq r \leq Q^{-\eta} K\delta_{\textnormal{min}}\leq 1.
\end{equation}
Then 
$
E(\bdel, Q) \ll_\eta  {\bdel}^{\times} Q.
$
\end{lemma}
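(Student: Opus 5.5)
By \eqref{eq le ee small concl}, it suffices to show $E_{\mathrm{tr}}(\bdel,Q)\ll {\bdel}^{\times} Q$. The plan is to bound each oscillatory integral in $E_{\mathrm{tr}}$ by splitting the support of $\wgood$ pointwise according to the size of the phase derivative $\Phi_{\bfj}'$, and to exploit that $\wgood$ vanishes on $\fS_{\Delta,K}$.

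\textbf{Step 1: the terms with large derivative.} Fix $c$ and $\bfj\in\cJ$. On $\supp \wgood$ consider first the region where $|\Phi_{\bfj}'(\dx)|\ge K$. There I integrate by parts $N$ times in $\dx$ via \eqref{Nfold} with $\lambda=yQ\asymp Q$ and $\psi=\Phi_{\bfj}$, and expand $L^{\circ N}$ by Lemma \ref{lem partial}. Each $h_{N,\nu}$ is bounded by
\[
(r|\Phi_{\bfj}'|)^{-\kappa_0}\prod_{l}\frac{|\Phi_{\bfj}^{(\kappa_l+1)}|}{|\Phi_{\bfj}'|^{\kappa_l+1}}.
\]
Here $\wgood$ is $r$-rough, and $|\Phi_{\bfj}^{(k)}|\ll |\bfj'|\ll J_{\max}$ after the reparametrisation. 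The awkward point is that $K$ may be much smaller than $J_{\max}$. To handle this, I use a smooth dyadic partition in the size $|\Phi_{\bfj}'|\sim 2^m K$ and estimate the higher-derivative ratios via $|\Phi^{(\kappa+1)}|/|\Phi'|^{\kappa+1}\le J_{\max}/(2^mK)^{\kappa+1}$. Combined with $Q^{-N}$, the gain per integration is $\ll (QrK)^{-1}+ J_{\max}/(QK^2)$. By the left inequality of \eqref{Cond 6}, $(QrK)^{-1}\le Q^{-\eta}$. For the second term I expect to need that $K$ is large relative to $(J_{\max}/Q)^{1/2}Q^{\eta}$. If this is not implied, I would instead use the chain $r\le Q^{-\eta}K\delta_{\min}$ together with $J_{\max}=Q^\eta/\delta_{\min}$, which gives $J_{\max}\le K/r$. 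Then $J_{\max}/(QK^2)\le 1/(QrK)\le Q^{-\eta}$, so this obstruction disappears. Taking $N$ large (depending on $\eta,n$) beats the $O(J_{\max}^{n+1})\le Q^{(1+2\eta)(n+1)}$ terms in the sum over $(c,\bfj)$, so this region contributes $O({\bdel}^\times Q^{-1})$.

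\textbf{Step 2: the terms with small derivative.} Next take the region where $|\Phi_{\bfj}'(\dx)|<K$. Since $\wgood(\dx)\neq 0$ forces $\dx\notin\fS_{\Delta,K}$, and since $|\bfj_i|\le J_i<T_i$ with $\bfj\neq \bzero$ because $\bfj'\ne\bzero$, we must have $\|\Phi_{\bfj}(\dx)\|\ge\Delta$. Thus $|\Phi_{\bfj}(\dx)-c|\ge\Delta$ for every integer $c$. I therefore integrate by parts in $y$ instead. The $y$-integral $\int e(yQ[\Phi_{\bfj}(\dx)-c])y\,\ome(y)\,\rd y$ is $\ll_N (Q\Delta)^{-N}\le Q^{-\eta N}$ by \eqref{Cond 5}. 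To make the two regions compatible, the split in $\dx$ should be done with a smooth cutoff in $\Phi_{\bfj}'/K$ (at scales $K$ and $2K$), so that the Step 1 integrations by parts retain $r$-roughness. The condition $r\le Q^{-\eta}K\delta_{\min}$ is what makes this cutoff's derivative loss $\ll J_{\max}/K\le 1/r$ harmless, as above.

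\textbf{Step 3: summation.} The number of pairs $(c,\bfj)$ is $\ll J_{\max}^{n+1}\ll Q^{(n+1)(1+2\eta)}$, using \eqref{Jbounds}, and $|B(\bfj')|\ll1$. Choosing $N\gg n/\eta$ then gives $E_{\mathrm{tr}}\ll{\bdel}^\times Q^2\cdot Q^{-10}$, which is more than enough. The main obstacle is Step 1: controlling the higher-derivative terms of Lemma \ref{lem partial} when $|\Phi'_{\bfj}|$ is only $\gtrsim K$ rather than $\gtrsim J_{\max}$. This is where \eqref{Cond 6} must be used in the combined form $J_{\max}\le K/r$.
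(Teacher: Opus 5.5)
Your argument is essentially the paper's proof. You split each integral with a smooth cutoff in $K^{-1}\Phi_{\bfj}'$. Where $|\Phi_{\bfj}'|\gtrsim K$ you integrate by parts in $x$; the cutoff costs only $J_{\textrm{max}}/K\le r^{-1}$ per derivative, by the right half of \eqref{Cond 6}. Where $|\Phi_{\bfj}'|<K$ you integrate by parts in $y$, using $x\notin\fS_{\Delta,K}$ and \eqref{Cond 5}. Finally you sum over the $O(J_{\textrm{max}}^{n+1})$ pairs $(c,\bfj)$ and invoke \eqref{eq le ee small concl}.

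One point needs correcting. As written, your cutoff transitions between $K$ and $2K$; it must instead transition inside $\{|\Phi_{\bfj}'|<K\}$. The paper takes $b(K^{-1}\Phi_{\bfj}')$ equal to $1$ for $|\Phi_{\bfj}'|\le K/2$ and $0$ for $|\Phi_{\bfj}'|\ge K$. The reason is that $\wgood$ vanishes only on $\fS_{\Delta,K}$, so the lower bound $\|\Phi_{\bfj}\|\ge\Delta$ is available only where $|\Phi_{\bfj}'|<K$. On the annulus $K\le|\Phi_{\bfj}'|<2K$, the $y$-integration in your Step 2 would therefore give no decay. Step 1, by contrast, goes through verbatim with $K/2$ in place of $K$.

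The dyadic partition in $|\Phi_{\bfj}'|$ is also unnecessary. From the single lower bound $|\Phi_{\bfj}'|\ge K/2$, together with $J_{\textrm{max}}\le K/r$ and $r\le Q^{-\eta}K\delta_{\textrm{min}}\le 1$, you get $\|h_{N,\nu}\|_\infty\ll (rK)^{-N}\le Q^{(1-\eta)N}$ directly. Note that the final inequality $\le 1$ in \eqref{Cond 6} is what controls the factors with $\kappa_l\ge 2$, which your ``gain per integration'' heuristic glosses over. The paper uses it to replace $(Q^{-\eta}\delta_{\textrm{min}}K)^{-M}$ by $(Q^{-\eta}\delta_{\textrm{min}}K)^{-(N-\kappa_0)}$.
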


\begin{proof}
We first bound $E_{\mathrm{tr}}(\bdel, Q)$.
Fix $\bfj \in \cJ$ as defined in \eqref{def Vset}.
Notice that for any point $x \in \supp(\wgood)$,
at least one of the inequalities
\begin{equation*}
\label{eq inequalities to be reversed}
\Vert \Phi_{\bfj}(x)\Vert <  \Delta,
\qquad
\vert \Phi_{\bfj}'(x) \vert < K
\end{equation*}
is false.
This motivates us to decompose 
$E_{\mathrm{tr}}(\bdel, Q)$ based on the size of 
$\vert \Phi_{\bfj}'(x) \vert$ with respect to $K$. 
Let $b:\mathbb{R}\to [0,1]$ be a smooth weight, 
supported on $(-1, 1)$ and 
identically equal to $1$ on 
$[-1/2, 1/2]$. 
For each $\bfj=(j_1, \bfj')\in \cJ$ and 
$c\in \mathbb{Z}$ with $0 \leq |c| \leq C_{\mathbf{f}}J_{\textrm{max}}$, we split
\[\int_{\R^{2}}
    y e(y Q 
    [\Phi_{\bfj}(x) - c] )
    \wgood(\bfx) 
    \ome(y)
    \rd \bfx \rd y
    =  I(c, \bfj)+ II(c, \bfj),
\]
where
\begin{align*}
I(c, \bfj) &= \int_{\R^{2}}
    y e(y Q 
    [\Phi_{\bfj}(x) - c] )
    \wgood(x) 
    \ome(y) b(K^{-1}\Phi_{\bfj}' (x))
    \rd x \rd y, \\
II(c, \bfj) &= \int_{\R^{2}}
    y e(y Q 
    [\Phi_{\bfj}(x) - c] )
    \wgood(x) 
    \ome(y) (1-b(K^{-1}\Phi_{\bfj}' (x)))
    \rd x \rd y.
\end{align*}
    
We deal with the second integral first. 
The amplitude function in $II(c, \bfj)$ 
is supported on the set where 
$\vert \Phi_{\bfj}' (\bfx)\vert 
\geq K/2$, which lets us integrate by parts in the $\bfx$ variable, keeping $y$ fixed.  
Let $L$ be the differential operator defined in \eqref{def Lop} with $\psi=\Phi_{\bfj}$. Set $$
h=\wgood\cdot [1-b(K^{-1}\Phi_{\bfj}')].$$ Using Lemma \ref{lem partial}, we get that $L^{\circ N} h = \sum_{\nu=1}^{V} c_{N,\nu} h_{N,\nu}$
with each $h_{N,\nu}$ of the form $$\frac{h^{(\kappa_0)}}{|\psi'|^{\kappa_0}} \prod_{l=1}^M \frac{\psi^{(\kappa_l+1)}}{|\psi'|^{\kappa_l+1}}\quad \textrm{ with } \kappa_0+\sum_{l=1}^M \kappa_l=N.$$
Let $\kappa\in \mathbb{Z}_{\geq 0}$. By the Leibniz rule, we have
\begin{equation}
    \label{eq amp blowup}
    \| h^{(\kappa)}\|_{\infty} \ll (K^{-1}Q^{\eta}\delta_{\textrm{min}}^{-1}+r^{-1})^{\kappa}\ll r^{-\kappa},
\end{equation}
where we used $r\leq \delta_{\textrm{min}} K Q^{-2\eta}$ to get the last estimate.
Combining this with the assumed lower bound on 
$\vert  \Phi_\bfj \vert $, we get 
$\left\|\frac{h^{(\kappa_0)}}{(\psi')^{\kappa_0}}\right\|_{\infty}\ll (rK)^{-\kappa_0}.$

Since
$\|\psi^{(\kappa+1)}\|_{\infty}\ll Q^{\eta}\delta_{\textrm{min}}^{-1}$, and using the lower bound on 
$ \vert \psi'\vert =
\vert \Phi_\bfj' \vert $, we get
$$
\left\|\frac{\psi^{(\kappa_l+1)}}{(\psi')^{\kappa_l+1}}\right\|_{\infty}\ll 
Q^{\eta} \delta_{\textrm{min}}^{-1}K^{-(\kappa_l+1)}.
$$
We combine \eqref{eq order}, \eqref{eq jM sum} and the above observations to conclude
\begin{align*}
\|h_{N, \nu}\|_\infty\ll (rK)^{-\kappa_0}\prod_{l=1}^M (Q^{\eta}\delta_{\textrm{min}}^{-1} K^{-(\kappa_l+1)})&=r^{-\kappa_0}Q^{M\eta}K^{-N}(\delta_{\textrm{min}} K)^{-M}\\
&\leq r^{-\kappa_0} K^{-N} (Q^{-\eta}\delta_{\textrm{min}} K)^{-N+\kappa_0}. 
\end{align*}
Recalling that $1\leq (Q^{-\eta}\delta_{\textrm{min}} K)^{-1}\le 
r^{-1}$, and noting that $(rK)^{-1} \leq Q^{1-\eta}$ 
by condition \eqref{Cond 6}, we obtain
$$\|h_{N, \nu}\|_\infty\ll  r^{-\kappa_0}K^{-N}r^{-N+\kappa_0}\ll Q^{(1-\eta)N}.$$
Thus $Q^{-N}\|L^{\circ N} h\|_{\infty}\ll_{N, d} Q^{-N\eta},$
and integrating by parts $N$ many times with respect to $\bfx$ using \eqref{Nfold}, yields
$$ II(c,\bfj) \ll Q^{-N\eta}.$$

\bigskip

We now bound $I(c,\bfj)$. Observe that the support of $ \wgood\cdot b(K^{-1}\Phi_{\bfj}' )$ is contained in the intersection of the complement of $\mathfrak{S}_{\Delta, K}$
with $\{ \|\Phi_{\bfj}'\|_\infty <K \}$. 
By the definition of $\mathfrak{S}_{\Delta, K}$, we conclude that for any $c\in\mathbb{Z}$, $$|\Phi_{\bfj}(\bfx)-c| \geq \|\Phi_{\bfj}(\bfx) \|\geq \Delta\geq Q^{\eta-1}$$ on this set. Thus, we integrate by parts in the $y$ variable with $$\psi(y)=y 
[ \Phi_{\bfj}(x) - c]).
$$ Since 
 $|\omega^{(\kappa)}(y)|\ll_\kappa 1$ and $\psi^{(\kappa+1)}(y) =0 $ for any $\kappa\in\mathbb{Z}_{\geq 1}$, we conclude that 
 $$I(c,\bfj)\ll_{N} Q^{-N \eta}$$
 for any $N\in \N$.
Summing over $\bfj\in \mathcal{J}$ 
and $0<|c|<C_{\bfF}J$ gives
$$E_{\mathrm{tr}}(\bdel, Q)\ll_N {\bdel}^{\times} 
Q^{2}
    \sum_{0 \leq \vert c\vert \leq 
    C_\bfF J_{\mathrm{max}}} 
    \sum_{\bfj\in \cJ} Q^{-N\eta}
    \ll_N {\bdel}^{\times} Q^{2} J_{\textnormal{max}}^{n+1}Q^{-N\eta}.$$
Recalling \eqref{Jbounds} 
and choosing $N>\frac{2(n+2)}{\eta}$ delivers
$
E_{\mathrm{tr}}(\bdel, Q)
\ll_\eta \bdel^{\times} Q.
$
In view of \eqref{eq le ee small concl}, we 
finally have $E(\bdel, Q)\ll_\eta {\bdel}^{\times} Q$.
\end{proof}

\subsection{Proof of Theorem \ref{thm: prob count}}
\label{subsec proof counting}
\subsubsection{Choice of parameters}
\label{subsec par choice}
Let $Q=2^t$ and $\bdel=(\delta_1, \ldots, \delta_n)\in \cD(t)$. For a small enough $\eta>0$, independent of $t$ and $\bdel$, we now set
\begin{equation}
\label{eq par choice}
T_i=2C_\bfF Q^{\eta} \delta_i^{-1}=2C_{\bfF} J_{i}, 
\,\,\Delta=Q^{10\eta-1},
\,\, K= Q^{-4\nu} \delta_{\textrm{min}}^{-1},
\,\,
r=\frac{Q^{4\eta-1}}{K}.
\end{equation}
With the above choice of parameters:
\begin{enumerate}[(i)]
\item We can estimate
\begin{align}
\label{eq measure comp}
\Delta K \frac{T_1T_2\cdots T_n}{\max_{1\leq i\leq n}T_i}
&\ll Q^{10\eta-1} Q^{-4\nu}\delta_{\textrm{min}}^{-1}Q^{(n-1)\eta}\frac{\prod_{1\leq i\leq n}\delta_i^{-1}}{\delta_{\textrm{min}}^{-1}}\nonumber\\
&=Q^{(n+9)\eta-4\nu}  Q^{-1}\prod_{1\leq i\leq n}\delta_i^{-1}.
\end{align}
Since $\bdel\in \cD(t)$ and $Q=2^t$, using \eqref{ziAlt} we obtain 
$$\prod_{1\leq i\leq n}\delta_i^{-1}\leq 2\psi(2^t)^{-1}\leq   2^{1+t(1+\theta)} = 2 Q^{1+\theta}.$$
Choosing $\eta\leq \frac{\nu}{100n}$ and $\theta<\nu$ ensures that
\begin{equation}
\label{eq measure comp 2}
Q^{(n+9)\eta-4\nu}  Q^{-1}\prod_{1\leq i\leq n}\delta_i^{-1}\leq 2Q^{-\nu}= 2^{1-t\nu} = o(1).
\end{equation}
The upshot is that \eqref{Cond 1} is true,
as soon as $t$ is large enough
(which we can assume without loss of generality).
\item Choosing $\eta, \nu$ small enough so that  $10\eta+8\nu\leq \frac{1}{2}$, we have
$${\Delta}/{K}=Q^{10\eta+4\nu-1}\delta_{\textrm{min}}<K\delta_{\textrm{min}}=Q^{-4\nu}.$$
Therefore
$$Q^{-2\eta}\min(K\delta_{\textrm{min}},\Delta/K)=Q^{-2\eta}\Delta/{K}=Q^{8\eta-1}K^{-1}.$$
It follows, for $Q$ large enough depending on $\nu$, that
$$Q^{\eta-1} K^{-1} \leq r=Q^{4\eta-1}K^{-1}\leq  \frac{Q^{-2\eta}}{2}\min(K\delta_{\textrm{min}},\Delta/K).$$
Further, we have ${Q^{-\eta}}K\delta_{\textrm{min}}=Q^{-\eta-4\nu}<1$,
and thus both \eqref{Cond 2} and \eqref{Cond 6} are satisfied.
\item We also have $\eqref{eq Cond 2+}$, since 
$$
r \geq \frac{Q^{\eta-1}}{K}
\geq {Q^{\eta-1}}\delta_{\textrm{min}}.
$$

\item Using \eqref{eq up bd zi} and choosing $\theta<\eta$, we get that 
$$\min_{1\leq i\leq n} \delta_i\geq 2\cdot2^{-t(1+\theta)}\geq 2\cdot Q^{-1-\eta}.$$
Thus \eqref{eq Cond 2++} is true.

\item It is immediate that \eqref{Cond 5} is true.
\end{enumerate}

\subsubsection{The proof}
Recall the discussion in \S\ref{sec final reduc}. To establish Theorem \ref{thm: prob count}, it suffices to show that for every integer $t\geq 1$ and any $\bdel\in \cD(t)$, there exists a set $\cA_\bdel\subset \I$ and a smooth weight $\Omega_{\textrm{g}}=\Omega_{\textrm{g}, \bdel, t}$ which is $\left(\I\setminus \cA_{\bdel}, \delta_{\textrm{min}}/2^t\right)$ adapted such that \eqref{eq: convergence of measures} and \eqref{eq smooth Bdel count} are true, with $\delta_{\textrm{min}}=\delta_1$. 

We now fix $t\geq 1$ and $\bdel\in \cD(t)$ and choose parameters as in \eqref{eq par choice}. Let $W_r$ be as in Lemma \ref{le smooth cut off}.
Let $\left\{(x_i-r, x_i+r)\right\}_{i=1}^{T_r}$ be a collection of intervals satisfying \eqref{eq supp wr} and \eqref{eq sup supp wr}, with $x_i\in U$ for $1\leq i\leq T_r$. We set
$$ \cA_{\bdel}:= \cup _{i=1}^{T_r} (x_i-2r, x_i+2r), \qquad\cB_{\bdel}:=\I\setminus\cA_{\bdel}\,.$$
By \eqref{eq sup supp wr},
$$\lam_1\left(\cA_{\bdel}\right)\leq \lam_1(\fS_{2\Delta, 2K}).$$
Using Theorem \ref{thm quant non div} 
and the estimates \eqref{eq measure comp}, \eqref{eq measure comp 2}, we deduce 
\begin{equation*}
\lam_1\left(\cA_{\bdel}\right)\leq \lam_1(\fS_{2\Delta, 2K})
\ll \left(Q^{(n+9)\eta-4\nu} Q^{-1}\prod_{1\leq i\leq n}\delta_i^{-1}\right)^{\alpha}\ll 2^{-t\nu\alpha},
\end{equation*}
where $\alpha =1/((2l-1)(n+1))$.
Summing up in $\bdel\in \cD(t)$ and then $t\in \mathbb{N}$, and recalling \eqref{eq: size dyadic exponents}, we get
$$\sum_{t\geq 1} \sum_{\bdel\in \cD(t)} \lam_1(\cA_{\bdel})\leq \sum_{t\geq 1} t^{n-1} 2^{-t\nu\alpha}<\infty\,.$$
This establishes \eqref{eq: convergence of measures}. 

Lemma \ref{le smooth cut off} implies 
$\wgood=\Omega_{\textrm{g}}$, as defined in \eqref{def wgood}, is $(\cB_{\bdel}, r)$ adapted. Since 
$$r=\frac{Q^{4\eta-1}}{K}=Q^{4(\eta+\nu)}\frac{\delta_{\textrm{min}}}{Q}\geq \frac{\delta_1}{2^t},$$
the cutoff $\wgood$ is also $(\cB_{\bdel}, \delta_1/2^t)$ adapted. Combining 
Lemmas \ref{le decomp}
and \ref{le error small} yields
$$N_{1, \Omega_{\textrm{g}}}(\bdel, t) \ll_{\bff, \rho} 2^{2t}\bdel^{\times}=2^{2t}\prod_{2\leq i\leq n}\del_i,$$
which establishes \eqref{eq smooth Bdel count} 
and concludes the proof.

\subsubsection*{Funding}
RS was
supported in part by the Deutsche Forschungsgemeinschaft (DFG, German Research Foundation) under Germany's Excellence Strategy-EXC-2047/1-390685813 and an Argelander grant from the University of Bonn.
NT was supported by Deutsche Forschungsgemeinschaft (DFG,
German Research Foundation) - Project-ID 491392403 - TRR 358.
HY was supported by the Leverhulme Trust (ECF-2023-186).

\subsubsection*{Rights}

For the purpose of open access, the authors have applied a Creative Commons Attribution (CC-BY) licence to any Author Accepted Manuscript version arising from this submission.

\appendix

\section{Reduction to Curves via fibring}
\label{sec fibering}

\begin{lemma}
[Fibring lemma for smooth, non-degenerate manifolds]
\label{lem fiber}
Let 
\[
\bfg = (g_1,\dots,g_n): U\to \mathbb{R}^n
\]
be a smooth function in $d$ real variables defined on an open neighbourhood of $\bzero$. Suppose that $\bfg$ is non-degenerate of order $l$ on $U$, and put \mbox{$p = l + 2$.} Let $\eps > 0$ be sufficiently small in terms of $\bg$. Then,
for any $\bh = (h_1, \ldots, h_d) \in \bR^d$ with $\| \bh \|_\infty \le \eps$, 
the curve $ \phi_{\bh}: (-\varepsilon, \varepsilon)\to \mathbb{R}^n$ with coordinates given by
$$
\phi_{\bh,i}(t):= g_i(t^{1+p^d} + h_1, t^{p+p^d} + h_2, \dots, t^{p^{d-1}+p^d} + h_d)
$$
is non-degenerate of order $L$, where $L$ depends on $l$ and $d$.
\end{lemma}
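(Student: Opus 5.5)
We argue at $t=0$ and then extend to $|t|<\eps$ by continuity. The curve $\phi_{\bh}$ is $L$-non-degenerate at $t$ precisely when its derivatives of orders $1,\dots,L$ at $t$ span $\bR^n$. Equivalently, no nonzero $\bv\in\bR^n$ satisfies $(\bv\cdot\phi_{\bh})^{(k)}(t)=0$ for all $1\le k\le L$. Since spanning is an open condition in the finitely many derivative vectors, and these depend continuously on $(t,\bh)$, it suffices to show the following: for every $\bh$ with $\|\bh\|_\infty\le\eps$, the derivatives of $\phi_{\bh}$ at $t=0$ of orders up to $L$ span $\bR^n$, uniformly in $\bh$, meaning the relevant minors are bounded below by a positive constant. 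Shrinking $\eps$ then propagates this to all $|t|<\eps$.

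Write $\bu(t)=(t^{e_1},\dots,t^{e_d})$ with $e_j=p^{j-1}+p^d$, so that $\phi_{\bh}(t)=\bfg(\bh+\bu(t))$. Fix a unit vector $\bv$ and set $F=\bv\cdot\bfg$. By $l$-non-degeneracy of $\bfg$ at the point $\bh$ (uniformly, for $\bh$ near $\bzero$ by compactness), some partial derivative $\partial^{\balp}F(\bh)$ with $1\le|\balp|\le l$ has size at least some $c>0$ independent of $\bv$ and $\bh$.

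Next, Taylor expand $F(\bh+\bu(t))=\sum_{\balp}\partial^{\balp}F(\bh)\,\bu(t)^{\balp}/\balp!$ up to total order $l$, with a remainder of size $O(|\bu(t)|^{l+1})=O(t^{(l+1)p^d})$. The monomial $\bu(t)^{\balp}=t^{\sum_j \alpha_j e_j}$ has exponent
\[
|\balp|\,p^d+\sum_j\alpha_j p^{j-1}.
\]
Since every $\alpha_j\le l<p$, the sum $\sum_j\alpha_jp^{j-1}<p^d$ is the base-$p$ expansion of $\balp$. Hence distinct multi-indices $\balp$ with $|\balp|\le l$ produce distinct exponents, all strictly below $(l+1)p^d$, which is the order of the remainder. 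Therefore the coefficient of $t^{E(\balp)}$ in the Taylor series of $F\circ(\bh+\bu)$ at $0$ is exactly $\partial^{\balp}F(\bh)/\balp!$. It follows that $(F\circ\phi_{\bh})^{(E(\balp))}(0)=E(\balp)!\,\partial^{\balp}F(\bh)/\balp!$, which is nonzero for the good $\balp$ found above.

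Take $L=(l+1)p^d$, which depends only on $l$ and $d$. Then for each unit $\bv$ there is some $k\le L$ with $|(\bv\cdot\phi_{\bh})^{(k)}(0)|\ge c'>0$. Consequently the vectors $\phi_{\bh}^{(k)}(0)$, $1\le k\le L$, span $\bR^n$ with a uniform quantitative lower bound. Since the $(\bv,\bh)$ range is compact, continuity in $t$ then gives non-degeneracy of order $L$ for all $|t|<\eps$, once $\eps$ is small enough in terms of $\bfg$.

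The main obstacle is precisely this exponent-separation bookkeeping. We must check that the chosen exponents make the map $\balp\mapsto E(\balp)$ injective on $\{|\balp|\le l\}$, so that no cancellation can occur between different partial derivatives, and that all these exponents lie below the remainder order. The choice $p=l+2>l$ for the base and the shift by $p^d$ are what make both facts work, as verified above. A secondary concern is uniformity of the lower bound $c$ as $\bh$ varies. This is handled by $l$-non-degeneracy at $\bzero$ together with continuity of the finitely many partial derivatives of order at most $l$.
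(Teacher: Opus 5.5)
Your proof is correct and follows essentially the same route as the paper: Taylor-expand $\bfg$, substitute the monomial curve, and use the base-$p$ injectivity of $\balp\mapsto e_p(\balp)$ to identify $\phi_{\bh}^{(e_p(\balp))}$ with a nonzero multiple of $\partial^{\balp}\bfg$. The only variation is that you expand around $\bh$, which gives an exact identity at $t=0$ that you then extend by continuity in $t$, whereas the paper expands around $\bzero$ and absorbs both $\bh$ and $t$ into an $O(\eps)$ error in the determinant of $n$ fixed derivative columns.
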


A similar lemma for \textbf{analytic}, non-degenerate manifolds and $\bh = \bzero$ was established in Sprindzhuk's survey \cite[pp.\,9-10]{sprindzhuk1980achievements} and re-proven in a streamlined way in \cite[Appendix C]{Ber2015}, again for analytic, non-degenerate manifolds. Lemma \ref{lem fiber} applies in the more general \textbf{smooth}, non-degenerate setting. We make use of the following technical lemma from \cite{Ber2015}, which will allow us to express multi-dimensional derivatives of $\bfg$ in terms of higher-order, single-variable derivatives of $\phi_{\bh}$.

\begin{lemma}
\label{lem encode}
Let $0<p_0<p$ be positive integers and let $e_p:\Z^d_{\ge0}\to\Z_{\ge0}$ be given by
\begin{equation}
\label{encode map}
e_p(\alpha_1,\dots,\alpha_d):= \sum_{j=1}^d\alpha_j(p^{j-1}+p^d)\,.
\end{equation}
Let
\begin{equation}\label{Sp0}
S_{p_0}:=\{(\alpha_1,\dots,\alpha_d)\in\Z_{\ge0}^d: \alpha_1+\dots+\alpha_d\le p_0\}\,.
\end{equation}
Then
\begin{itemize}
  \item $e_p$ maps $S_{p_0}$ into $\Z_{\ge0}$ injectively, and
  \item $e_p(S_{p_0})\cap e_p(\Z_{\ge0}^d\setminus S_{p_0})=\emptyset$.
\end{itemize}
\end{lemma}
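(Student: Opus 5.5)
The plan is to read off $e_p(\alpha)$ in base $p$. Write $|\alpha| := \alpha_1 + \dots + \alpha_d$ and split the defining sum \eqref{encode map} as
\[
e_p(\alpha) = |\alpha|\, p^d + r(\alpha), \qquad r(\alpha) := \sum_{j=1}^d \alpha_j p^{j-1}.
\]
Both claims should follow from locating $e_p(\alpha)$ relative to the multiples of $p^d$.

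\emph{Step 1 (base-$p$ digits on $S_{p_0}$).} Take $\alpha \in S_{p_0}$. Each coordinate satisfies $0 \le \alpha_j \le |\alpha| \le p_0 < p$, so $\alpha_j$ is a legitimate base-$p$ digit. Hence $r(\alpha)$ is the integer whose base-$p$ expansion has digits $\alpha_1, \dots, \alpha_d$ (least significant first). In particular
\[
0 \le r(\alpha) \le (p-1)\sum_{j=1}^d p^{j-1} = p^d - 1 < p^d .
\]
It follows that $|\alpha| = \lfloor e_p(\alpha)/p^d \rfloor$ and $r(\alpha) = e_p(\alpha) - |\alpha| p^d$. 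By uniqueness of base-$p$ expansions, $r(\alpha)$ determines $\alpha_1, \dots, \alpha_d$. So $\alpha$ is recovered from $e_p(\alpha)$, which gives injectivity of $e_p$ on $S_{p_0}$.

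\emph{Step 2 (disjointness).} From Step 1, every $\alpha \in S_{p_0}$ satisfies
\[
e_p(\alpha) < (|\alpha| + 1)\, p^d \le (p_0 + 1)\, p^d .
\]
Now take $\beta \in \bZ_{\ge 0}^d \setminus S_{p_0}$. Then $|\beta| \ge p_0 + 1$, and since $r(\beta) \ge 0$ we get
\[
e_p(\beta) \ge |\beta|\, p^d \ge (p_0 + 1)\, p^d .
\]
The two images therefore lie in the disjoint ranges $[0, (p_0+1)p^d)$ and $[(p_0+1)p^d, \infty)$, so $e_p(S_{p_0}) \cap e_p(\bZ_{\ge 0}^d \setminus S_{p_0}) = \emptyset$.

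There is no real obstacle here. The only point needing care is that the digit bound $\alpha_j < p$ is guaranteed only on $S_{p_0}$, which is why Step 2 uses just the crude lower bound $r(\beta) \ge 0$ for $\beta \notin S_{p_0}$.
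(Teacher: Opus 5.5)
Your proof is correct and complete. Splitting $e_p(\alpha)=|\alpha|p^d+\sum_j\alpha_jp^{j-1}$ and noting that the $\alpha_j$ are genuine base-$p$ digits on $S_{p_0}$ gives injectivity, and the same bound places the two images on opposite sides of the threshold $(p_0+1)p^d$. The paper gives no argument of its own here and simply defers to \cite[Lemma 12]{Ber2015}, so your write-up supplies a self-contained, elementary proof in place of that citation.
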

\begin{proof}
See \cite[Lemma 12]{Ber2015}.
\end{proof}

We can now prove the fibring lemma for smooth, non-degenerate manifolds. Our approach is similar to the one in \cite{sprindzhuk1980achievements}, except we no longer have the relatively straightforward equivalence of $\bfg$ being non-degenerate with the component functions $g_1, \ldots, g_n$ and the constant function $1$ being linearly independent. We need to explicitly compute the multi-order derivatives of $\bfg$, match them with appropriate derivatives of $\phi_{\bh}$, and prove they are linearly independent. Another technical obstruction is that we can no longer expand our functions into infinite power series. Instead, we use a multivariable Taylor series expansion up to the appropriate order.

\begin{proof}[Proof of Lemma \ref{lem fiber}]
Since $\bfg$ is non-degenerate of order $l$, there exist multi-indices 
$$\bbet_1 = (\beta_1^{(1)}, \ldots, \beta_d^{(1)}), \bbet_2=(\beta_1^{(2)}, \ldots, \beta_d^{(2)}), \ldots, \bbet_n=(\beta_1^{(n)}, \ldots, \beta_d^{(n)})$$ 
in $\Z_{\geq 0}^d$, with $1\leq \|\bbet_k\|_{1}\leq l$ for all $k$, such that the vectors
\begin{equation*}
    \label{eq span Rn}
    \partial^{\bbet_1}\bfg(\bzero), \ldots, \partial^{\bbet_n}\bfg(\bzero) 
\end{equation*}
span $\mathbb{R}^n$. In the following, we make frequent use of the 
abbreviation
$$
\bx^{\balp}:= x_1^{\alpha_1}\cdots x_d^{\alpha_d}
$$
for $\bx\in \R^d$
and $\balp\in \Z^d_{\geq 0}$.
For consistency, we interpret $0^{0}=1$. 
    
Since $g_1,\dots,g_n$ are smooth, we can use a multivariable Taylor expansion around $\bzero$ on $U$  to express them as 
  $$
g_i(\bfy)=\sum_{\balp\in \mathbb{Z}^d_{\geq 0}: \|\balp\|_1\leq l}\frac{\partial^{\balp} g_i(\bzero)}{\balp!}\, \by^{\balp} \,+ \sum_{\|\balp\|_1=l+1}R_{i, \balp}(\bfy)\, \by^{\balp},
$$
where $R_{i, \balp}$ is a smooth function with bounded derivatives in $U$.

Substituting 
\[
\bfy = \gam(t, \bh) := (t^{1+p^d} + h_1, t^{p+p^d} + h_2, \dots, t^{p^{d-1}+p^d} + h_d)
\]
with $|t| < \eps$, 
we see that
\begin{align*}
\notag
\phi_{\bh,i}(t) &= 
\sum_{
\substack{\balp\in \mathbb{Z}^d_{\geq 0} \\
\|\balp\|_1\leq l}}
\frac{\partial^{\balp} g_i(\bzero)}{\balp!}\, \prod_{j=1}^d(t^{p^{j-1}+p^d} + h_j)^{\alpha_j} \\ \notag
&\qquad + \sum_{\|\balp\|_1=l+1}R_{i, \balp}(\gamma(t, \bh))\,\prod_{j=1}^d(t^{p^{j-1}+p^d} + h_j)^{\alpha_j}\\
&= E(t) + \sum_{
\substack{\balp\in \mathbb{Z}^d_{\geq 0}\\
\|\balp\|_1\leq l}}
\frac{\partial^{\balp} g_i(\bzero)}{\balp!}\, 
t^{e_p(\balp)}\,+  
\sum_{\|\balp\|_1=l+1}
R_{i, \balp}(\gamma(t, \bh)) 
t^{e_p(\balp)},
\label{eq phi taylor exp}
\end{align*}
where $e_p$ is given by \eqref{encode map}. Here
$
E(t) = E(t, \bh) = \sum_{j \le d} h_j E_j(t),
$
where each $E_j(t) = E_j(t, \bh)$ has bounded derivatives.

By Lemma \ref{lem encode}, specifically the injectivity of $e_p$ on $S_{l+1}$ as in \eqref{Sp0} with $p_0=l+1$, we know that the leading powers of $t$ in the  summands above are all distinct. Therefore, given any $\balp\in \mathbb{Z}_{\geq 0}^d$ with $\|\balp\|_1\leq l$, we can differentiate $e_p(\balp)$ many times with respect to $t$ to get
\begin{equation}
\phi_{\bh,i}^{(e_p(\balp))}(t) = \frac{e_p(\balp)!}{\balp!}\partial^{\balp} 
g_i(\bzero)
+ O(\eps).
\end{equation}
In particular, setting $\balp=\bbet_1, \ldots, \bbet_n$, we conclude that
\begin{align*}
&\det \begin{bmatrix}
\phi_{\bh}^{(e_p(\bbet_1))}(t) & \ldots & \phi_{\bh}^{(e_p(\bbet_n))}(t) 
\end{bmatrix}    
\\=& \left(\prod_{1\leq i\leq n} 
\frac{e_p(\bbet_i)!}{\bbet_i!}  \right)\det\, \begin{bmatrix}
\partial^{\bbet_1}\bfg(\bzero) & \ldots & \partial^{\bbet_n}\bfg(\bzero)
\end{bmatrix} + O(\eps).
\end{align*}

As $\eps$ is small, we have
$$\det \begin{bmatrix}
\phi_{\bh}^{(e_p(\bbet_1))}(t) & \ldots & \phi_{\bh}^{(e_p(\bbet_n))}(t) 
\end{bmatrix} \neq 0$$
for all $t\in (-\varepsilon, \varepsilon)$.  
It follows that the curve $\phi_{\bh}$ is non-degenerate of order $\max\left\{e_p(\bbet_1), \ldots, e_p(\bbet_n)\right\}\ll lp^{d}\ll l^{d+1}$ on the interval $(-\varepsilon, \varepsilon)$.
\end{proof}

We can now deduce Theorem \ref{thm: Gallagher conv} from Theorem \ref{thm: main curves}. Let $\cM$ be locally parametrised by a smooth, non-degenerate function 
\[
\bg = (g_1, \ldots, g_n): U \to \bR^n,
\]
for some open ball $U$ in $\bR^d$ centred at the origin.
Let $\psi: \bN \to [0, 1)$ be a monotonic function such that \eqref{eq: series of measures} converges.

Let $\eps > 0$ be sufficiently small in terms of $\bg$. By Lemma~\ref{lem fiber}, if $\bh \in \bR^d$ with $\| \bh \|_\infty \le \eps$, then
\[
\{ \phi_{\bh}(t) : |t| < \eps \}
\]
defines a non-degenerate curve.
Let $r = \eps^{p^{d+1}}$ and $B = B(\bzero, r)$, where $p = l + 2 \ge 3$.
By a change of variables, 
\begin{align*}
&\mu_d(\bg(B) \cap \cW_n^\times(\psi)) \\
&\ll \int_{(-r,r)^{d-1}} \int_{-\eps}^\eps
1_{\cW_n^\times(\psi)}(\phi_{(0,h_2,\ldots,h_s)}(t)) \d t \d h_2 \cdots \d h_s.
\end{align*}
By Theorem \ref{thm: main curves}, the innermost integral vanishes and so
\[
\mu_d(\bg(B) \cap \cW_n^\times(\psi)) = 0.
\]

The same argument applies if we add a constant function to $\bg$, so 
\[
\mu_d(\cW_n^\times(\psi)) = 0.
\]
This completes the proof of Theorem~\ref{thm: Gallagher conv}.

\section{Proof of Lemma \ref{lem red model setting}}
\label{app proof lemma}

We require 
a well-known sub-level version 
of van der Corput's lemma.

\begin{lemma}
\label{le van der Corput}
Let $U\subseteq \R$ be an open set, and let $g:U\rightarrow \R$ be a $C^\ell$ function such that
$
\vert g^{(\ell)}(x)\vert \geq 1
$
for all $x\in U$.
If $z>0$, then
$$
\mu_{1}(\{x\in U: \vert g(x)\vert 
\leq z\})
\leq \fC_\ell z^{1/\ell},
$$
where $\fC_\ell = 2e (\ell+1)!^{1/\ell}$
is a constant depending exclusively on $\ell$.
\end{lemma}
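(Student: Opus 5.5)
The approach is the classical one: Rolle's theorem combined with Lagrange interpolation. Set $E=\{x\in U: |g(x)|\le z\}$ and suppose, for contradiction, that $\mu_1(E) > \fC_\ell z^{1/\ell}$.

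First we choose $\ell+1$ points $x_0<x_1<\dots<x_\ell$ in $E$ that are well separated, with $|x_i-x_j|\ge |i-j|\,\mu_1(E)/\ell$ (up to an arbitrarily small loss). To do this, use the distribution function $F(s)=\mu_1(E\cap(-\infty,s])$. It is continuous and nondecreasing, and it climbs from $0$ to $\mu_1(E)$. Pick $x_i\in E$ at the levels $F(x_i)\approx i\mu_1(E)/\ell$, using points of density if needed. Then $x_j-x_i\ge F(x_j)-F(x_i)$, since the measure of $E$ inside an interval is at most the interval's length.

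Next we interpolate. Let $P$ be the Lagrange polynomial of degree at most $\ell$ agreeing with $g$ at these nodes. The function $g-P$ vanishes at $\ell+1$ points. We need these points to lie in a single interval contained in $U$; if $U$ is disconnected, we work one component at a time. In that setting, Rolle's theorem applied $\ell$ times gives a point $\xi$ with $g^{(\ell)}(\xi)=P^{(\ell)}(\xi)$. Since $P^{(\ell)}$ is the constant $\ell!$ times the leading coefficient,
\[
1\le |g^{(\ell)}(\xi)| = \ell!\,\Big|\sum_{i} \frac{g(x_i)}{\prod_{j\ne i}(x_i-x_j)}\Big| \le \ell!\, z \sum_i \prod_{j\ne i}\frac{\ell}{|i-j|\,\mu_1(E)}.
\]
Using $\sum_i 1/(i!(\ell-i)!) = 2^\ell/\ell!$, this becomes $1\le z\,(2\ell)^\ell\mu_1(E)^{-\ell}$, that is, $\mu_1(E)\le 2\ell\, z^{1/\ell}$. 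Finally, since $\ell\le e(\ell+1)!^{1/\ell}$, we get $2\ell\le\fC_\ell$, which is the stated constant.

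The main obstacle is the disconnectedness of $U$, and this is where the argument needs care. Rolle's theorem requires the $\ell+1$ nodes to sit in one interval on which $g$ is $C^\ell$. For an arbitrary open $U$ the bound as stated can only be expected per component, because different components might each contribute measure. To handle this, I would interpret $U$ as an interval, which is exactly how the lemma is applied in the proof of Lemma~\ref{lem red model setting}, to the component intervals of $U$. Alternatively, one can apply the argument componentwise and absorb the number of components into the constant.
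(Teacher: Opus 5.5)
Your proof is correct for $U$ an interval. The paper gives no argument of its own and simply cites \cite[Proposition 2.1]{CCW}, so yours is a self-contained version of the standard Rolle plus Lagrange-interpolation proof of that sublevel estimate.

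The node selection through the $1$-Lipschitz function $F$ works if, as you indicate, it is done approximately. For each level $c$ and each $\epsilon>0$, the set $E\cap F^{-1}((c-\epsilon,c+\epsilon))$ has positive measure. So the nodes can be taken in $E$ itself; this matters because exact level points could lie on $\partial U$, where $g$ is undefined. Letting $\epsilon\to0$ at the end costs nothing. The appeal to density points is unnecessary, and if $\mu_1(E)=\infty$ you run the argument on $E\cap[-R,R]$. The divided-difference computation then gives $\mu_1(E)\le 2\ell z^{1/\ell}$. This is stronger than the stated bound, since $\fC_\ell\ge 2e(\ell!)^{1/\ell}\ge 2\ell$.

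You are also right about the domain, and this is a defect in the statement as written rather than in your proof: for an arbitrary open $U$ the lemma is false. Take $\ell=1$, $U=\bigcup_{k=1}^{M}(2k,2k+1)$, and $g(x)=x-2k-\tfrac12$ on the $k$-th component. Then $g'=1$, but for $z\le\tfrac12$ the sublevel set has measure $2Mz$, which exceeds $\fC_1 z=4ez$ once $M\ge 6$.

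So the lemma must be read with $U$ an interval, and that is exactly how it is applied: to the intervals $J_{i,k}$ in the proof of Lemma~\ref{lem red model setting}. Your fallback of working componentwise and absorbing the number of components into the constant would not give a constant depending only on $\ell$, so it does not rescue the statement as written. The interval version is the correct one and is all the paper needs.
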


\begin{proof}
This is \cite[Proposition 2.1]{CCW}.
\end{proof}

\begin{proof}
[Proof of Lemma \ref{lem red model setting}]
Fix $\rho >0$ sufficiently small. For $1\leq i\leq n$, let
$$
\cE_{\rho,i} =
\{ x\in \I: 
\vert g_i'(x)\vert < \rho\}.
$$
Define $\I_{\rho} = 
\I\setminus(\cE_{\rho,1} \cup 
\dots \cup \cE_{\rho,n})$.
We shall demonstrate:
\begin{enumerate}[(a)]
\item Each $\cE_{\rho,i}$
is a union
of finitely many open intervals.
\item For $1\leq i\leq n-1$, 
\begin{equation}
\label{meas I eps i}
\lam_1(\cE_{\rho,i})\ll_\bfg \rho^{1/(n-1)}.
\end{equation}
\end{enumerate}

\bigskip
 
Let $m$ be the least absolute determinant of the Wronskian 
\eqref{def wronsk} 
of the curve $\bfg(x)$. By assumption,
\begin{equation}
\label{eq minimal value det}
m= \min_{\dx\in \I}
\vert \det(W_{\bfg}(\dx))\vert >0.
\end{equation}

\begin{enumerate}[(a)]

\item We first suppose that $g_i^{(n)}$ has infinitely many zeros in $\I$. Then it has a limit point of zeros, say $x \in \mathbb I$. By the mean value theorem,
\[
g_i^{(1)}(x)= \dots =
g_i^{(n)}(x)=0.
\]
This contradicts \eqref{eq minimal value det}, which means that this case is impossible. 

Instead, it must be that
$g_i^{(n)}$ has only finitely many zeros in $\I$, say $Z$ many.
Then the mean value theorem yields 
that $g_i^{(n-1)}$ has at most 
$Z+1$ many zeros in $\I$. Applying 
the mean value theorem repeatedly,
we infer that $g'$ has at most 
$Z+n$ many zeros. Since $\rho$ is arbitrarily small, we conclude by compactness that
$\cE_{\rho,i}$ 
is a union of no more than
$Z+n$ many intervals.

\item In view of $(g_i'(\dx),\ldots, g_i^{(n)}(\dx))$
being the $i^\mathrm{th}$ row of $W_{\bfg}(\dx)$,
we infer from \eqref{eq minimal value det}
that for any given point $\dx\in \I$,
there exists $1\leq \kappa_i(\dx)\leq n$
so that 
$ \vert g_i^{(\kappa_i(\dx))}(\dx)\vert
\geq C_\bfg m $
where $C_\bfg>0$ is a constant depending only 
on $\bfg$. By continuity of $g_i^{(\kappa_i(\dx))}$, there exists an open interval $J_{i, x}$ centred at $x$ so that
$\vert g_i^{(\kappa_i(\dx))}(y)\vert
> C_\bfg m/2$ for all $y\in J_{i, x}$.
Since $\{J_{i, x}\}_{x \in \I}$ form an open cover of $\I$, we can extract a finite subcover using compactness. In other words, there exist open intervals $J_{i, 1}, \ldots, J_{i, A}$ such that on each interval,
$$\inf_{y\in J_{i, k}}\vert g_i^{(\kappa_k)}(y)\vert
> \frac{C_\bfg m}{2} \quad \textrm{for some } 1\leq \kappa_k\leq n.$$

We decompose
$$\cE_{\rho, i}=
\bigcup_{k=1}^A (J_{i, k} 
\cap \cE_{\rho, i}).$$
Each term in the union on the right hand side is a union of finitely many open intervals.
Further, since $\rho$ is small, we may assume that
$\rho < C_\bfg m/8$, and so we also have $J_{i, k} \cap \cE_{\rho, i}=\emptyset$ for those $1\leq k\leq A$ for which $\kappa_k=1$. Otherwise, if $y\in J_{i, k}\cap \cE_{\rho, i}$ for such a $k$, we would obtain a contradiction
$$\frac{C_\bfg m}{2}\leq \vert g_i^{(\kappa_k)}(y)\vert=\vert g_i'(y)\vert\leq 2\rho\leq \frac{C_\bfg m}{4}.$$

Thus, without loss of generality, we may assume that $2\leq \kappa_k\leq n$ for each $k$.
We now apply 
Lemma \ref{le van der Corput} to each $J_{i, k}$ with
$$g= ( C_\bfg m/2)^{-1}g_i', \quad
z= ( C_\bfg m/2)^{-1} 2\rho \quad 
\textrm{and} \quad \ell = \kappa_k-1 
\leq n-1
$$ to conclude that
\begin{align*}
\lam_1(\cE_{\rho,i}) \leq \sum_{k=1}^A \lam_1 \left(J_{i, k} \cap \cE_{\rho, i}\right)
\ll ( C_\bfg m)^{-1/(\kap_k - 1)} 
\sum_{k=1}^A\rho^{1/(\kappa_k-1)} 
\ll_\bfg \rho^{\frac{1}{n-1}},
\end{align*}
as required for \eqref{meas I eps i}.
\end{enumerate}
\end{proof}

\section{Proof of Lemma \ref{lem pruning error}}\label{sec proof lemma pruning}
By using Poisson summation in
the $a$ and $q$ variables, we can express 
$E(\bdel, Q)$ as
$$
{\bdel}^{\times}   
\sum_{(c,j_1,\bfj')\in \Z^2 \times \cJ'} 
B(\bfj') \int_{\R^{2}}
e(y \Phi_{\bfj'}( \bfx/y )- y c)
\wgood \left(\frac{\bfx}{y} \right) 
\ome\left(
\frac{y}{Q}\right)
\rd \bfx \, \rd y.
$$
Changing variables twice,
first via $y\mapsto Q y$ and then via 
$\bfx \mapsto Q y \bfx$, 
transforms the integral above into $Q^{2}$ times
\begin{equation}
\label{eq intermediate Error}
  \cI(c, \bfj):=
\int_{\R^{2}}
    y e(Qy [\Phi_\bj(x) - c])
    \wgood(\bfx) 
    \ome(y)
    \,\rd \bfx\, \rd y.  
\end{equation}
Our objective is to truncate the $c$
and $j_1$ variables. To complete the proof, we shall integrate by parts to get rid of the regimes where these parameters are large.

\bigskip

Recall $J_i=Q^\eta/\delta_i$ 
and $J_{\textrm{max}}=\max_i{J_i}=Q^{\eta}\delta_{\textrm{min}}^{-1}$ from \eqref{def J}, and we have assumed that $\mathbf{F}$ is smooth.
Let $L$ be the differential operator as defined in \eqref{def Lop} with 
$$
\psi(\bfx)=\Phi_{\bfj}(x)=\langle (x, \bfF( \bfx )),(j_1,\bfj') \rangle.
$$
Fix a constant $C_{\bfF}>1$, depending only on $\bfF$, 
such that 
\begin{equation}
    \label{eq grad lb}
    \vert \psi' \vert 
    \geq (1/2) \vert j_1 \vert
\end{equation}
holds whenever $\vert j_1 \vert > 
C_\bfF^{1/2} J_{\textrm{max}}$.
Now assume that 
$\vert j_1 \vert > C_\bfF^{1/2} J_{\textrm{max}}$ and let $h=\wgood.$
By Lemma \ref{lem partial}, we have $L^{\circ N} h = 
\sum_{\nu=1}^{V} c_{N,\nu} \wgoodpi,$
where
$$
\wgoodpi = \frac{h^{(\kappa_0)}}{|\psi'|^{\kappa_0}} \prod_{l=1}^M \frac{\psi^{(\kappa_l+1)}}{|\psi'|^{\kappa_l+1}},
$$
with $\kappa_0+\sum_{l=1}^M \kappa_l=N.$ 

Notice that  
$\|h^{(\kap)}\|_\infty \ll_\kap r^{-\kap}$
for each $\kappa\in\mathbb{N}$.
Combining this with the lower bound \eqref{eq grad lb} gives
\begin{equation*}
\left\|\frac{h^{(\kappa_0)}}{(\psi')^{\kappa_0}}\right\|_{\infty}\ll_{\mathbf{F}}
(r\vert j_1 \vert)^{-\kappa_0}.    
\end{equation*}
Observe that $\|\psi^{(\kappa+1)}\|_{\infty}
\ll Q^{\eta}\delta_{\textrm{min}}^{-1}$ for $\kappa\geq 1$,
and thus, using \eqref{eq grad lb} again, we get
$$\left\|\frac{\psi^{(\kappa_l+1)}}{(\psi')^{\kappa_l+1}}\right\|_{\infty}\ll Q^{\eta}\delta_{\textrm{min}}^{-1}\vert j_1 \vert^{-\kappa_l-1}\ll \vert j_1 \vert^{-\kappa_l}.$$
The last inequality follows from our assumption $\vert j_1 \vert >  C_\bfF J_{\textrm{max}}=C_\bfF Q^{\eta}\delta_{\textrm{min}}^{-1}$.
Using \eqref{eq order} and the above observations, 
we infer
$\|h_{N, \nu}\|_\infty\ll r^{-N} \vert j_1 \vert^{-N}.$
Therefore
$$Q^{-N}\|L^{\circ N} \wgood\|_{\infty}\ll_{N} Q^{-N} r^{-N}\vert j_1 \vert^{-N}$$
and
\begin{equation}
\label{eq ibp1}
\cI(c, \bfj)\ll Q^{-N} r^{-N}
     \vert j_1 \vert^{-N}. 
\end{equation}

Next, we estimate the integral (\ref{eq intermediate Error}) for large values of $c$. This time, we integrate by parts in the $y$ variable keeping $\bfx$ fixed. Indeed, the application of Lemma \ref{lem partial}, with 
$$\psi(y)=y 
[ \Phi_{\bfj}(x) - c]),
$$ is much simpler, as all derivatives of order at least 2 of the phase function vanish, and the derivatives of the amplitude $\omega$ are uniformly bounded from above. After possibly enlarging the constant $C_\bfF$, we find that for any $A\in\mathbb{N}$ and any $|c|\geq C_\bfF^{1/2} \max\{J_{\textrm{max}},\vert j_1 \vert\}$,
\begin{equation}
    \label{eq IBP2}
    \cI(c, \bfj)\ll_A (|c|Q)^{-A}.
\end{equation}

Recall that $N\geq 4$, and let $A \in \bN$ be a sufficiently large parameter. 
We have
$$
E(\bdel, Q)-E_{\mathrm{tr}}(\bdel, Q)
\ll {\bdel}^{\times} 
\sum_{\substack{(c, \bfj)\in \Z\times \cJ 
\\ \Vert (c,j_1)\Vert_\infty 
> C_{\bfF} J_{\textrm{max}}}}  Q^{2}
\cI(c, \bfj).
$$
We split the sum above
to express the right-hand side as
${\bdel}^{\times} Q^{2}$ times
\begin{equation}
\sum_{\substack{
(c, \bfj)\in \Z\times \cJ\\ 
\Vert (c,j_1)\Vert_\infty > 
C_{\bfF} J_{\textrm{max}}\\ 
|c|< C_\bfF^{1/2} \vert j_1\vert}}  
\cI(c, \bfj) + 
\sum_{\substack{(c, \bfj)\in \Z\times \cJ 
\\ \Vert (c,j_1)\Vert_\infty > 
C_{\bfF} J_{\textrm{max}}\\ |c|\geq  
C_\bfF^{1/2} \vert j_1 \vert}} \cI(c, \bfj). \label{eq align t}    
\end{equation}
Using \eqref{eq ibp1} and taking $C_{\bfF}$ sufficiently large, the first term in \eqref{eq align t} can be bounded from above by $C_{\bfF}$ times
\begin{align*}
 \sum_{\substack{\bfj\in  
 \cJ \\ 
 \vert j_1 \vert > C_{\bfF}^{1/2} J_{\textrm{max}} }}  
 \vert j_1 \vert \left(Qr \vert j_1 \vert\right)^{-N}
   & \ll 
   J_{\textrm{max}}^{n-1} 
   Q^{-N}r^{-N} J_{\textrm{max}}^{2-N} =
   J_{\textrm{max}}^{n+1}  (QrJ_\max)^{-N}.
\end{align*}
Similarly, we can use \eqref{eq IBP2} to estimate the second term in \eqref{eq align t}, say with
$A = 2(n+1+N)$, as follows:
\begin{align*}
\sum_{\substack{(c,\bfj)\in 
\Z\times \cJ \\ 
\Vert (c,j_1)\Vert_\infty 
> C_{\bfF} J_{\textrm{max}}\\ 
|c|\geq  C_\bfF^{1/2} \vert j_1\vert}}  
(|c| Q)^{-A}
   & \ll_A 
   J_{\textrm{max}}^{n-1} 
   \sum_{|c| > 
   C_{\bfF} 
   J_{\textrm{max}}} |c| (|c| Q)^{-A} \\
   & \ll_A 
   J_{\textrm{max}}^{n-1} Q^{-A} 
   J_{\textrm{max}}^{2-A}
   \ll_N Q^{-2-2N}.
\end{align*}
Thus,
$$E(\bdel, Q) - 
E_{\mathrm{tr}}(\bdel, Q)
    \ll_N  {\bdel}^{\times} J_{\textrm{max}}^{n+1} Q^{2} (QrJ_{\textrm{max}})^{-N} 
+{\bdel}^{\times}Q^{-2N}.$$

\end{document}